\newtheorem{theorem}{Theorem}[section]
\newtheorem{prop}[theorem]{Proposition}
\newtheorem{lemma}[theorem]{Lemma}
\newtheorem{coro}[theorem]{Corollary}
\theoremstyle{definition}
\newtheorem{defn}[theorem]{Definition}
\newtheorem{exam}[theorem]{Example}
\newtheorem{prop-def}[theorem]{Proposition-Definition}
\newtheorem{coro-def}[theorem]{Corollary-Definition}
\newcommand{\nc}{\newcommand}
\nc{\tred}[1]{\textcolor{red}{#1}}
\nc{\tblue}[1]{\textcolor{blue}{#1}}
\nc{\tgreen}[1]{\textcolor{green}{#1}}
\nc{\tpurple}[1]{\textcolor{purple}{#1}}
\nc{\btred}[1]{\textcolor{red}{\bf #1}}
\nc{\btblue}[1]{\textcolor{blue}{\bf #1}}
\nc{\btgreen}[1]{\textcolor{green}{\bf #1}}
\nc{\btpurple}[1]{\textcolor{purple}{\bf #1}}
\nc{\NN}{{\mathbb N}}
\nc{\ncsha}{{\mbox{\cyr X}^{\mathrm NC}}} \nc{\ncshao}{{\mbox{\cyr
X}^{\mathrm NC}_0}}
\nc{\vsa}{\vspace{-.1cm}} \nc{\vsb}{\vspace{-.2cm}}
\nc{\vsc}{\vspace{-.3cm}} \nc{\vsd}{\vspace{-.4cm}}
\nc{\vse}{\vspace{-.5cm}}
\renewcommand{\textbf}[1]{}
\newcommand{\delete}[1]{}
\nc{\mlabel}[1]{\label{#1}}  
\nc{\mcite}[1]{\cite{#1}}  
\nc{\mref}[1]{\ref{#1}}  
\nc{\meqref}[1]{\eqref{#1}}  
\nc{\mbibitem}[1]{\bibitem{#1}} 
\nc{\mlabel}[1]{\label{#1}  
{\hfill \hspace{1cm}{\tt{{\ }\hfill(#1)}}}}
\nc{\mcite}[1]{\cite{#1}{{\tt{{\ }(#1)}}}}  
\nc{\mref}[1]{\ref{#1}{{\tt{{\ }(#1)}}}}  
\nc{\meqref}[1]{\eqref{#1}{{\tt{{\ }(#1)}}}}  
\nc{\mbibitem}[1]{\bibitem[\bf #1]{#1}} 
\nc{\opa}{\ast} \nc{\opb}{\odot} \nc{\op}{\bullet} \nc{\pa}{\frakL}
\nc{\arr}{\rightarrow} \nc{\lu}[1]{(#1)} \nc{\mult}{\mrm{mult}}
\nc{\diff}{\mathfrak{Diff}}
\nc{\opc}{\sharp}\nc{\opd}{\natural}
\nc{\ope}{\circ}
\nc{\dpt}{\mathrm{d}}
\nc{\tforall}{\text{ for all }}
\nc{\diam}{alternating\xspace}
\nc{\Diam}{Alternating\xspace}
\nc{\cdiam}{alternating\xspace}
\nc{\Cdiam}{Alternating\xspace}
\nc{\AW}{\mathcal{A}}
\nc{\rba}{Rota-Baxter algebra\xspace}
\nc{\ari}{\mathrm{ar}}
\nc{\lef}{\mathrm{lef}}
\nc{\Sh}{\mathrm{ST}}
\nc{\Cr}{\mathrm{Cr}}
\nc{\st}{{Schr\"oder tree}\xspace}
\nc{\sts}{{Schr\"oder trees}\xspace}
\nc{\vertset}{\Omega} 
\nc{\assop}{\quad \begin{picture}(5,5)(0,0)
\line(-1,1){10}
\put(-2.2,-2.2){$\bullet$}
\line(0,-1){10}\line(1,1){10}
\end{picture} \quad \smallskip}
\nc{\operator}{\begin{picture}(5,5)(0,0)
\line(0,-1){6}
\put(-2.6,-1.8){$\bullet$}
\line(0,1){9}
\end{picture}}
\nc{\idx}{\begin{picture}(6,6)(-3,-3)
\put(0,0){\line(0,1){6}}
\put(0,0){\line(0,-1){6}}
 \end{picture}}
\nc{\pb}{{\mathrm{pb}}}
\nc{\Lf}{{\mathrm{Lf}}}
\nc{\lft}{{left tree}\xspace}
\nc{\lfts}{{left trees}\xspace}
\nc{\fat}{{fundamental averaging tree}\xspace}
\nc{\fats}{{fundamental averaging trees}\xspace}
\nc{\avt}{\mathrm{Avt}}
\nc{\rass}{{\mathit{RAss}}}
\nc{\aass}{{\mathit{AAss}}}
\nc{\vin}{{\mathrm Vin}}    
\nc{\lin}{{\mathrm Lin}}    
\nc{\inv}{\mathrm{I}n}
\nc{\gensp}{V} 
\nc{\genbas}{\mathcal{V}} 
\nc{\bvp}{V_P}     
\nc{\gop}{{\,\omega\,}}     
\nc{\bin}[2]{ (_{\stackrel{\scs{#1}}{\scs{#2}}})}  
\nc{\binc}[2]{ \left (\!\! \begin{array}{c} \scs{#1}\\
    \scs{#2} \end{array}\!\! \right )}  
\nc{\bincc}[2]{  \left ( {\scs{#1} \atop
    \vspace{-1cm}\scs{#2}} \right )}  
\nc{\bs}{\bar{S}} \nc{\cosum}{\sqsubset} \nc{\la}{\longrightarrow}
\nc{\rar}{\rightarrow} \nc{\dar}{\downarrow} \nc{\dprod}{**}
\nc{\dap}[1]{\downarrow \rlap{$\scriptstyle{#1}$}}
\nc{\md}{\mathrm{dth}} \nc{\uap}[1]{\uparrow
\rlap{$\scriptstyle{#1}$}} \nc{\defeq}{\stackrel{\rm def}{=}}
\nc{\disp}[1]{\displaystyle{#1}} \nc{\dotcup}{\
\displaystyle{\bigcup^\bullet}\ } \nc{\gzeta}{\bar{\zeta}}
\nc{\hcm}{\ \hat{,}\ } \nc{\hts}{\hat{\otimes}}
\nc{\barot}{{\otimes}} \nc{\free}[1]{\bar{#1}}
\nc{\uni}[1]{\tilde{#1}} \nc{\hcirc}{\hat{\circ}} \nc{\lleft}{[}
\nc{\lright}{]} \nc{\lc}{\lfloor} \nc{\rc}{\rfloor}
\nc{\curlyl}{\left \{ \begin{array}{c} {} \\ {} \end{array}
    \right .  \!\!\!\!\!\!\!}
\nc{\curlyr}{ \!\!\!\!\!\!\!
    \left . \begin{array}{c} {} \\ {} \end{array}
    \right \} }
\nc{\longmid}{\left | \begin{array}{c} {} \\ {} \end{array}
    \right . \!\!\!\!\!\!\!}
\nc{\onetree}{\bullet} \nc{\ora}[1]{\stackrel{#1}{\rar}}
\nc{\ola}[1]{\stackrel{#1}{\la}}
\nc{\ot}{\otimes} \nc{\mot}{{{\boxtimes\,}}}
\nc{\otm}{\overline{\boxtimes}} \nc{\sprod}{\bullet}
\nc{\scs}[1]{\scriptstyle{#1}} \nc{\mrm}[1]{{\rm #1}}
\nc{\margin}[1]{\marginpar{\rm #1}}   
\nc{\dirlim}{\displaystyle{\lim_{\longrightarrow}}\,}
\nc{\invlim}{\displaystyle{\lim_{\longleftarrow}}\,}
\nc{\mvp}{\vspace{0.3cm}} \nc{\tk}{^{(k)}} \nc{\tp}{^\prime}
\nc{\ttp}{^{\prime\prime}} \nc{\svp}{\vspace{2cm}}
\nc{\vp}{\vspace{8cm}} \nc{\proofbegin}{\noindent{\bf Proof: }}
\nc{\proofend}{$\blacksquare$ \vspace{0.3cm}}
\nc{\modg}[1]{\!<\!\!{#1}\!\!>}
\nc{\intg}[1]{F_C(#1)} \nc{\lmodg}{\!
<\!\!} \nc{\rmodg}{\!\!>\!}
\nc{\cpi}{\widehat{\Pi}}
\nc{\sha}{{\mbox{\cyr X}}}  
\nc{\shap}{{\mbox{\cyrs X}}} 
\nc{\shan}{{\overrightarrow \sha}}
\nc{\shpr}{\diamond}    
\nc{\shp}{\ast} \nc{\shplus}{\shpr^+}
\nc{\shprc}{\shpr_c}    
\nc{\msh}{\ast} \nc{\zprod}{m_0} \nc{\oprod}{m_1}
\nc{\vep}{\varepsilon} \nc{\labs}{\mid\!} \nc{\rabs}{\!\mid}
\nc{\sqmon}[1]{\langle #1\rangle}
\nc{\mmbox}[1]{\mbox{\ #1\ }} \nc{\dep}{\mrm{dep}} \nc{\fp}{\mrm{FP}}
\nc{\rchar}{\mrm{char}} \nc{\End}{\mrm{End}} \nc{\Fil}{\mrm{Fil}}
\nc{\Mor}{Mor\xspace} \nc{\gmzvs}{gMZV\xspace}
\nc{\gmzv}{gMZV\xspace} \nc{\mzv}{MZV\xspace}
\nc{\mzvs}{MZVs\xspace} \nc{\Hom}{\mrm{Hom}} \nc{\id}{\mrm{id}}
\nc{\im}{\mrm{im}} \nc{\incl}{\mrm{incl}} \nc{\map}{\mrm{Map}}
\nc{\mchar}{\rm char} \nc{\nz}{\rm NZ} \nc{\supp}{\mathrm Supp}
\nc{\Alg}{\mathbf{Alg}} \nc{\Bax}{\mathbf{Bax}} \nc{\bff}{\mathbf f}
\nc{\bfk}{{\bf k}} \nc{\bfone}{{\bf 1}} \nc{\bfx}{\mathbf x}
\nc{\bfy}{\mathbf y}
\nc{\base}[1]{\bfone^{\otimes ({#1}+1)}} 
\nc{\Cat}{\mathbf{Cat}}
\nc{\detail}{\marginpar{\bf More detail}
    \noindent{\bf Need more detail!}
    \svp}
\nc{\Int}{\mathbf{Int}} \nc{\Mon}{\mathbf{Mon}}
\nc{\rbtm}{{shuffle }} \nc{\rbto}{{Rota-Baxter }}
\nc{\remarks}{\noindent{\bf Remarks: }} \nc{\Rings}{\mathbf{Rings}}
\nc{\Sets}{\mathbf{Sets}} \nc{\wtot}{\widetilde{\odot}}
\nc{\wast}{\widetilde{\ast}} \nc{\bodot}{\bar{\odot}}
\nc{\bast}{\bar{\ast}} \nc{\hodot}[1]{\odot^{#1}}
\nc{\hast}[1]{\ast^{#1}} \nc{\mal}{\mathcal{O}}
\nc{\tet}{\tilde{\ast}} \nc{\teot}{\tilde{\odot}}
\nc{\oex}{\overline{x}} \nc{\oey}{\overline{y}}
\nc{\oez}{\overline{z}} \nc{\oef}{\overline{f}}
\nc{\oea}{\overline{a}} \nc{\oeb}{\overline{b}}
\nc{\weast}[1]{\widetilde{\ast}^{#1}}
\nc{\weodot}[1]{\widetilde{\odot}^{#1}} \nc{\hstar}[1]{\star^{#1}}
\nc{\lae}{\langle} \nc{\rae}{\rangle}
\nc{\lf}{\lfloor}
\nc{\rf}{\rfloor}
\nc{\QQ}{{\mathbb Q}}
\nc{\RR}{{\mathbb R}} \nc{\ZZ}{{\mathbb Z}}
\nc{\CC}{{\mathbb C}}
\nc{\cala}{{\mathcal A}} \nc{\calb}{{\mathcal B}}
\nc{\calc}{{\mathcal C}}
\nc{\cald}{{\mathcal D}} \nc{\cale}{{\mathcal E}}
\nc{\calf}{{\mathcal F}} \nc{\calg}{{\mathcal G}}
\nc{\calh}{{\mathcal H}} \nc{\cali}{{\mathcal I}}
\nc{\call}{{\mathcal L}} \nc{\calm}{{\mathcal M}}
\nc{\caln}{{\mathcal N}}\nc{\calo}{{\mathcal O}}
\nc{\calp}{{\mathcal P}} \nc{\calr}{{\mathcal R}}
\nc{\cals}{{\mathcal S}} \nc{\calt}{{\mathcal T}}
\nc{\calu}{{\mathcal U}} \nc{\calw}{{\mathcal W}} \nc{\calk}{{\mathcal K}}
\nc{\calx}{{\mathcal X}} \nc{\CA}{\mathcal{A}}
\nc{\fraka}{{\mathfrak a}} \nc{\frakA}{{\mathfrak A}}
\nc{\frakb}{{\mathfrak b}} \nc{\frakB}{{\mathfrak B}}
\nc{\frakD}{{\mathfrak D}} \nc{\frakF}{\mathfrak{F}}
\nc{\frakf}{{\mathfrak f}} \nc{\frakg}{{\mathfrak g}}
\nc{\frakH}{{\mathfrak H}} \nc{\frakL}{{\mathfrak L}}
\nc{\frakM}{{\mathfrak M}} \nc{\bfrakM}{\overline{\frakM}}
\nc{\frakm}{{\mathfrak m}} \nc{\frakP}{{\mathfrak P}}
\nc{\frakN}{{\mathfrak N}} \nc{\frakp}{{\mathfrak p}}
\nc{\frakS}{{\mathfrak S}} \nc{\frakT}{\mathfrak{T}}
\nc{\frakX}{{\mathfrak X}} \nc{\frakx}{\mathfrak{x}}
\nc{\frakc}{{\mathfrak c}}
\nc{\frakd}{{\mathfrak d}}
\nc{\BS}{\mathbb{S}}
\font\cyr=wncyr10 \font\cyrs=wncyr7
\nc{\li}[1]{\textcolor{red}{#1}}
\nc{\lir}[1]{\textcolor{red}{Li:#1}}
\nc{\sz}[1]{\textcolor{blue}{SZ: #1}}
\nc{\qhz}[1]{\textcolor{orange}{QHZ: #1}}
\nc{\ID}{\mathfrak{I}} \nc{\lbar}[1]{\overline{#1}}
\nc{\bre}{{\rm b}} \nc{\sd}{\cals} \nc{\rb}{\rm RB}
\nc{\A}{\rm angularly decorated\xspace} \nc{\LL}{\rm L}
\nc{\w}{\rm wid} \nc{\arro}[1]{#1}
\nc{\ver}{\rm ver}
\nc{\dd}{\diamond}
\nc{\dr}{\diamond_r}
\nc{\dg}{\diamond_e}
\nc{\dk}{\diamond_\bfk}
\nc{\shar}{{\mbox{\cyrs X}}_r} 
\nc{\shag}{{\mbox{\cyrs X}}_g}
\nc{\de}{\Delta}
\nc{\delg}{\Delta_e}
\nc{\da}{\Delta_A}
\nc{\dgg}{\Delta_g}
\nc{\va}{\vep_A }
\nc{\ve}{\vep }
\nc{\vg}{\vep_e }
\nc{\vgg}{\vep_g }
\nc{\bug}{\bullet_e}
\nc{\bt}{\bar{\ot}}
\nc{\pg}{P_e}
\nc{\mg}{\mu_e}
\nc{\fs}{\frakS}
\nc{\wmrb}{extended Rota-Baxter algebra\xspace}
\nc{\wmrbs}{extended Rota-Baxter algebras\xspace}
\nc{\wmrbo}{extended Rota-Baxter operator\xspace}
\nc{\wmrbos}{extended Rota-Baxter operators\xspace}
\nc{\Wmrbos}{Extended Rota-Baxter operators\xspace}
\nc{\wmrbi}{extended Rota-Baxter identity\xspace}
\nc\dashed{\begin{pspicture}(0.2,0.3)
\psline[linecolor=blue,linestyle=dashed](0,-0.1)(0,0.3)
\end{pspicture}}
\nc\dlpa{\begin{pspicture}(0.65,0.65)
\psset{xunit=15pt,yunit=15pt}\psgrid[subgriddiv=1,griddots=4,
gridlabels=4pt](0,0)(1,1)
\psline(0,0)(0,1)
\psline(0,1)(1,1)
\end{pspicture}}
\def\dlpb{
\begin{pspicture}(0.65,0.65)
\psset{xunit=15pt,yunit=15pt}\psgrid[subgriddiv=1,griddots=4,
gridlabels=4pt](0,0)(1,1)
\psline(0,0)(1,0)
\psline(1,0)(1,1)
\end{pspicture}}
\nc\dlpc{\begin{pspicture}(0.65,0.65)
\psset{xunit=15pt,yunit=15pt}\psgrid[subgriddiv=1,griddots=4,
gridlabels=4pt](0,0)(1,1)
\psline[linecolor=red](0,0)(1,1)
\end{pspicture}}
\def\dlpd{
\begin{pspicture}(0.65,0.65)
\psset{xunit=15pt,yunit=15pt}\psgrid[subgriddiv=1,griddots=4,
gridlabels=4pt](0,0)(1,1)
\psline[linecolor=blue,linestyle=dashed](0,0)(1,1)
\end{pspicture}}
\nc\dlpe{\centering\begin{pspicture}(1,1)
\psset{xunit=15pt,yunit=15pt}\psgrid[subgriddiv=1,griddots=4,
gridlabels=4pt](0,0)(1,2)
\psline[linecolor=black](0,0)(1,0)
\psline[linecolor=black](1,0)(1,2)
\end{pspicture}
}
\nc\dlpee{\centering\begin{pspicture}(1,1)
\psset{xunit=15pt,yunit=15pt}\psgrid[subgriddiv=1,griddots=4,
gridlabels=4pt](0,0)(1,2)
\psline[linecolor=black](0,0)(0,2)
\psline[linecolor=black](0,2)(1,2)
\end{pspicture}
}
\nc\dlpf{\begin{pspicture}(1,1)
\psset{xunit=15pt,yunit=15pt}\psgrid[subgriddiv=1,griddots=4,
gridlabels=4pt](0,0)(1,2)
\psline[linecolor=black](0,0)(0,1)
\psline[linecolor=black](0,1)(1,1)
\psline[linecolor=black](1,1)(1,2)
\end{pspicture}
}
\nc\dlpg{
\begin{pspicture}(1,1)
\psset{xunit=15pt,yunit=15pt}\psgrid[subgriddiv=1,griddots=4,
gridlabels=4pt](0,0)(1,2)
\psline[linecolor=red](0,0)(1,1)
\psline[linecolor=black](1,1)(1,2)
\end{pspicture}}
\nc\dlpgg{
\begin{pspicture}(1,1)
\psset{xunit=15pt,yunit=15pt}\psgrid[subgriddiv=1,griddots=4,
gridlabels=4pt](0,0)(1,2)
\psline[linecolor=black](0,0)(0,1)
\psline[linecolor=red](0,1)(1,2)
\end{pspicture}}
\nc\dlph{
\begin{pspicture}(1,1)
\psset{xunit=15pt,yunit=15pt}\psgrid[subgriddiv=1,griddots=4,
gridlabels=4pt](0,0)(1,2)
\psline[linecolor=blue,linestyle=dashed](0,0)(1,1)
\psline[linecolor=black](1,1)(1,2)
\end{pspicture}
}
\nc\dlphh{
\begin{pspicture}(1,1)
\psset{xunit=15pt,yunit=15pt}\psgrid[subgriddiv=1,griddots=4,
gridlabels=4pt](0,0)(1,2)
\psline[linecolor=black](0,0)(0,1)
\psline[linecolor=blue,linestyle=dashed](0,1)(1,2)
\end{pspicture}
}
\nc\dlphr{\centering\begin{pspicture}(1,2)
\psset{xunit=15pt,yunit=15pt}\psgrid[subgriddiv=1,griddots=4,
gridlabels=4pt](0,0)(1,3)
\psline[linecolor=black](0,0)(1,0)
\psline[linecolor=black](1,0)(1,3)
\end{pspicture}
}
\nc\dlphs{\centering\begin{pspicture}(1,2)
\psset{xunit=15pt,yunit=15pt}\psgrid[subgriddiv=1,griddots=4,
gridlabels=4pt](0,0)(1,3)
\psline[linecolor=black](0,0)(0,1)
\psline[linecolor=black](0,1)(1,1)
\psline[linecolor=black](1,1)(1,3)
\end{pspicture}
}
\nc\dlpht{\begin{pspicture}(1,2)
\psset{xunit=15pt,yunit=15pt}\psgrid[subgriddiv=1,griddots=4,
gridlabels=4pt](0,0)(1,3)
\psline[linecolor=black](0,0)(0,2)
\psline[linecolor=black](0,2)(1,2)
\psline[linecolor=black](1,2)(1,3)
\end{pspicture}
}
\nc\dlphu{
\begin{pspicture}(1,2)
\psset{xunit=15pt,yunit=15pt}\psgrid[subgriddiv=1,griddots=4,
gridlabels=4pt](0,0)(1,3)
\psline[linecolor=black](0,0)(0,3)
\psline[linecolor=black](0,3)(1,3)
\end{pspicture}}
\nc\dlphv{
\begin{pspicture}(1,2)
\psset{xunit=15pt,yunit=15pt}\psgrid[subgriddiv=1,griddots=4,
gridlabels=4pt](0,0)(1,3)
\psline[linecolor=red](0,0)(1,1)
\psline[linecolor=black](1,1)(1,3)
\end{pspicture}}
\nc\dlphw{
\begin{pspicture}(1,2)
\psset{xunit=15pt,yunit=15pt}\psgrid[subgriddiv=1,griddots=4,
gridlabels=4pt](0,0)(1,3)
\psline[linecolor=blue,linestyle=dashed](0,0)(1,1)
\psline[linecolor=black](1,1)(1,3)
\end{pspicture}
}
\nc\dlphx{
\begin{pspicture}(1,2)
\psset{xunit=15pt,yunit=15pt}\psgrid[subgriddiv=1,griddots=4,
gridlabels=4pt](0,0)(1,3)
\psline[linecolor=black](0,0)(0,1)
\psline[linecolor=red](0,1)(1,2)
\psline[linecolor=black](1,2)(1,3)
\end{pspicture}
}
\nc\dlphy{
\begin{pspicture}(1,2)
\psset{xunit=15pt,yunit=15pt}\psgrid[subgriddiv=1,griddots=4,
gridlabels=4pt](0,0)(1,3)
\psline[linecolor=black](0,0)(0,1)
\psline[linecolor=blue,linestyle=dashed](0,1)(1,2)
\psline[linecolor=black](1,2)(1,3)
\end{pspicture}}
\nc\dlphz{
\begin{pspicture}(1,2)
\psset{xunit=15pt,yunit=15pt}\psgrid[subgriddiv=1,griddots=4,
gridlabels=4pt](0,0)(1,3)
\psline[linecolor=black](0,0)(0,2)
\psline[linecolor=blue,linestyle=dashed](0,2)(1,3)
\end{pspicture}
}
\nc\dlphzz{
\begin{pspicture}(1,2)
\psset{xunit=15pt,yunit=15pt}\psgrid[subgriddiv=1,griddots=4,
gridlabels=4pt](0,0)(1,3)
\psline[linecolor=black](0,0)(0,2)
\psline[linecolor=red](0,2)(1,3)
\end{pspicture}
}
\nc\dlpi{
\begin{pspicture}(1,1)
\psset{xunit=15pt,yunit=15pt}\psgrid[subgriddiv=1,griddots=4,
gridlabels=4pt](0,0)(2,2)
\psline(0,0)(0,1)
\psline(0,1)(1,1)
\psline(1,1)(1,2)
\psline(1,2)(2,2)
\end{pspicture}}
\nc\dlpj{
\begin{pspicture}(1,1)
\psset{xunit=15pt,yunit=15pt}\psgrid[subgriddiv=1,griddots=4,
gridlabels=4pt](0,0)(2,2)
\psline(0,0)(1,0)
\psline(1,0)(1,2)
\psline(1,2)(2,2)
\end{pspicture}}
\nc\dlpk{
\begin{pspicture}(1,1)
\psset{xunit=15pt,yunit=15pt}\psgrid[subgriddiv=1,griddots=4,
gridlabels=4pt](0,0)(2,2)
\psline[linecolor=red](0,0)(1,1)
\psline(1,1)(1,2)
\psline(1,2)(2,2)
\end{pspicture}}
\nc\dlpl{
\begin{pspicture}(1,1)
\psset{xunit=15pt,yunit=15pt}\psgrid[subgriddiv=1,griddots=4,
gridlabels=4pt](0,0)(2,2)
\psline[linecolor=blue](0,0)(1,1)
\psline(1,1)(1,2)
\psline(1,2)(2,2)
\end{pspicture}}
\nc\dlpm{
\begin{pspicture}(1,1)
\psset{xunit=15pt,yunit=15pt}\psgrid[subgriddiv=1,griddots=4,
gridlabels=4pt](0,0)(2,2)
\psline[linecolor=red](0,0)(1,1)
\psline[linecolor=blue](1,1)(2,2)
\end{pspicture}}
\nc\dlpn{
\begin{pspicture}(1,1)
\psset{xunit=15pt,yunit=15pt}\psgrid[subgriddiv=1,griddots=4,
gridlabels=4pt](0,0)(2,2)
\psline[linecolor=blue](0,0)(1,1)
\psline[linecolor=blue](1,1)(2,2)
\end{pspicture}
}
\nc\dlpo{
\begin{pspicture}(1,2)
\psset{xunit=15pt,yunit=15pt}\psgrid[subgriddiv=1,griddots=4,
gridlabels=4pt](0,0)(2,3)
\psline(0,0)(0,1)
\psline(0,1)(0,2)
\psline(0,2)(1,2)
\psline(1,2)(2,2)
\psline(2,2)(2,3)
\end{pspicture}
}
\nc\dlpp{
\begin{pspicture}(1,2)
\psset{xunit=15pt,yunit=15pt}\psgrid[subgriddiv=1,griddots=4,
gridlabels=4pt](0,0)(2,3)
\psline(0,0)(1,0)
\psline(1,0)(1,2)
\psline(1,2)(2,2)
\psline(2,2)(2,3)
\end{pspicture}}
\nc\dlpq{
\begin{pspicture}(1,2)
\psset{xunit=15pt,yunit=15pt}\psgrid[subgriddiv=1,griddots=4,
gridlabels=4pt](0,0)(2,3)
\psline(0,0)(1,0)
\psline(1,0)(1,1)
\psline[linecolor=blue](1,1)(2,2)
\psline(2,2)(2,3)
\end{pspicture}}
\nc\dlpr{
\begin{pspicture}(1,2)
\psset{xunit=15pt,yunit=15pt}\psgrid[subgriddiv=1,griddots=4,
gridlabels=4pt](0,0)(2,3)
\psline(0,0)(0,1)
\psline(0,1)(1,1)
\psline[linecolor=red](1,1)(2,2)
\psline(2,2)(2,3)
\end{pspicture}}
\nc\dlps{
\begin{pspicture}(1,2)
\psset{xunit=15pt,yunit=15pt}\psgrid[subgriddiv=1,griddots=4,
gridlabels=4pt](0,0)(2,3)
\psline[linecolor=red](0,0)(1,1)
\psline(1,1)(1,2)
\psline[linecolor=blue](1,2)(2,3)
\end{pspicture}}
\nc\dlpt{
\begin{pspicture}(1,2)
\psset{xunit=15pt,yunit=15pt}\psgrid[subgriddiv=1,griddots=4,
gridlabels=4pt](0,0)(2,3)
\psline[linecolor=blue](0,0)(1,1)
\psline(1,1)(1,2)
\psline[linecolor=red](1,2)(2,3)
\end{pspicture}}
\definecolor{darkred}{rgb}{0.7,0,0} 
\definecolor{darkgreen}{RGB}{0,180,0}
\begin{document}

\title[Extended Rota-Baxter algebras, colored Delannoy paths and Hopf algebras]{Extended Rota-Baxter algebras, diagonally colored Delannoy paths and Hopf algebras}
%

\author{Shanghua Zheng}
\address{School of Mathematics and Statistics, Jiangxi Normal University, Nanchang, Jiangxi 330022, China}
\email{zhengsh@jxnu.edu.cn}

\author{Li Guo}
\address{Department of Mathematics and Computer Science,
	Rutgers University,
	Newark, NJ 07102, USA}
\email{liguo@rutgers.edu}

\author{Huizhen Qiu}
\address{School of Mathematics and Statistics, Jiangxi Normal University, Nanchang, Jiangxi 330022, China}
\email{1197147595@qq.com}

\date{\today}
\begin{abstract}
The Rota-Baxter operator and the modified Rota-Baxter operator on various algebras are both important in mathematics and mathematical physics. The former is originated from the integration-by-parts formula and probability with applications to the renormalization of quantum field theory and the classical Yang-Baxter equation. The latter originated from Hilbert transformations with applications to ergodic theory and the modified Yang-Baxter equation.
Their merged form, called the \wmrbo, has also found interesting applications recently.

This paper presents a systematic study of the \wmrbo. We show that \wmrbos have properties similar to Rota-Baxter operatos and provide a linear structure that unifies Rota-Baxter operators and modified Rota-Baxter operators.
Examples of \wmrbos are also given, especially from polynomials and Laurent series due to their importance in ($q$-)integration and the renormalization in quantum field theory.
We then construct free commutative \wmrbs by a generalization of the quasi-shuffle product. The multiplication of the initial object in the category of commutative \wmrbs allows a combinatorial interpretation in terms of a color-enrichment of Delannoy paths.
Applying its universal property, we equip a free commutative \wmrb with a coproduct which has a cocycle condition, yielding a bialgebraic structure. We then show that this bialgebra on a free \wmrb possesses an increasing filtration and a connectedness property, culminating at a Hopf algebraic structure on a free commutative \wmrb.
\end{abstract}

\makeatletter
\@namedef{subjclassname@2020}{\textup{2020} Mathematics Subject Classification}
\makeatother
\subjclass[2020]{
16W99, 
17B37, 
16S10, 
16T10, 
16T30,  
57R56
}

\keywords{Rota-Baxter algebra, modified Rota-Baxter algebra, cocycle, classical Yang-Baxter equation, modified Yang-Baxter equation, Delannoy path, shuffle product, bialgebra, Hopf algebra}

\maketitle

\vspace{-1cm}

\tableofcontents

\vspace{-1cm}

\setcounter{section}{0}

\allowdisplaybreaks

\section{Introduction}
Rota-Baxter algebras and modified Rota-Baxter algebras both have their origins from integrals in probability and ergodic theory~\mcite{Ba,Co,Tri}. Interestingly, their analogs for Lie algebras are both related to the classical Yang-Baxter equation~\mcite{STS}. The purpose of this paper is to put the two classical linear operators in the same context.

\vsb
\subsection{Rota-Baxter operators and the classical Yang-Baxter equation}

For a given scalar $\lambda$, a {\bf Rota-Baxter algebra of weight $\lambda$} is a pair $(R,P)$
consisting of an associative algebra $R$ and a linear operator $P:R\to R$ that satisfies the {\bf Rota-Baxter identity}
\begin{equation}
P(x)P(y)=P\big(xP(y)\big)+P\big(P(x)y\big)+\lambda P(xy), \ \quad x,y \in R.
\mlabel{eq:rb}
\end{equation}
Then $P$ is called a {\bf Rota-Baxter operator (RBO) of weight $\lambda$}.
The notion of a Rota-Baxter algebra can be viewed as an algebraic abstraction of  the algebra of continuous functions  equipped with the Riemann integral operator $I[f](x):=\int_a^xf(t)\,dt$, as an analog of a differential algebra which was explored initially by Ritt and Kolchin~\mcite{Ko,PS,Ri} as an algebraic study of differential equations. In fact, taking $\lambda=0$, then the Rota-Baxter identity recovers the integration-by-parts formula.

Rota-Baxter algebra was introduced by G. Baxter to understand Spitzer's identity in fluctuation theory in probability~\mcite{Ba}.  Afterwards,  it attracted attentions of prominent mathematicians such as Atkinson, Cartier and Rota~\mcite{Ca,Ro,Ro2}.  In the early 1980s,
Rota-Baxter operators on Lie algebras were independently discovered by Semenov-Tian-Shansky in~\mcite{STS} as the operator form of the classical Yang-Baxter equation~\mcite{BD}, named after the physicists C. N. Yang and R. J. Baxter.

Over the past two decades, the study of associative Rota-Baxter algebras has undergone an extraordinary renascence thanks to broad applications and connections such as renormalization of quantum field theory~\mcite{CK00}, Yang-Baxter equations~\mcite{Bai,BG,Bor,GS2,GG}, operads~\mcite{Ag,BBGN},
combinatorics~\mcite{YGT}, Lie groups~\mcite{GLS21,LS22}, deformation theories~\mcite{TBGS19} and Hopf algebras~\mcite{AGKO,ZGG,ZGG19}.
See~\mcite{Gub} for further details.
\vsb
\subsection{Modified Rota-Baxter operators and the modified Yang-Baxter equation}
The term modified Rota-Baxter operator stemmed from the notion of the modified classical Yang-Baxter equation, which was also introduced in the above-mentioned work of Semenov-Tian-Shansky~\mcite{STS} as a modification of the operator form of the classical Yang-Baxter equation.  It was later applied  to the studies of nonabelian generalized Lax pairs, affine geometry of Lie groups and (extended) $\calo$-operators~\mcite{BGN,Bor}.

However, as already observed by Rota and Smith~\mcite{RS}, the associative analog of the modified classical Yang-Baxter equation had appeared much earlier in the work of the prominent analyst Tricomi in 1951~\mcite{Tri} and then of Cotlar in his remarkable 1955 work~\mcite{Co} unifying Hilbert transformations and ergodic theory. See also~\mcite{Pe,Roo,RS}. If a linear map $Q:R\to R$ satisfies the {\bf modified Rota-Baxter identity} (with a fixed scalar $\kappa$)
\begin{equation}\mlabel{eq:mrb}
Q(x)Q(y)=Q\big(xQ(y)\big)+Q\big(Q(x)y\big)+\kappa xy, \ \quad x,y \in R.
\end{equation}
Then $Q$ is called a {\bf modified Rota-Baxter operator (MRBO) of weight $\kappa$}, and the pair $(R,Q)$  is called a {\bf modified Rota-Baxter algebra of weight $\kappa$}. When $\kappa=-1$, then Eq.~(\mref{eq:mrb}) for a Lie algebra corresponds to the modified Yang-Baxter equation.

\vsb
\subsection{\Wmrbos}

Due to the importance of Rota-Baxter algebras and modified Rota-Baxter algebras, it is natural to study their common expansions. Indeed, for their Lie algebra counterpart, such structures have been applied to study Lax pairs in integrable systems and to give a unified study of tensor forms and operator forms of the classical Yang-Baxter equation~\mcite{BGN,BGN3}. The associative constructions have also be  given~\mcite{BGN2}, under the term extended $\calo$-operators, in order to study associative Yang-Baxter equations.

Motivated by these applications and the recent developments on the related algebraic structures~\mcite{BGN2,ZGG,ZGG19,ZGG192}, this paper presents a systematic algebraic study of the combination of Rota-Baxter algebras and modified Rota-Baxter algebras, called {\bf \wmrbs}.

Several interesting properties discovered from this investigation further justify their study.

First, the linear closedness is a desired property of any algebraic structure. The related studies can be found from the classical notions of bi-Hamiltonian systems~\mcite{Mag} and compatible Lie algebras~\mcite{GS1} to the recent multi-pre-Lie algebras~\mcite{BHZ} and matching Rota-Baxter algebras~\mcite{ZyGG}. Extended Rota-Baxter operators are shown to have this property under certain compatibility conditions (Theorem~\mref{thm:genfij}), providing a linear system containing both Rota-Baxter operators and modified Rota-Baxter operators.

Second, the free construction of \wmrbs inherits the combinatorial significance of the Rota-Baxter algebras and modified Rota-Baxter algebras, including the shuffle products and Delannoy paths (Theorem~\,\mref{thm:ab} and Proposition~\,\mref{pp:del}).

Finally, the free construction also has a natural connected bialgebra and thus a Hopf algebra structure, induced by a cocycle condition (Theorem~\,\ref{thm:rbhopf}).

This investigation also sheds further light to the recent studies of Rota's program in classifying operator identities satisfied by linear operators on algebras, in analog to the much studied polynomial identities (PI) of algebras~\mcite{DF,Pr}.

Rota in~\mcite{Ro2} observed that linear operators satisfying algebraic identities (derivation and Rota-Baxter operators for example) were essential in studying diverse phenomena in mathematics. He then suggested a program to classify such linear operators. Following Rota's suggestion, Rota's Classification Problem of linear operators was formulated and several progresses on the problems were made, leading to newly discovered operators such as differential type operators and Rota-Baxter type operators~\mcite{BE22,GGZs,GSZ,ZGG23,ZGGS}. The \wmrbo belongs to neither of these operators, suggesting a new direction in studying Rota's program.

In this work we mostly focus on commutative \wmrbs. Further study of  the noncommutative case will be continued in a separate work.

\vspace{-.45cm}
\subsection{Layout of the paper}
The paper is organized as follows.

In Section~\mref{sec:gps}, we show that many properties of Rota-Baxter operators can be generalized to \wmrbos. Furthermore, the lack of linear closedness of Rota-Baxter operators and modified Rota-Baxter operators is remedied by \wmrbos. This property provides a rich class of \wmrbs.

Section~\mref{sec:FreeCommGRBA} constructs free \wmrbos generated by a commutative algebra $A$. Motivated by the construction of free commutative (modified) Rota-Baxter algebras in ~\mcite{GK1,ZGG19}, we first establish a product on the tensor module generated by $A$ in terms of a generalized quasi-shuffle product. We then define an \wmrbo on the tensor module to be the right-shift operator to obtain the free \wmrb on $A$. As a special case, the free \wmrb on the base ring $\bfk$ is provided at the end, which gives a generalization of the divided power algebra and has a combinatorial interpretation by colored Delannoy paths.

Another source of inspiration of this study comes from the Hopf algebra structures established on free commutative (modified) Rota-Baxter algebras. Thus Section~\mref{sec: Hopfcomm} aims to equip free commutative \wmrbs with a bialgebra structure, where the comultiplication and the counit are constructed by the universal property of free commutative \wmrbs and a cocycle property.  Furthermore, when the generating algebra $A$ is a connected filtered bialgebra, the bialgebra structure on the free commutative \wmrbs is also connected and hence a Hopf algebra.
We end the section by giving an explicit construction of the Hopf algebra in the special case of $A=\bfk$.
\smallskip

\noindent
{\bf Convention.} Throughout this paper,  $\bfk$ is a commutative unitary ring.  All algebras are taken to be over $\bfk$, as are the linear maps and tensor products, unless otherwise specified.

\vsb
\section{Properties and examples of \wmrbs}
\mlabel{sec:gps}
In this section, we first give the notion of an \wmrb that generalizes the Rota-Baxter algebra and the modified Rota-Baxter algebra. Then we give examples and general properties of \wmrbs, emphasizing on its role in providing a linear structure that contains both  Rota-Baxter operators and modified Rota-Baxter operators.

\begin{defn}
Let $R$ be a $\bfk$-algebra and $\lambda,\kappa \in \bfk$. A linear map $P:R\to R$ is called an {\bf \wmrbo of (double) weight $(\lambda,\kappa)$}, or simply an {\bf \wmrbo} if $P$ satisfies the {\bf \wmrbi}
\begin{equation}
	P(x)P(y)=P\big(xP(y)\big)+P\big(P(x)y\big)+\lambda P(xy)+\kappa xy , \quad  x,y\in R. 
\mlabel{eq:grb}
\end{equation}
Then the pair $(R,P)$ is called an {\bf \wmrb of (double) weight $(\lambda,\kappa)$}, or simply an {\bf \wmrb}.
An {\bf \wmrb homomorphism} $f: (R_1,P_1)\to (R_2,P_2)$ between two \wmrbs $(R_1,P_1)$ and  $(R_2,P_2)$ of the same weight $(\lambda,\kappa)$ is an algebra homomorphism $f: R_1\to R_2$ such that $f\circ P_1 = P_2\circ f$.
\mlabel{defn:freeGRBA}
\vsb
\end{defn}

In the relative context, the \wmrbo is a part of the extended $\calo$-operators (or extended relative Rota-Baxter operators)~\mcite{BGN3}. Since the notion requires some background to define and is not needed in this paper, we refer the reader to the original literature for details and applications to Yang-Baxter equations. As noted in the introduction, our goal is to give a systematic study of this algebraic structure, together with their combinatorial significance.

A basic example of Rota-Baxter operator of weight $\lambda$ is the scalar product $P:=-\lambda \id:R\to R $ ~\cite[Exercise 1.1.7]{Gub}. Further the weight of a Rota-Baxter operator is unique except in degenerated cases. Here is a more general result for \wmrbs.
\begin{prop}
Let $P:R\to R$ be a linear operator.
\begin{enumerate}
\item
If $P=\mu \id$ is a scalar product for $\mu\in\bfk$, then $P$ is an \wmrbo of weight $(\lambda,\kappa)$ precisely for $(\lambda,\kappa)=(\lambda,-\mu^2-\mu\lambda)$ with $\lambda\in \bfk$ arbitrary;
\mlabel{it:sca1}
\item
Let $R$ be a unitary algebra and let $\bfk$ be a field. If $P$ is not a scalar product and is an \wmrbo, then its weight is unique.     \mlabel{it:sca2}
\end{enumerate}
\mlabel{pp:scalar}
\end{prop}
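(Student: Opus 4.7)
The plan is to substitute the ansatz $P = \mu\id$ into the extended Rota-Baxter identity \meqref{eq:grb} for part~(\mref{it:sca1}), and for part~(\mref{it:sca2}) to subtract two copies of \meqref{eq:grb} for the same $P$ under different hypothesized weights, then extract a constraint that forces $P$ to be scalar whenever the two weights disagree.

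For part~(\mref{it:sca1}), plugging $P = \mu\id$ into \meqref{eq:grb} gives $\mu^{2}xy = 2\mu^{2}xy + \lambda\mu\,xy + \kappa\,xy$, which collapses to $(\mu^{2} + \mu\lambda + \kappa)xy = 0$ for all $x,y \in R$. Taking $R$ to carry a unit (or using any element with nontrivial product), specializing $x = y = 1$ forces $\mu^{2} + \mu\lambda + \kappa = 0$, hence $\kappa = -\mu^{2} - \mu\lambda$ with $\lambda\in\bfk$ otherwise free. Conversely, under this scalar constraint the identity clearly holds for every $\lambda \in \bfk$, which settles both directions.

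For part~(\mref{it:sca2}), suppose $P$ satisfies \meqref{eq:grb} for both weights $(\lambda_{1},\kappa_{1})$ and $(\lambda_{2},\kappa_{2})$. Subtracting the two identities yields
\[
(\lambda_{1} - \lambda_{2})\,P(xy) + (\kappa_{1} - \kappa_{2})\,xy = 0 \quad\text{for all } x,y \in R.
\]
Setting $x = y = 1$ gives $\kappa_{1} - \kappa_{2} = -(\lambda_{1} - \lambda_{2})\,P(1)$. Substituting this relation back and then setting $y = 1$ produces $(\lambda_{1} - \lambda_{2})\bigl(P(x) - P(1)\,x\bigr) = 0$ for every $x \in R$. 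Since $\bfk$ is a field, either $\lambda_{1} = \lambda_{2}$ or $P(x) = P(1)\,x$ for all $x$; the latter says $P$ is the scalar product $P(1)\id$, which is excluded by hypothesis. Therefore $\lambda_{1} = \lambda_{2}$, and then the displayed relation forces $\kappa_{1} = \kappa_{2}$.

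The computational content is light, and I foresee no serious obstacle beyond bookkeeping. The only real subtlety is the case analysis in part~(\mref{it:sca2}), where both the unital structure of $R$ (to extract information by evaluating at $1$) and the field hypothesis on $\bfk$ (to cancel the would-be nonzero scalar $\lambda_{1} - \lambda_{2}$) are needed, and where the non-scalar hypothesis on $P$ is exactly what rules out the degenerate alternative.
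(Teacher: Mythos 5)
Your proposal is correct and follows essentially the same route as the paper: substitute $P=\mu\id$ into the identity for part (a), and for part (b) subtract the two instances of the identity, evaluate at $y=1$, and use the field hypothesis to conclude that $\lambda_1\neq\lambda_2$ would force $P$ to be a scalar product. The only cosmetic difference is that you route the argument through $P(1)$ (note $P(1)$ a priori lies in $R$, but your own relation $(\kappa_1-\kappa_2)1_R=-(\lambda_1-\lambda_2)P(1)$ shows it is a scalar multiple of $1_R$ when $\lambda_1\neq\lambda_2$), whereas the paper divides directly to obtain $P(x)=\frac{\kappa_2-\kappa_1}{\lambda_1-\lambda_2}\,x$.
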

\begin{proof}
\meqref{it:sca1}
Let $P=\mu\id$. Then the \wmrbi in Eq.~\meqref{eq:grb}
means
$$\mu^2xy=\mu^2xy+\mu^2 xy +\mu \lambda xy +\kappa xy,$$
which simplifies to
$ \mu^2+\mu \lambda+\kappa=0.$
This holds precisely when $\kappa=-\mu^2-\mu\lambda$, for arbitrary $ \lambda\in \bfk.$
\smallskip

\noindent
\meqref{it:sca2}
Suppose that $P$ is not a scalar multiplication and is an \wmrbo. If $P$ has two weights $(\lambda_1,\kappa_1)$ and $(\lambda_2,\kappa_2)$, then for all $x,y\in R$,
$$ P(x)P(y)=P(xP(y)+P(x)y+\lambda_1 xy)+\kappa_1 xy, \quad P(x)P(y)=P(xP(y)+P(x)y+\lambda_2 xy)+\kappa_2 xy.$$
This means
\vsb
$$(\lambda_1-\lambda_2) P(xy)=(\kappa_2-\kappa_1 ) xy,\quad x,y\in R,$$
Taking $y=1$ gives $(\lambda_1-\lambda_2) P(x)=(\kappa_2-\kappa_1)x$ for all $x\in R$. If $\lambda_1\neq \lambda_2$, then $P(x)=\frac{\kappa_2-\kappa_1}{\lambda_1-\lambda_2}x$, contradicting the assumption that $P$ is not a scalar product. So $\lambda_1=\lambda_2$. Then $(\kappa_1-\kappa_2)x=0$, yielding $\kappa_1=\kappa_2$. Thus $(\lambda_1,\kappa_1)=(\lambda_2,\kappa_2)$.
\end{proof}

The following results generalize the interconnections among Rota-Baxter operators.
\begin{prop}
Let $P$ be a linear operator on an algebra $R$.
\begin{enumerate}
\item
Let $P$ be an \wmrbo of weight $(\lambda,\kappa)$ and let $\mu$ be a scalar in $\bfk$. Then $\mu P$ is an \wmrbo of weight $(\lambda \mu,\kappa\mu^2)$. The converse holds if $\mu$ is invertible in $\bfk$;
\mlabel{it:lambdap}
\item
Let $\tilde{P}:=-\lambda \id-P$, called the {\bf adjoint operator} of $P$. Then $P$ is an \wmrbo of weight $(\lambda,\kappa)$ if and only if $\tilde{P}$ is one.
\mlabel{it:tildep}
\end{enumerate}
\mlabel{prop:exte}
\end{prop}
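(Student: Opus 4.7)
The plan is to verify both parts by direct substitution into the defining identity \eqref{eq:grb}, exploiting the fact that the identity is polynomial in $P$ and has a simple dependence on the weight parameters.

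For part \meqref{it:lambdap}, I would start from the hypothesis $P(x)P(y)=P(xP(y))+P(P(x)y)+\lambda P(xy)+\kappa xy$ and multiply both sides by $\mu^{2}$. Redistributing the factors of $\mu$ as $\mu^{2}=\mu\cdot\mu$ on the left and as $\mu \cdot \mu$ inside each of the first two terms on the right, the identity rewrites as $(\mu P)(x)(\mu P)(y) = (\mu P)(x(\mu P)(y)) + (\mu P)((\mu P)(x)y) + (\lambda\mu)(\mu P)(xy) + (\mu^{2}\kappa)xy$, which is exactly the \wmrbi for $\mu P$ with weight $(\lambda\mu,\kappa\mu^{2})$. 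For the converse under the assumption that $\mu$ is invertible, I would apply the forward direction to the operator $\mu P$ with the scalar $\mu^{-1}$, obtaining $P = \mu^{-1}(\mu P)$ as an \wmrbo whose weight unwinds to $(\lambda,\kappa)$.

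For part \meqref{it:tildep}, since the map $P\mapsto\tilde P=-\lambda\id-P$ is an involution (one checks $\tilde{\tilde P}=P$), it suffices to prove one direction. Assuming $P$ satisfies Eq.~\meqref{eq:grb}, I would expand $\tilde P(x)\tilde P(y)=(-\lambda x-P(x))(-\lambda y-P(y))$, obtaining $\lambda^{2}xy+\lambda xP(y)+\lambda P(x)y+P(x)P(y)$. Separately, I would expand each of the four terms $\tilde P(x\tilde P(y))$, $\tilde P(\tilde P(x)y)$, $\lambda\tilde P(xy)$, and $\kappa xy$ by substituting $\tilde P=-\lambda\id-P$ and using linearity of $P$; the $\lambda^{2}xy$ and $\lambda P(xy)$ contributions partially cancel, while the terms $P(xP(y))+P(P(x)y)+\lambda P(xy)+\kappa xy$ collapse to $P(x)P(y)$ via the hypothesized \wmrbi for $P$. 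What remains matches the expansion of $\tilde P(x)\tilde P(y)$.

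Neither part presents a real obstacle; both are mechanical verifications. The only point of minor care is in part \meqref{it:tildep}, where one must track sign conventions and cancel the $\lambda^{2}xy$ cross-terms correctly before invoking the identity for $P$. The use of the involutive symmetry $\tilde{\tilde P}=P$ to avoid redoing the calculation for the converse direction will keep the proof short.
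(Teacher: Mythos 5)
Your proposal is correct and follows essentially the same route as the paper: part (a) by multiplying the identity through by $\mu^{2}$ and redistributing the scalars (with the converse obtained by rescaling by $\mu^{-1}$), and part (b) by direct expansion of both sides together with the involution $\tilde{\tilde P}=P$ to dispose of the converse. The only cosmetic difference is that you invoke the \wmrbi for $P$ while simplifying the right-hand side, whereas the paper substitutes it into the expansion of $\tilde P(x)\tilde P(y)$; the computation is the same either way.
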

\begin{proof}
(\mref{it:lambdap}) is obtained by multiplying $\mu^2$ across the \wmrbi \meqref{eq:grb}.
\smallskip
	
\noindent
(\mref{it:tildep})
If $P$ is an \wmrbo of weight $(\lambda,\kappa)$ on $R$, then for all $x,y\ \in R$,
\vsb
\begin{align*}
\tilde{P}(x)\tilde{P}(y)={}& (-\lambda \id-P)(x)(-\lambda \id-P)(y)\\
={}& \lambda^2xy+\lambda xP(y)+\lambda P(x)y+P(x)P(y)\\
		={}& \lambda^2xy+\lambda xP(y)+\lambda P(x)y+P\big(P(x)y\big)+P\big(xP(y)\big)+\lambda P(xy)+\kappa xy.
	\end{align*}
On the other hand,
\vsb
	\begin{align*}
		&\tilde{P}\Big(\tilde{P}(x)y\Big)+\tilde{P}\Big(x\tilde{P}(y)\Big)+\lambda \tilde{P}(xy)+\kappa xy\\
		={}& (-\lambda \id-P)\Big((-\lambda \id-P)(x)y\Big)+(-\lambda \id-P)\Big(x(-\lambda \id-P)(y)\Big)+\lambda(-\lambda \id-P)(xy)+\kappa xy\\
		={}& \lambda^2xy+\lambda P(x)y+\lambda P(xy)+P\big(P(x)y\big)+\lambda^2xy+\lambda xP(y)+\lambda P(xy)+P\big(xP(y)\big)\\
		&-\lambda^2xy-\lambda P(xy)+\kappa xy\\
		={}& \lambda^2xy+\lambda P(x)y+\lambda xP(y)+P\big(P(x)y\big)+P\big(xP(y)\big)+\lambda P(xy)+\kappa xy,
	\end{align*}
which agrees with $\tilde{P}(x)\tilde{P}(y)$. Hence $\tilde{P}$ is an \wmrbo of weight $(\lambda,\kappa)$.

The converse holds since $\tilde{\tilde{P}}=-\lambda\id - \tilde{P}=P$.
\end{proof}


The following property shows that \wmrbos provide a unified context for Rota-Baxter operators and modified Rota-Baxter operators with linear closedness.

\begin{theorem}
Let $P_1$ and $P_2$ be \wmrbos of weight $(\lambda_1,\kappa_1)$ and $(\lambda_2,\kappa_2)$ respectively. Suppose that $P_1$ and $P_2$ satisfy the compatible properties that, for $1\leq i\neq j\leq 2$, 
\vsb
\begin{equation}
P_i(u)P_j(v)=P_i(uP_j(v))+P_j(P_i(u)v)+\alpha_{ij} P_i(uv)+\beta_{ij} P_j(uv)+\gamma_{ij}uv, \,\quad u,v\in R,
\mlabel{eq:comij}
\end{equation}
for some $\alpha_{ij}, \beta_{ij}, \gamma_{ij}\in\bfk$. If $\beta_{12}+\alpha_{21}=\lambda_1$ and $\alpha_{12}+\beta_{21}=\lambda_2$,
then for all $a,b\in \bfk$, $Q:=Q_{a,b}:=aP_1+bP_2$ is an \wmrbo of weight $(a\lambda_1+b\lambda_2,a^2\kappa_1+b^2\kappa_2+ab(\gamma_{12}+\gamma_{21}))$.
	\mlabel{thm:genfij}
\end{theorem}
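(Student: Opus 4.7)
The plan is to verify the extended Rota-Baxter identity for $Q = aP_1 + bP_2$ by a direct bilinear expansion of $Q(x)Q(y)$ followed by application of the four available identities (the extended RB identities for $P_1$ and $P_2$, and the two compatibility relations of Eq.~\eqref{eq:comij}). The computation is essentially bookkeeping, so the main task is to show that the two hypotheses $\beta_{12}+\alpha_{21}=\lambda_1$ and $\alpha_{12}+\beta_{21}=\lambda_2$ are precisely what is needed to assemble the coefficients correctly.

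First I would write
\[
Q(x)Q(y) = a^{2} P_{1}(x)P_{1}(y) + ab\,P_{1}(x)P_{2}(y) + ab\,P_{2}(x)P_{1}(y) + b^{2} P_{2}(x)P_{2}(y),
\]
and rewrite each of the four products using Eq.~\eqref{eq:grb} for the diagonal ones and Eq.~\eqref{eq:comij} for the off-diagonal ones. Then I would group the resulting terms according to four types: (i) terms of the form $P_i(xP_j(y))$, (ii) terms of the form $P_i(P_j(x)y)$, (iii) terms of the form $P_i(xy)$, and (iv) pure scalar multiples of $xy$.

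For type (i), the four contributions are $a^2 P_1(xP_1(y))$, $ab\,P_1(xP_2(y))$, $ab\,P_2(xP_1(y))$, $b^2 P_2(xP_2(y))$, which combine naturally as $aP_1\bigl(xQ(y)\bigr) + bP_2\bigl(xQ(y)\bigr) = Q\bigl(xQ(y)\bigr)$ by linearity of $P_1$ and $P_2$. Type (ii) collapses to $Q\bigl(Q(x)y\bigr)$ by an identical regrouping. For type (iv), the contributions are $a^2\kappa_1 + ab\gamma_{12} + ab\gamma_{21} + b^2\kappa_2$, which is already the claimed weight $a^2\kappa_1+b^2\kappa_2+ab(\gamma_{12}+\gamma_{21})$.

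The only subtle step is type (iii). The coefficient of $P_1(xy)$ is $a^{2}\lambda_{1}+ab(\alpha_{12}+\beta_{21})$, and the coefficient of $P_{2}(xy)$ is $b^{2}\lambda_{2}+ab(\beta_{12}+\alpha_{21})$. Substituting the two hypotheses $\alpha_{12}+\beta_{21}=\lambda_{2}$ and $\beta_{12}+\alpha_{21}=\lambda_{1}$ turns these into $a^{2}\lambda_{1}+ab\lambda_{2}$ and $ab\lambda_{1}+b^{2}\lambda_{2}$ respectively, whose sum is exactly $(a\lambda_{1}+b\lambda_{2})\bigl(aP_{1}(xy)+bP_{2}(xy)\bigr) = (a\lambda_{1}+b\lambda_{2})Q(xy)$. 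Putting the four groups together yields the \wmrbi for $Q$ with weight $(a\lambda_1+b\lambda_2,\,a^2\kappa_1+b^2\kappa_2+ab(\gamma_{12}+\gamma_{21}))$, completing the proof. The main obstacle is purely organizational: keeping the eight summands from the two compatibility relations correctly paired off so that the interpretation of the hypotheses as ``the cross-weights sum to $\lambda_1$ and $\lambda_2$'' becomes visible.
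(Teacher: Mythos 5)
Your proposal is correct and follows essentially the same route as the paper: bilinear expansion of $Q(x)Q(y)$, application of the two extended Rota-Baxter identities and the two compatibility relations, and then collection of the $P_1(xy)$ and $P_2(xy)$ coefficients, where the hypotheses $\alpha_{12}+\beta_{21}=\lambda_2$ and $\beta_{12}+\alpha_{21}=\lambda_1$ are used exactly as in the paper to produce $(a\lambda_1+b\lambda_2)Q(xy)$. The only difference is presentational: you regroup the terms directly into $Q(xQ(y))+Q(Q(x)y)+\cdots$, while the paper expands both sides separately and matches them term by term.
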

\begin{proof}
	For all $u,v\in R$, we have
	\begin{eqnarray*}
		Q(u)Q(v)
		&=&a^2P_1(u)P_1(v)+abP_1(u)P_2(v)+abP_2(u)P_1(v)+b^2P_2(u)P_2(v)\\
		&=&a^2P_1(uP_1(v)+P_1(u)v+\lambda_1 uv)+a^2\kappa_1 uv+abP_1(u)P_2(v)+abP_2(u)P_1(v)\\
		&&+b^2P_2(uP_2(v)+P_2(u)v+\lambda_2 uv)+b^2\kappa_2 uv\\
		&=&a^2P_1(uP_1(v))+a^2P_1(P_1(u)v)+a^2\lambda_1 P_1(uv)+a^2\kappa_1 uv+abP_1(uP_2(v))\\
		&&+abP_2(P_1(u)v)+ab\alpha_{12} P_1(uv)+ab\beta_{12}P_2(uv)+ab\gamma_{12}uv\\
		&&+abP_2(uP_1(v))+abP_1(P_2(u)v)+ab\alpha_{21} P_2(uv)+ab \beta_{21} P_1(uv)+ab\gamma_{21}uv\\
		&&+b^2P_2(uP_2(v)+b^2P_2(P_2(u)v)+b^2\lambda_2 P_2(uv)+b^2\kappa_2 uv\quad(\text{by Eq.~\meqref{eq:comij}})\\
&=&a^2P_1(uP_1(v))+a^2P_1(P_1(u)v)+abP_1(uP_2(v))+abP_2(P_1(u)v)\\
		&&+abP_2(uP_1(v))+abP_1(P_2(u)v)+b^2P_2(uP_2(v))+b^2P_2(P_2(u)v)\\
&&+(a^2\lambda_1 +ab\lambda_2) P_1(uv)+(ab\lambda_1+b^2\lambda_2) P_2(uv)\\
&&+(a^2\kappa_1  +ab\gamma_{12} +ab\gamma_{21} +b^2\kappa_2) uv.  \quad(\text{by $\alpha_{12}+\beta_{21}=\lambda_2$ and $\beta_{12}+\alpha_{21}=\lambda_1$})
	\end{eqnarray*}
On the other hand,
\vsb
\begin{eqnarray*}
		&&Q(uQ(v))+Q(Q(u)v)+(a\lambda_1+b\lambda_2)Q(uv)+ \big(a^2\kappa_1+b^2\kappa_2+ab(\gamma_{12}+\gamma_{21})\big) uv\\
		&=&(aP_1+bP_2)(auP_1(v)+buP_2(v))+(aP_1+bP_2)(aP_1(u)v+bP_2(u)v)\\
		&&+(a\lambda_1+b\lambda_2)(aP_1(uv)+bP_2(uv))+\big(a^2\kappa_1+b^2\kappa_2+ab(\gamma_{12}+\gamma_{21})\big) uv\\
		&=&a^2P_1(uP_1(v))+abP_1(uP_2(v))+abP_2(uP_1(v))+b^2P_2(uP_2(v))\\
		&&+a^2P_1(P_1(u)v)+abP_1(P_2(u)v)+abP_2(P_1(u)v)+b^2P_2(P_2(u)v)\\
		&&+(a\lambda_1+b\lambda_2)(aP_1(uv)+bP_2(uv))+\big(a^2\kappa_1+b^2\kappa_2+ab(\gamma_{12}+\gamma_{21})\big) uv.
	\end{eqnarray*}
This agrees with $Q(u)Q(v)$, proving the theorem.
\end{proof}

\vsb

We give some special cases. First taking $\kappa_i=0$ and $\beta_{ij}=\gamma_{ij}=0$ gives the matching Rota-Baxter operators introduced in~\mcite{ZyGG} motivated by multi-pre-Lie algebras from regularity structures~\mcite{BHZ}. Next taking $P_2=\tilde{P}_1$, we obtain

\begin{coro}
 	Let $P$ be an \wmrbo of weight $(\lambda,\kappa)$ on $R$.
 	\begin{enumerate}
 		\item
 		Let $\tilde{P}=-\lambda \id -P$. Then $P$ and $\tilde{P}$ satisfy the following compatible properties$:$
 		\begin{eqnarray*}
 			&&P(u)\tilde{P}(v)=P(u\tilde{P}(v))+\tilde{P}(P(u)v)-\kappa uv\\
 			&& \tilde{P}(u)P(v)=\tilde{P}(uP(v))+P(\tilde{P}(u)v)-\kappa uv,\,\quad u,v\in R.
 			\vse
 		\end{eqnarray*}
 		\mlabel{it:compro}
 		\vse
 		\item
 		For all $a,b\in \bfk$, the operator $Q:=Q_{a,b}=aP+b\tilde{P}$ is an \wmrbo of weight $(\lambda(a+b),ab\lambda^2+(a-b)^2\kappa)$.
 		\mlabel{it:combination}
 	\end{enumerate}
 	\mlabel{coro:genf2}
 \end{coro}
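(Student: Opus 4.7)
The plan is to prove (\mref{it:compro}) by direct substitution of $\tilde{P} = -\lambda\id - P$ into each compatibility identity, and then to deduce (\mref{it:combination}) by expressing $Q_{a,b}$ as a scalar shift of a rescaled $P$ and applying Proposition~\mref{prop:exte}(\mref{it:lambdap}). For (\mref{it:compro}), I would expand
\[
P(u)\tilde{P}(v) \;=\; -\lambda\, P(u)v \;-\; P(u)P(v)
\]
and substitute the \wmrbi~\meqref{eq:grb} for $P(u)P(v)$; similarly the RHS $P(u\tilde{P}(v)) + \tilde{P}(P(u)v) - \kappa uv$, after inserting $\tilde{P}=-\lambda\id - P$ in two places and using \meqref{eq:grb} once to consolidate $P(uP(v)) + P(P(u)v)$, collapses to the same expression $-\lambda P(u)v - P(u)P(v)$. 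The second identity is proved by an entirely parallel computation starting from $\tilde{P}(u)P(v) = -\lambda uP(v) - P(u)P(v)$; note that it is not a formal consequence of the first unless $R$ is commutative, since \meqref{eq:grb} itself is not symmetric in its two arguments.

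For (\mref{it:combination}), the key observation is that
\[
Q_{a,b} \;=\; aP + b\tilde{P} \;=\; (a-b)P \;-\; b\lambda\,\id.
\]
By Proposition~\mref{prop:exte}(\mref{it:lambdap}), the rescaling $(a-b)P$ is an \wmrbo of weight $\bigl((a-b)\lambda,\,(a-b)^2\kappa\bigr)$. It then suffices to verify the following scalar-shift principle: if $\phi$ is an \wmrbo of weight $(\lambda',\kappa')$, then for every $\mu \in \bfk$ the shifted operator $\phi + \mu\,\id$ is an \wmrbo of weight $\bigl(\lambda'-2\mu,\,\kappa'+\mu^2-\lambda'\mu\bigr)$. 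This principle is a short direct check: expand $(\phi+\mu\,\id)(u)(\phi+\mu\,\id)(v)$ and the candidate RHS of the \wmrbi, use \meqref{eq:grb} for $\phi$ to cancel $\phi(u)\phi(v)$, and read off the new weights by matching coefficients of $\phi(uv)$ and $uv$. Applying this with $\phi = (a-b)P$ and $\mu = -b\lambda$, the simplifications
\[
(a-b)\lambda - 2(-b\lambda) = (a+b)\lambda, \qquad (a-b)^2\kappa + b^2\lambda^2 + b(a-b)\lambda^2 = (a-b)^2\kappa + ab\lambda^2
\]
produce exactly the weight $\bigl(\lambda(a+b),\,ab\lambda^2+(a-b)^2\kappa\bigr)$ claimed.

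The main obstacle to avoid is the natural temptation to invoke Theorem~\mref{thm:genfij} directly with $P_1 = P$ and $P_2 = \tilde{P}$. The coefficients $\alpha_{ij},\beta_{ij}$ extracted from the identities in (\mref{it:compro}) are all zero, so the hypothesis $\beta_{12}+\alpha_{21} = \lambda_1$ of that theorem fails whenever $\lambda\neq 0$. One can still salvage this route by first adding the trivial identity $\alpha\bigl(P(uv)+\tilde{P}(uv)+\lambda uv\bigr)=0$ to each compatibility identity (with the two $\alpha$-parameters chosen to sum to $\lambda$) and only then applying Theorem~\mref{thm:genfij}; the scalar-shift reduction $Q_{a,b} = (a-b)P - b\lambda\,\id$ is cleaner, avoids any such splitting, and lands the advertised weight with no additional bookkeeping.
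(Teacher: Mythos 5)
Your part (a) is essentially the paper's own argument: expand $P(u)\tilde{P}(v)=-\lambda P(u)v-P(u)P(v)$, reduce the right-hand side to the same expression via the \wmrbi, and repeat for the second identity; your caveat that the second identity is not a formal consequence of the first without commutativity matches the paper's choice to rerun the argument rather than appeal to symmetry. For part (b) you take a genuinely different, and correct, route. The paper feeds the identities of part (a) into Theorem~\ref{thm:genfij} and resolves exactly the obstruction you flag by the very salvage you describe: it records the second compatibility with $\alpha_{21}=\beta_{21}=\lambda$ and $\gamma_{21}=\lambda^2-\kappa$ (absorbing the trivial identity $\lambda\big(P(uv)+\tilde{P}(uv)+\lambda uv\big)=0$), so that $\beta_{12}+\alpha_{21}=\alpha_{12}+\beta_{21}=\lambda$ and the theorem yields $a^2\kappa+b^2\kappa+ab(\lambda^2-2\kappa)=(a-b)^2\kappa+ab\lambda^2$. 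You instead write $Q_{a,b}=(a-b)P-b\lambda\,\id$, invoke Proposition~\ref{prop:exte}(\ref{it:lambdap}) for the rescaling, and prove a scalar-shift lemma asserting that $\phi+\mu\,\id$ has weight $(\lambda'-2\mu,\,\kappa'+\mu^2-\lambda'\mu)$; I verified this lemma and your final arithmetic, including the degenerate case $a=b$ where $(a-b)P=0$ is an \wmrbo of weight $(0,0)$ and the shift still lands on $(2b\lambda,b^2\lambda^2)$ as claimed. Your route is more self-contained (it needs neither part (a) nor Theorem~\ref{thm:genfij}) at the cost of one short auxiliary lemma not in the paper; the paper's route buys a demonstration that the compatibility framework of Theorem~\ref{thm:genfij} subsumes the pair $(P,\tilde{P})$, which is the point of stating the corollary where it is.
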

 \vsb
 \begin{proof}
 	(\mref{it:compro})
 	For any $u,v\in R$, we have
 	$$		P(u)\tilde{P}(v)=P(u)(-\lambda v-P(v))
 	=-\lambda P(u)v-P(u)P(v).
 	$$
 	On the other hand,
 	\begin{eqnarray*}
 		P(u\tilde{P}(v))+\tilde{P}(P(u)v)-\kappa uv
 		&=&P(-\lambda uv-uP(v))-\lambda P(u)v-P(P(u)v)-\kappa uv\\
 		&=&-\lambda P(uv)-P(uP(v))-P(P(u)v)-\kappa uv-\lambda P(u)v\\
 		&=&-P(u)P(v)-\lambda P(u)v,
 	\end{eqnarray*}
 	proving the first equation. The same argument proves the second compatibility condition.
 	\smallskip
 	
\noindent
(\mref{it:combination})
 	Taking $P_1=P$ and $P_2=\tilde{P_1}=\tilde{P}$ in Theorem~\mref{thm:genfij}, we have $\lambda_1=\lambda_2=\lambda$, and $\kappa_1=\kappa_2=\kappa$ by Proposition~\mref{prop:exte}~\meqref{it:tildep}. Taking $\alpha_{12}=\beta_{12}=0,\alpha_{21}=\beta_{21}=\lambda$, and $\gamma_{12}=-\kappa,\gamma_{21}=\lambda^2-\kappa$, then Eq.~\meqref{eq:comij} follows from Item~ (\mref{it:compro}). Thus
 	$\alpha_{12}+\beta_{21}=\lambda=\beta_{12}+\alpha_{21}.$
 	Then by Theorem~\mref{thm:genfij}, $Q$ is an \wmrbo of weight $(\lambda(a+b),ab\lambda^2+(a-b)^2\kappa)$.
 \end{proof}
 \vsb

Specialized to Rota-Baxter operators, Corollary~\mref{coro:genf2} gives

\begin{coro}
	Let $P$ be a Rota-Baxter operator of weight $\lambda$ on $R$ and let $\tilde{P}=-\lambda \id -P$.
	\begin{enumerate}
		\item
		For $a,b\in \bfk$, the operator $Q:=Q_{a,b}=aP+b\tilde{P}$ is an \wmrbo of weight $(\lambda(a+b),ab\lambda^2)$.
		\mlabel{it:genf1}
		\item
		For $\mu, \kappa\in \bfk$, let $a, b\in \bfk$ be the roots of $t^2-\mu t+\kappa$. Then $Q:=aP+b\tilde{P}$ is an \wmrbo of weight $(\mu\lambda,\kappa\lambda^2)$. In particular, if $P$ is a Rota-Baxter operator of weight 1, then $Q$ is an \wmrbo of weight $(\mu, \kappa)$.
		\mlabel{it:genf2}
	\end{enumerate}
	\mlabel{co:genf}
\end{coro}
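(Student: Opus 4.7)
The plan is to derive this corollary as a direct specialization of Corollary~\mref{coro:genf2}. Since a Rota-Baxter operator of weight $\lambda$ is, by definition, an \wmrbo of weight $(\lambda,0)$ (that is, $\kappa=0$ in Eq.~\meqref{eq:grb}), all the hypotheses of Corollary~\mref{coro:genf2} are automatically satisfied.

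First I would establish part~\meqref{it:genf1}. Applying Corollary~\mref{coro:genf2}\meqref{it:combination} with $\kappa=0$, the operator $Q = aP + b\tilde{P}$ is an \wmrbo of weight
\[
\bigl(\lambda(a+b),\ ab\lambda^2 + (a-b)^2\cdot 0\bigr) = \bigl(\lambda(a+b),\ ab\lambda^2\bigr),
\]
which is exactly the claim.

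Next I would deduce part~\meqref{it:genf2} by Vieta's formulas. If $a,b$ are the roots of $t^2-\mu t+\kappa$, then $a+b=\mu$ and $ab=\kappa$. Substituting into part~\meqref{it:genf1} immediately yields that $Q=aP+b\tilde{P}$ is an \wmrbo of weight $(\mu\lambda,\kappa\lambda^2)$. Setting $\lambda=1$ gives the final assertion with weight $(\mu,\kappa)$.

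There is essentially no obstacle here: the whole content has been absorbed into Theorem~\mref{thm:genfij} and Corollary~\mref{coro:genf2}, and what remains is a specialization plus an application of Vieta's formulas. The only point worth double-checking is that when $\bfk$ is not algebraically closed the roots $a,b$ may fail to lie in $\bfk$, but the statement implicitly assumes $a,b\in\bfk$, so no additional argument is required.
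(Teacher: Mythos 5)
Your proposal is correct and follows exactly the paper's own argument: part~\meqref{it:genf1} is obtained by setting $\kappa=0$ in Corollary~\mref{coro:genf2}~\meqref{it:combination}, and part~\meqref{it:genf2} then follows from Vieta's formulas $a+b=\mu$, $ab=\kappa$. Your closing caveat about the roots needing to lie in $\bfk$ is a fair observation, but as you note the statement already assumes $a,b\in\bfk$, so nothing further is required.
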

\vsb
\begin{proof}
\meqref{it:genf1} follows from taking $\kappa=0$ in Corollary~\mref{coro:genf2}~\meqref{it:combination}.
\smallskip
	
	\noindent
	\meqref{it:genf2} follows from the fact that the roots $a, b$ of $t^2-\mu t+\kappa$ are determined by $t^2-(a+b)t+ab=0$.
\end{proof}

We now give some applications of the general results that provide a large class of examples of \wmrbos.
The following \wmrbo can be viewed as a variation of the $q$-integral operator (or Jackson integral operator)
\vsb
$$I_q(f)(x):=\sum_{k=1}^\infty f(q^kx)$$
\vsa
on the polynomial algebra $\bfk[x]$. See, for example, ~\mcite{Gub,Ro2}.

\begin{exam}\mlabel{exam:polynomial}Suppose that  $q$ is not a root of unity in $\bfk$.
 Define linear operators $P_+$  and $P_-$ on $\bfk[x]$ by
 \vsb
$$P_+(x^n)=\frac{x^n}{1-q^n}\quad\text{and}\quad P_-(x^n)=\frac{-q^nx^n}{1-q^n},\,n\geq 0.$$
By \cite[Example 1.1.8]{Gub},  $P_+$ and $P_-$ are Rota-Baxter operators of weight $-1$. Also $P_-=\id-P_+=\tilde{P}$. Let $Q:=aP_++bP_-$. Then
 \vsb
 $$Q(x^n) = \frac{(a-q^nb)x^n}{1-q^n}, \quad n\geq 0.$$
By Corollary~\mref{co:genf},  $Q$ is an \wmrbo of weight $(-(a+b),ab)$.
\end{exam}

\vsb
\begin{exam} On the algebra of Laurent series $R=\CC[t^{-1},t]]$, define the linear operators
\vsa
$$P_+\Big(\sum_{n=k}^\infty a_nt^n\Big)=\sum_{n=0}^\infty a_nt^n\quad\text{and}\quad P_-\Big(\sum_{n=k}^\infty a_nt^n\Big)=\sum_{n=k}^{-1} a_nt^n.$$
Then $P_+$ and $P_-$ are Rota-Baxter operators of weight $-1$, playing a critical role in renormalization of quantum field theory\,\mcite{CK00}. Further, $P_-=\id-P=\tilde{P}$.
Let $Q:=aP_++bP_-$. Then
\vsa
$$Q\Big(\sum_{n=k}^\infty a_nt^n\Big)=a\sum_{n=0}^\infty a_nt^n+b\sum_{n=k}^{-1}a_nt^n.$$
By Corollary~\mref{co:genf}, $Q$ is an \wmrbo of weight $(-(a+b),ab)$ on $\CC[t^{-1},t]]$.
\end{exam}

The following example of Rota-Baxter operators arises from the study of umbral calculus and Stirling numbers. See \mcite{Gum,Gst} for example.	
\begin{exam}
Consider the (divided) falling factorials
$$ t_k:= \frac{x(x-1)\cdots (x-k+1)}{k!}\in \bfk[x],\  k\geq 0.$$
Then the linear operator $P$ on $\bfk[x]$ defined by
$$ P: \bfk[x]\to \bfk[x], \quad t_k\mapsto t_{k+1},\ k\geq 0,$$
is a Rota-Baxter operator of weight $1$. Further
$\tilde{P}=-\id -P$ is given by
$$ \tilde{P}(t_k)=-t_k-t_{k+1}=-t_k\frac{x+1}{k+1}.$$
For $a, b\in \bfk$, $Q_{a,b}:=aP+b\tilde{P}$ is given by
\begin{equation} Q_{a,b}(t_k):=(aP+b\tilde{P})(t_k)=t_k\frac{(a-b)x-ak-b}{k+1}.
\mlabel{eq:qabop}
\end{equation}
By Corollary~\mref{co:genf}, $Q$ is an \wmrbo of weight $(a+b,ab)$. In particular, for given $\mu, \kappa\in \bfk$, let $a, b$ be the roots of $x^2-\mu x+\kappa$. So $a,b=\frac{\mu\pm\sqrt{\mu^2-4\kappa}}{2}$. Then $Q_{a,b}$ is an \wmrbo of weight $(\mu,\kappa)$.
\end{exam}

 \vsd
\section{Free commutative \wmrbs on a commutative algebra}
\mlabel{sec:FreeCommGRBA}

This section constructs the free commutative \wmrb on a commutative algebra by a generalized quasi-shuffle product or mixable shuffle product. We also provide a combinatorial interpretation in terms of colored Delannoy paths.

\vsb
\subsection{The construction of free commutative \wmrbs}
\mlabel{subsec:FreeCommGRBA}
We first define the definition.
\vsb
\begin{defn}
Let $A$ be a commutative algebra and let $\lambda,\kappa\in \bfk$. A {\bf free commutative \wmrb of weight $(\lambda,\kappa)$ on $A$} is a commutative \wmrb  $F_{e}(A)$ of weight $(\lambda,\kappa)$ together with an algebra homomorphism $j_{A}:A \to F_{e}(A)$ satisfying the following universal property: for any commutative \wmrb $(R,P)$ of weight $(\lambda,\kappa)$ and any algebra homomorphism $f:A\to R$, there is a unique \wmrb homomorphism $\free{f}:F_{e}(A)\to R$ such that $f=\free{f}\circ j_A$, that is, the following diagram
\vsb
$$ \xymatrix{ A \ar[rr]^{j_A}\ar[drr]^{f} && F_e(A) \ar[d]_{\free{f}} \\
	&& R}
\vsc
	$$
commutes.
\mlabel{de:free}
\end{defn}

For a given commutative $\bfk$-algebra $A$ with identity$1_A$ (or simply $\bfone$),
we next construct the free \wmrb on  $A$.
Consider the module
\begin{equation*}
\sha_e(A):=A\oplus A^{\ot 2}\oplus\cdots=\bigoplus_{n\geq 1}A^{\ot n}
\end{equation*}
spanned by the tensor powers of $A$. We first  define a $\bfk$-linear operator $\pg$ on $\sha_e(A)$ to be the right-shift operator:
\vsb
\begin{equation}
\pg:\sha_e(A)\to \sha_e(A)\ , \ \fraka\mapsto1_A\ot \fraka\in A^{\ot (n+1)}, \quad \quad \fraka\in A^{\ot n}.
\end{equation}
We next define a product $\dg$ on $\sha^+_e(A)$. For this purpose, we only need to define the product of two pure tensors and then extend the product by bilinearity.

For $\fraka\in A^{\ot (m+1)}$ where $m\geq 1$, denote $\fraka=a_0\ot \fraka'$ with $\fraka'\in A^{\ot m}$. Let $\fraka=a_0\ot \fraka'\in  A^{\ot m+1}$ and $\frakb=b_0\ot \frakb'\in  A^{\ot n+1} $ with $m,n\geq 0$, where $\fraka'=a_1\ot \cdots \ot a_m\in A^{\ot m}$ if $m\geq 1$ and $\frakb'=b_1\ot \cdots \ot b_n\in A^{\ot n}$ with $n\geq 1$. The product $\fraka \dg \frakb$  is  defined by a recursion.
\begin{equation}
\fraka\dg\frakb = \left \{\begin{array}{lll} a_0b_0, & m=n=0, \\
		a_0b_0\ot\frakb', & m=0, n\geq 1, \\
		a_0b_0\ot\fraka', & m\geq 1, n=0,\\
		a_0b_0\ot\Big(\fraka'\dg(1_A\ot\frakb') + (1_A\ot\fraka')\dg\frakb' + \lambda (\fraka'\dg\frakb')\Big)& \\
		+\kappa a_0b_0(\fraka'\dg\frakb'), & m, n\geq 1.
	\end{array} \right .
\mlabel{eq:dfndg}
\end{equation}

\begin{exam}Let $\fraka=a_0\ot a_1\in A^{\ot 2}$ and $\frakb=b_0\ot b_1\ot b_2\in A^{\ot 3}$. Then
\begin{eqnarray*}
\fraka\dg\frakb
&=&a_0b_0\ot\bigg(a_1\dg(1_A\ot b_1\ot b_2)+(1_A\ot a_1)\dg (b_1\ot b_2)+\lambda\big( a_1\dg (b_1\ot b_2)\big)\bigg)\\
&&+\kappa a_0b_0\big(a_1\dg (b_1\ot b_2)\big)\\
&=&a_0b_0\ot\bigg(a_1\ot b_1\ot b_2+b_1\ot\Big(a_1\dg(1_A\ot b_2)+(1_A\ot a_1)\dg b_2+\lambda a_1b_2\Big)+\kappa b_1a_1b_2\\
&&+\lambda a_1b_1\ot b_2\bigg)+\kappa a_0b_0a_1b_1\ot b_2\\
&=&a_0b_0\ot\bigg(a_1\ot b_1\ot b_2+b_1\ot a_1\ot b_2+b_1\ot b_2\ot a_1+\lambda b_1\ot a_1b_2+\lambda a_1b_1\ot b_2+\kappa b_1a_1b_2\bigg)\\&&+\kappa a_0b_0a_1b_1\ot b_2.
\end{eqnarray*}
\end{exam}
Let
 \vsb
\begin{equation}\mlabel{eq:j_A}
	j_A: A\to\sha_e(A)\ ,\   a\mapsto a, \quad a\in A,
\end{equation}
be the natural embedding. Then we have
\begin{equation*}
j_A(ab)=ab=a\dg b=j_A(a)\dg j_A(b)\ ,\  \quad a,b \in A.
\end{equation*}
Hence $j_A$ is an algebra homomorphism.

\begin{theorem}
Let $A$ be a commutative algebra and $\lambda, \kappa\in \bfk$. Let $\sha_e(A), \pg, j_A $ and $\dg$ be defined as above.
\begin{enumerate}
\item
The triple $(\sha_e(A), \dg, \pg)$ is a commutative \wmrb of weight $(\lambda, \kappa)$;
\mlabel{it:cgrba}
\vsb
\item
The  quadruple $(\sha_e(A), \dg, \pg, j_A)$ is the free commutative \wmrb of weight $(\lambda, \kappa)$ on $A$.
\mlabel{it:fcgrba}
\end{enumerate}
\mlabel{thm:ab}
\end{theorem}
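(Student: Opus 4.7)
The plan is to verify part~\meqref{it:cgrba} directly from the recursion~\meqref{eq:dfndg}, and then obtain part~\meqref{it:fcgrba} by constructing the universal homomorphism out of $\sha_e(A)$ one tensor slot at a time.

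For part~\meqref{it:cgrba} there are three items to check: commutativity and associativity of $\dg$, and the \wmrbi for $\pg$. Commutativity is an induction on $m + n$: the first three clauses of \meqref{eq:dfndg} are pairwise symmetric after using commutativity of $A$, and in the generic clause the summands on the right are swapped in pairs under $(a_0,\fraka') \leftrightarrow (b_0,\frakb')$ once the induction hypothesis is applied to $\fraka' \dg \frakb'$ and its shifted variants. The \wmrbi is in fact already built into the recursion: for any $\fraka, \frakb \in \sha_e(A)$ the elements $\pg(\fraka) = 1_A \ot \fraka$ and $\pg(\frakb) = 1_A \ot \frakb$ each have tensor degree at least two, so the generic clause applies to $\pg(\fraka) \dg \pg(\frakb)$ and, after identifying $1_A \ot z$ with $\pg(z)$, produces exactly
\begin{equation*}
\pg(\fraka) \dg \pg(\frakb) = \pg(\fraka \dg \pg(\frakb)) + \pg(\pg(\fraka) \dg \frakb) + \lambda\, \pg(\fraka \dg \frakb) + \kappa (\fraka \dg \frakb),
\end{equation*}
which is \meqref{eq:grb} with $P = \pg$.

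Associativity is the main technical point. I would prove $(\fraka \dg \frakb) \dg \frakc = \fraka \dg (\frakb \dg \frakc)$ for $\fraka \in A^{\ot (m+1)}$, $\frakb \in A^{\ot (n+1)}$, $\frakc \in A^{\ot (p+1)}$ by induction on $m + n + p$. The cases where at least one of $m, n, p$ vanishes collapse to the first three clauses of \meqref{eq:dfndg} and the associativity of $A$. The core case $m, n, p \geq 1$ is handled by expanding both sides with the fourth clause, factoring $a_0 b_0 c_0$ out of the outermost tensor slot, and matching the resulting nested expressions term by term via the induction hypothesis and the already-established commutativity. The structure of this computation parallels the associativity arguments for the quasi-shuffle product in~\mcite{GK1} and for the modified shuffle product in~\mcite{ZGG19}, with the new $\kappa$-terms fitting into the same inductive template.

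For part~\meqref{it:fcgrba}, given a commutative \wmrb $(R, P)$ of weight $(\lambda, \kappa)$ and an algebra homomorphism $f : A \to R$, define $\free{f} : \sha_e(A) \to R$ by $\free{f}(a) := f(a)$ for $a \in A$ and, inductively, $\free{f}(a_0 \ot \fraka') := f(a_0)\, P\bigl(\free{f}(\fraka')\bigr)$, so that
\begin{equation*}
\free{f}(a_1 \ot \cdots \ot a_n) = f(a_1) P\bigl(f(a_2) P(\cdots P(f(a_n))\cdots)\bigr).
\end{equation*}
Then $\free{f} \circ j_A = f$ is immediate, and $\free{f} \circ \pg = P \circ \free{f}$ follows from $\pg(\fraka) = 1_A \ot \fraka$ together with $f(1_A) = 1_R$. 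Multiplicativity $\free{f}(\fraka \dg \frakb) = \free{f}(\fraka) \free{f}(\frakb)$ is proved by induction on $m + n$ in \meqref{eq:dfndg}: the easy clauses match because $f$ is multiplicative, and in the generic clause, after pulling $f(a_0 b_0)$ outside, the four summands match the four terms of the \wmrbi~\meqref{eq:grb} for $P$ applied to $P(\free{f}(\fraka'))$ and $P(\free{f}(\frakb'))$. Uniqueness follows from the observation that $\sha_e(A)$ is generated, as an algebra together with the operator $\pg$, by $j_A(A)$, so any \wmrb homomorphism extending $f$ must agree with $\free{f}$ on each $A^{\ot n}$. The hardest part will be the generic case of associativity in part~\meqref{it:cgrba}: unfolding both bracketings with the fourth clause of \meqref{eq:dfndg} produces a large collection of nested terms mixing $\lambda$ and $\kappa$, and organizing them so that the induction hypothesis closes up is the technical heart of the theorem.
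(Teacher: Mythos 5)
Your proposal is correct and follows essentially the same route as the paper: commutativity and associativity of $\dg$ by induction on the total tensor degree with the case split on how many of the lengths vanish, the \wmrbi for $\pg$ read off directly from the generic clause of the recursion \meqref{eq:dfndg} applied to $1_A\ot\fraka$ and $1_A\ot\frakb$, and the universal map defined by the same recursion $\free{f}(a_0\ot\fraka')=f(a_0)P(\free{f}(\fraka'))$ with multiplicativity checked by induction against the \wmrbi in $R$. The only content you defer is the explicit sixteen-term matching in the generic associativity case, which the paper carries out in full (recording the matching as a permutation in $\Sigma_{16}$), but your description of how that computation closes under the induction hypothesis is accurate.
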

\vsd
\begin{proof}
(\mref{it:cgrba})
First we prove the commutativity of $\dg$. For pure tensors $\fraka=a_0\ot a_1\ot\cdots\ot a_m$ and $\frakb=b_0\ot b_1\ot\cdots\ot b_n \in \sha_e(A)$ with $m,n\geq 0$, we will verify the equality
\begin{equation}
\fraka\dg\frakb=\frakb\dg\fraka
\mlabel{eq:com}
\end{equation}
by induction on the sum $s:=m+n\geq 0$. When $s=0$, then $m=n=0$ and so $\fraka=a_0,\frakb=b_0$. By the definition of $\dg$, we obtain
$$\fraka\dg\frakb=a_0\dg b_0=a_0b_0=b_0a_0=b_0\dg a_0=\frakb\dg\fraka.$$

For a given $k\geq 0$, assume that Eq.~\meqref{eq:com} holds for $s=m+n\le k$. Consider the case when $s=m+n=k+1\geq 1$. If $m=0$ or $n=0$, we write $\fraka=a_0,\frakb=b_0\ot\frakb'$, or $\fraka=a_0\ot \fraka',\frakb=b_0$. Then $$\fraka\dg\frakb=a_0b_0\ot\frakb'=b_0a_0\ot \frakb'=\frakb\dg\fraka \text{ or }
\fraka\dg\frakb=a_0b_0\ot\fraka'=b_0a_0\ot \fraka'=\frakb\dg\fraka.$$
Hence the commutativity holds in this case.
If $m,n\geq 1$, write $\fraka=a_0\ot\fraka'$ with $\fraka'\in A^{\ot (m-1)}$ and $\frakb=b_0\ot\frakb'$ with $\frakb'\in A^{\ot (n-1)}$. Then applying the induction hypothesis, we obtain
\begin{align*}
\fraka\dg\frakb
=& a_0b_0\ot\big(\fraka'\dg(1_A\ot\frakb')\big)+a_0b_0\ot\big((1_A\ot\fraka')\dg\frakb'\big)+\lambda a_0b_0\ot(\fraka'\dg\frakb')+\kappa a_0b_0(\fraka'\dg\frakb')\\
=& b_0a_0\ot\big((1_A\ot\frakb')\dg\fraka'\big)+b_0a_0\ot\big(\frakb'\dg(1_A\ot\fraka')\big)+\lambda b_0a_0\ot(\frakb'\dg\fraka')+\kappa b_0a_0(\frakb'\dg\fraka')\\
=& \frakb\dg\fraka.
\end{align*}
This completes the inductive proof of the commutativity of $\dg$.

Next we verify the associativity of $\dg$. For this we only need to check that, for all pure tensors $\fraka, \frakb, \frakc\in \sha_e(A)$,  the following associativity holds.
\begin{equation}
(\fraka\dg\frakb)\dg\frakc=\fraka\dg(\frakb\dg\frakc).
\mlabel{eq:assdg}
\end{equation}
Let $\fraka=a_0\ot a_1\ot \cdots\ot a_m\in A^{\ot (m+1)},\frakb=b_0\ot b_1\ot \cdots\ot b_n\in A^{\ot (n+1)},\frakc=c_0\ot c_1\ot \cdots\ot c_r\in A^{\ot (r+1)}$ with $m,n,r\geq 0$. We will apply induction on the sum $s:=m+n+r\geq 0$ to verify Eq.~\meqref{eq:assdg}. When $m+n+r=0$, that is, $m=n=r=0$ and so $\fraka=a_0,\frakb=b_0,\frakc=c_0$. Then Eq.~\meqref{eq:assdg} follows from Eq.~(\mref{eq:dfndg}) and the associativity of the algebra $A$:
\begin{equation*}
(\fraka\dg\frakb)\dg\frakc=(a_0b_0)\dg c_0=(a_0b_0)c_0=a_0(b_0c_0)=a_0\dg(b_0c_0)=\fraka\dg(\frakb\dg\frakc).
\end{equation*}

For a fixed $k\geq 0$,  assume that Eq.~(\mref{eq:assdg}) holds for $m+n+r\le k$ and consider $m+n+r=k+1\geq 1$. So at most two of $m,n,r$ are zero. We consider the three cases of exactly two, one or none of $m, n, r$ being zero.

{\bf Case 1: Two of $m,n,r$ are $0$}. Without loss of generality, we might suppose $m=n=0$. Then $\fraka=a_0, \frakb=b_0, \frakc=c_0\ot\frakc'$ with $\frakc'\in A^{\ot r}, r\geq 1$. By  Eq.~(\mref{eq:dfndg}), we have
\begin{align*}
(\fraka\dg\frakb)\dg\frakc
={}& (a_0b_0)\dg(c_0\ot\frakc')\\
={}& (a_0b_0)c_0\ot\frakc'\\
={}& a_0(b_0c_0)\ot\frakc'\\
={}& a_0\dg(b_0c_0\ot\frakc')\\
={}& a_0\dg\big(b_0\dg(c_0\ot\frakc')\big)\\
={}& \fraka\dg(\frakb\dg\frakc),
\end{align*}
as desired.

{\bf Case 2: Exactly one of $m,n,r$ is $0$}. Without loss of generality, we might suppose $m=0$. Then $\fraka=a_0, \frakb=b_0\ot\frakb'$ with $\frakb'\in A^{\ot n}, n\geq 1$,  and $\frakc=c_0\ot\frakc'$ with $\frakc'\in A^{\ot r}, r\geq 1$. By Eq.~(\mref{eq:dfndg}), we also have the desired associativity:
\begin{align*}
(\fraka\dg\frakb)\dg\frakc
={}& \big(a_0\dg (b_0\ot\frakb')\big)\dg(c_0\ot\frakc')\\
={}& (a_0b_0\ot\frakb')\dg(c_0\ot\frakc')\\
={}& (a_0b_0)c_0\ot\big(\frakb'\dg(1_A\ot\frakc')\big)+(a_0b_0)c_0\ot\big((1_A\ot\frakb')\dg \frakc'\big)\\
&+\lambda(a_0b_0)c_0\ot(\frakb'\dg \frakc')+\kappa(a_0b_0)c_0(\frakb'\dg \frakc')\\
={}& a_0(b_0c_0)\ot\big(\frakb'\dg(1_A\ot\frakc')\big)+a_0(b_0c_0)\ot\big((1_A\ot\frakb')\dg \frakc'\big)\\
&+\lambda a_0(b_0c_0)\ot(\frakb'\dg \frakc')+\kappa a_0(b_0c_0)(\frakb'\dg \frakc')\\
={}& a_0\dg\Big(b_0c_0\ot\big(\frakb'\dg(1_A\ot\frakc')\big)+b_0c_0\ot\big((1_A\ot\frakb')\dg \frakc'\big)\\
&+\lambda b_0c_0\ot(\frakb'\dg \frakc')+\kappa b_0c_0(\frakb'\dg \frakc')\Big)\\
={}& \fraka \dg(\frakb \dg \frakc).
\end{align*}

{\bf Case 3: None of $m,n,r$ is zero}. We denote $\fraka=a_0\ot\fraka',\frakb=b_0\ot\frakb',\frakc=c_0\ot\frakc'$ with $\fraka'\in A^{\ot m},\frakb'\in A^{\ot n},\frakc'\in A^{\ot r}$. Then by  Eq.~(\mref{eq:dfndg}), we obtain
\begin{align*}
&(\fraka \dg \frakb)\dg \frakc\\
={}& \Big((a_0\ot\fraka')\dg(b_0\ot\frakb')\Big)\dg(c_0\ot\frakc')\\
={}& \Big(a_0b_0\ot\big(\fraka'\dg(1_A\ot\frakb')\big)
+a_0b_0\ot\big((1_A\ot\fraka')\dg \frakb'\big)\\
&+\lambda a_0b_0\ot(\fraka'\dg \frakb')+\kappa a_0b_0(\fraka'\dg \frakb')\Big)\dg(c_0\ot\frakc')\\
={}& \Big(a_0b_0\ot\big(\fraka'\dg(1_A\ot\frakb')\big)\Big)\dg(c_0\ot\frakc')+\Big(a_0b_0\ot\big((1_A\ot\fraka')\dg \frakb'\big)\Big)\dg(c_0\ot\frakc')\\
&+\lambda \Big(a_0b_0\ot(\fraka'\dg \frakb')\Big)\dg(c_0\ot\frakc')+\kappa\Big(a_0b_0(\fraka'\dg \frakb')\Big)\dg(c_0\ot\frakc')\\
={}& a_0b_0c_0\ot\Big(\underline{\big(\fraka'\dg(1_A\ot\frakb')\big)\dg(1_A\ot\frakc')}\Big)+a_0b_0c_0\ot\bigg(\Big(1_A\ot\big(\fraka'\dg(1_A\ot\frakb')\big)\Big)\dg \frakc'\bigg)\\
&+\lambda a_0b_0c_0\ot\Big(\big(\fraka'\dg(1_A\ot\frakb')\big)\dg \frakc'\Big)+\kappa a_0b_0c_0\Big(\big(\fraka'\dg(1_A\ot\frakb')\big)\dg \frakc'\Big)\\
&+a_0b_0c_0\ot\Big(\big((1_A\ot\fraka')\dg \frakb'\big)\dg(1_A\ot\frakc')\Big)+a_0b_0c_0\ot\bigg(\Big(1_A\ot\big((1_A\ot\fraka')\dg \frakb'\big)\Big)\dg \frakc'\bigg)\\
&+\lambda a_0b_0c_0\ot\Big(\big((1_A\ot\fraka')\dg \frakb'\big)\dg \frakc'\Big)+\kappa a_0b_0c_0\Big(\big((1_A\ot\fraka')\dg \frakb'\big)\dg \frakc'\Big)\\
&+\lambda a_0b_0c_0\ot\Big((\fraka' \dg \frakb')\dg(1_A\ot\frakc')\Big)+\lambda a_0b_0c_0\ot\Big(\big(1_A\ot(\fraka' \dg \frakb')\big)\dg \frakc'\Big)\\
&+\lambda^2 a_0b_0c_0\ot\Big((\fraka'\dg \frakb')\dg \frakc'\Big)+\lambda\kappa a_0b_0c_0\Big((\fraka'\dg \frakb')\dg \frakc'\Big)\\
&+\kappa a_0b_0c_0\Big((\fraka'\dg \frakb')\dg (1_A\ot\frakc')\Big).
\end{align*}
Applying the induction hypothesis to the underlined factor and then using Eq.~\meqref{eq:dfndg}, we derive
\begin{align*}
&(\fraka \dg \frakb)\dg \frakc\\
={}& a_0b_0c_0\ot\bigg(\fraka'\dg\Big(1_A\ot\big(\frakb'\dg(1_A\ot\frakc')\big)\Big)\bigg)+a_0b_0c_0\ot\bigg(\fraka'\dg\Big(1_A\ot\big((1_A\ot\frakb')\dg \frakc'\big)\Big)\bigg)\\
&+\lambda a_0b_0c_0\ot\Big(\fraka'\dg\big(1_A\ot(\frakb'\dg \frakc')\big)\Big)+\kappa a_0b_0c_0\ot\Big(\fraka'\dg(\frakb'\dg\frakc')\Big)\\
&+a_0b_0c_0\ot\bigg(\Big(1_A\ot\big(\fraka'\dg(1_A\ot\frakb')\big)\Big)\dg \frakc'\bigg)+\lambda a_0b_0c_0\ot\Big(\big(\fraka'\dg(1_A\ot\frakb')\big)\dg \frakc'\Big)\\
&+\kappa a_0b_0c_0\Big(\big(\fraka'\dg(1_A\ot\frakb')\big)\dg \frakc'\Big)+a_0b_0c_0\ot\Big(\big((1_A\ot\fraka')\dg \frakb'\big)\dg(1_A\ot\frakc')\Big)\\
&+a_0b_0c_0\ot\bigg(\Big(1_A\ot\big((1_A\ot\fraka')\dg \frakb'\big)\Big)\dg \frakc'\bigg)+\lambda a_0b_0c_0\ot\Big(\big((1_A\ot\fraka')\dg \frakb'\big)\dg \frakc'\Big)\\
&+\kappa a_0b_0c_0\Big(\big((1_A\ot\fraka')\dg \frakb'\big)\dg \frakc'\Big)+\lambda a_0b_0c_0\ot\Big((\fraka' \dg \frakb')\dg(1_A\ot\frakc')\Big)\\
&+\lambda a_0b_0c_0\ot\Big(\big(1_A\ot(\fraka' \dg \frakb')\big)\dg \frakc'\Big)+\lambda^2 a_0b_0c_0\ot\Big((\fraka'\dg \frakb')\dg \frakc'\Big)\\
&+\lambda\kappa a_0b_0c_0\Big((\fraka'\dg \frakb')\dg \frakc'\Big)+\kappa a_0b_0c_0\Big((\fraka'\dg \frakb')\dg (1_A\ot\frakc')\Big).
\vsb
\end{align*}

On the other hand, we have
\vsb
\begin{align*}
&\fraka\dg(\frakb\dg\frakc)\\
={}& (a_0\ot\fraka')\dg\Big((b_0\ot\frakb')\dg(c_0\ot\frakc')\Big)\\
={}& (a_0\ot\fraka')\dg\Big(b_0c_0\ot\big(\frakb'\dg(1_A\ot\frakc')\big)\Big)+(a_0\ot\fraka')\dg\Big(b_0c_0\ot\big((1_A\ot\frakb')\dg \frakc'\big)\Big)\\
&+\lambda(a_0\ot\fraka')\dg\big(b_0c_0\ot (\frakb'\dg \frakc')\big)+\kappa(a_0\ot\fraka')\dg\big(b_0c_0(\frakb'\dg \frakc')\big)\\
={}& a_0b_0c_0\ot\bigg(\fraka'\dg\Big(1_A\ot\big(\frakb'\dg(1_A\ot\frakc')\big)\Big)\bigg)+a_0b_0c_0\ot\Big((1_A\ot\fraka')\dg\big(\frakb'\dg(1_A\ot\frakc')\big)\Big)\\
&+\lambda a_0b_0c_0\ot\Big(\fraka'\dg\big(\frakb'\dg(1_A\ot\frakc')\big)\Big)+\kappa a_0b_0c_0\Big(\fraka'\dg\big(\frakb'\dg(1_A\ot\frakc')\big)\Big)\\
&+a_0b_0c_0\ot\bigg(\fraka'\dg\Big(1_A\ot\big((1_A\ot\frakb')\dg \frakc'\big)\Big)\bigg)+a_0b_0c_0\ot\Big(\underline{(1_A\ot\fraka')\dg\big((1_A\ot\frakb')\dg \frakc'\big)}\Big)\\
&+\lambda a_0b_0c_0\ot\Big(\fraka'\dg\big((1_A\ot\frakb')\dg \frakc'\big)\Big)+\kappa a_0b_0c_0\Big(\fraka'\dg\big((1_A\ot\frakb')\dg \frakc'\big)\Big)\\
&+\lambda a_0b_0c_0\ot\Big(\fraka'\dg\big(1_A\ot(\frakb'\dg \frakc')\big)\Big)+\lambda a_0b_0c_0\ot\Big((1_A\ot\fraka')\dg (\frakb'\dg \frakc')\Big)\\
&+\lambda^2 a_0b_0c_0\ot\Big(\fraka'\dg (\frakb'\dg \frakc')\Big)+\lambda\kappa a_0b_0c_0\Big(\fraka'\dg(\frakb'\dg \frakc')\Big)\\
&+\kappa a_0b_0c_0\Big((1_A\ot\fraka')\dg(\frakb'\dg\frakc')\Big).
\end{align*}
 \vsb
Applying the induction hypothesis to the underlined factor and then using Eq.~\meqref{eq:dfndg}, we obtain
 \vsb
 \begin{align*}
&\fraka \dg (\frakb \dg \frakc)\\
={}& a_0b_0c_0\ot\bigg(\fraka'\dg\Big(1_A\ot\big(\frakb'\dg(1_A\ot\frakc')\big)\Big)\bigg)+a_0b_0c_0\ot\Big((1_A\ot\fraka')\dg\big(\frakb'\dg(1_A\ot\frakc')\big)\Big)\\
&+\lambda a_0b_0c_0\ot\Big(\fraka'\dg\big(\frakb'\dg(1_A\ot\frakc')\big)\Big)+\kappa a_0b_0c_0\Big(\fraka'\dg\big(\frakb'\dg(1_A\ot\frakc')\big)\Big)\\
&+a_0b_0c_0\ot\bigg(\fraka'\dg\Big(1_A\ot\big((1_A\ot\frakb')\dg \frakc'\big)\Big)\bigg)+a_0b_0c_0\ot\bigg(\Big(1_A\ot\big(\fraka'\dg(1_A\ot\frakb')\big)\Big)\dg \frakc'\bigg)\\
&+a_0b_0c_0\ot\bigg(\Big(1_A\ot\big((1_A\ot\fraka')\dg \frakb'\big)\Big)\dg \frakc'\bigg)+\lambda a_0b_0c_0\ot\Big(\big(1_A\ot(\fraka'\dg \frakb')\big)\dg \frakc'\Big)\\
&+\kappa a_0b_0c_0\ot\big((\fraka'\dg \frakb') \dg \frakc'\big)+\lambda a_0b_0c_0\ot\Big(\fraka'\dg\big((1_A\ot\frakb')\dg \frakc'\big)\Big)\\
&+\kappa a_0b_0c_0\Big(\fraka'\dg\big((1_A\ot\frakb')\dg \frakc'\big)\Big)+\lambda a_0b_0c_0\ot\Big(\fraka'\dg\big(1_A\ot(\frakb'\dg \frakc')\big)\Big)\\
&+\lambda a_0b_0c_0\ot\Big((1_A\ot\fraka')\dg (\frakb'\dg \frakc')\Big)+\lambda^2 a_0b_0c_0\ot\Big(\fraka'\dg (\frakb'\dg \frakc')\Big)\\
&+\lambda\kappa a_0b_0c_0\Big(\fraka'\dg(\frakb'\dg \frakc')\Big)+\kappa a_0b_0c_0\Big((1_A\ot\fraka')\dg(\frakb'\dg\frakc')\Big).
\end{align*}
Now by the induction hypothesis, the $i$-th term in the expansion of $(\fraka\dg\frakb)\dg\frakc$ matches with the $\sigma(i)$-th term in the expansion of $\fraka\dg(\frakb\dg\frakc)$, where the permutation $\sigma\in \sum_{16}$ is
\begin{equation*}
	\begin{pmatrix}
		i\\ \sigma(i)
	\end{pmatrix}
	=
	\left ( \begin{array}{cccccccccccccccc}
		1 & 2 & 3 & 4 & 5 & 6 & 7 & 8 & 9 & 10 & 11 & 12 & 13 & 14 & 15 & 16\\
		1 & 5 & 12 & 9 & 6 & 10 & 11 & 2 & 7 & 13 & 16 & 3 & 8 & 14 & 15 & 4
	\end{array} \right).
\end{equation*}
This completes the inductive proof of Eq.~(\mref{eq:assdg}) and hence the associativity of $\dg$.

In summary, we have proved that $(\sha_e(A), \dg)$ is a commutative algebra with unit $1_A$.

We finally prove that $\pg$ is an \wmrbo of weight $(\lambda, \kappa)$ on $\sha_e(A)$. Let $\fraka, \frakb\in\sha_e(A)$. By Eq.~\meqref{eq:dfndg}, we have
\vsb
\begin{align*}
\pg(\fraka)\dg \pg(\frakb)
={}& (1_A\ot \fraka)\dg(1_A\ot \frakb)\\
={}& 1_A\ot\Big(\fraka\dg(1_A\ot \frakb)\Big)+1_A\ot\Big((1_A\ot\fraka)\dg \frakb\Big)+\lambda1_A\ot(\fraka\dg\frakb)+\kappa\fraka\dg\frakb\\
={}& \pg\big(\fraka\dg \pg(\frakb)\big)+\pg\big(\pg(\fraka)\dg\frakb\big)+\lambda \pg(\fraka\dg\frakb)+\kappa\fraka\dg\frakb.
\end{align*}
Thus $\pg$ is an \wmrbo of weight $(\lambda, \kappa)$ on $\sha_e(A)$, and so $(\sha_e(A), \dg, \pg)$ is a commutative \wmrb of weight $(\lambda, \kappa)$.
\smallskip

\noindent
(\mref{it:fcgrba})
To verify the universal property of  $(\sha_e(A), \dg, \pg, j_A)$ as given in Definition~\mref{de:free}, let $(R, P)$ be a commutative \wmrb of weight $(\lambda, \kappa)$ and let $f:A\to R$ be an algebra homomorphism. For any pure tensors $\fraka=a_1\ot\cdots \ot a_m\in A^{\ot m}$ with $m\geq 1$, we use induction on $m\geq 1$ to define an \wmrb homomorphism $\bar f:\sha_e(A)\to R$. If $m=1$, we define $\bar f(a)=f(a)$. Assume that $\bar f(\fraka)$ has been defined for $m\leq k$ with any given $k\geq 1$. Consider $\fraka=a_1\ot\fraka'\in A^{\ot (k+1)}$ for $\fraka'\in A^{\ot k}$. Then
\vsb
\begin{equation}\mlabel{eq:adgp}
\fraka=a_1\ot \fraka'=a_1\dg(1_A\ot \fraka')=a_1\dg \pg(\fraka').
\end{equation}
Define
\vsb
\begin{equation}\mlabel{eq:dfnf}
\bar f(\fraka)=f(a_1)P\big(\bar f(\fraka')\big),
\end{equation}
where $\bar f(\fraka')$ is well-defined by the induction hypothesis.
From the definition of $\bar f$, we have $\bar{f}\circ j_A=f$.

Next we verify that $\bar f$ is an \wmrb homomorphism. By Eq.~\meqref{eq:dfnf}, we obtain
\vsb
\begin{equation*}
\bar f\big(\pg(\fraka)\big)=\bar f(1_A\ot \fraka)=f(1_A)P\big(\bar f(\fraka)\big)=1_R P\big(\bar f(\fraka)\big)=P\big(\bar f(\fraka)\big).
\end{equation*}
This gives
\vsb
\begin{equation}
\bar f\circ \pg=P\circ\bar f.
\mlabel{eq:operatorcomm}
\end{equation}
Then it suffices to prove that $\bar f$ satisfies
\begin{equation}
\bar f(\fraka\dg\frakb)=\bar f(\fraka)\bar f(\frakb),\,\ \quad \fraka\in A^{\ot m}, \frakb\in A^{\ot n}\,\, \text{for}\, m, n\geq 1.
\mlabel{eq:algebrahomo}
\end{equation}
We will carry out the proof by induction on $m+n\geq 2$. When $m+n=2$, then $\fraka, \frakb\in A$. By Eq.~\meqref{eq:dfnf}, we have $\bar f(\fraka\dg\frakb)=\bar f(\fraka \frakb)=f(\fraka\frakb)=f(\fraka)f(\frakb)=\bar f(\fraka)\bar f(\frakb)$. So Eq.~\meqref{eq:algebrahomo} is true for $m+n=2$. For a given $k\geq 2$, assume that Eq.~\meqref{eq:algebrahomo} holds for $m+n\le k$. Now consider $\fraka=a_1\ot \fraka'\in A^{\ot m}$ and $\frakb=b_1\ot \frakb'\in A^{\ot n}$ with $m+n=k+1\geq 3$. Then we get
\begin{align*}
&\bar f(\fraka\dg\frakb)\\
={}& \bar f\big((a_1\ot\fraka')\dg(b_1\ot\frakb')\big) \\
={}& \bar f\Big(\big(a_1\dg \pg(\fraka')\big)\dg\big(b_1\dg \pg(\frakb')\big)\Big) \quad(\text{by Eq.~\meqref{eq:adgp}})\\
={}& \bar f\Big(\big(a_1\dg b_1\big)\dg\big(\pg(\fraka')\dg \pg(\frakb')\big)\Big) \\
={}& \bar f\bigg((a_1b_1)\dg\Big(\pg\big(\pg(\fraka')\dg\frakb'\big)+\pg\big(\fraka'\dg \pg(\frakb')\big)+\lambda  \pg(\fraka'\dg\frakb')+\kappa \fraka'\dg\frakb'\Big)\bigg)\\
={}& \bar f\Big((a_1b_1)\dg \pg\big(\pg(\fraka')\dg\frakb'\big)\Big)+\bar f\Big((a_1b_1)\dg \pg\big(\fraka'\dg \pg(\frakb')\big)\Big)\\
&+\bar f\big((a_1b_1)\dg \lambda  \pg(\fraka'\dg\frakb')\big)+\bar f\big((a_1b_1)\dg (\kappa \fraka'\dg\frakb')\big) \\
={}& f(a_1b_1)P\Big(\bar f\big(\pg(\fraka')\dg\frakb'\big)\Big)+f(a_1b_1)P\Big(\bar f\big(\fraka'\dg \pg(\frakb')\big)\Big)\\
&+f(a_1b_1)P\big(\bar f(\lambda\fraka'\dg\frakb')\big)+\bar f\big((a_1b_1)\dg (\kappa \fraka'\dg\frakb')\big) \quad(\text{by Eq.~\meqref{eq:dfnf}})\\
={}& f(a_1b_1)P\Big(\bar f\big(\pg(\fraka')\big)\bar f(\frakb')\Big)+f(a_1b_1)P\Big(\bar f(\fraka')\bar f \big(\pg(\frakb')\big)\Big)\\
&+f(a_1b_1)P\big(\lambda\bar f(\fraka')\bar f(\frakb')\big)+\kappa\bar f(a_1b_1)\bar f(\fraka')\bar f(\frakb') \quad(\text{by the induction hypothesis})\\
={}& f(a_1b_1)\Big(P\big(P(\bar f(\fraka'))\bar f(\frakb')\big)+P\big(\bar f(\fraka')P(\bar f(\frakb'))\big)+\lambda P\big(\bar f(\fraka')\bar f(\frakb')\big)+\kappa \bar f(\fraka')\bar f(\frakb')\Big) \quad(\text{by Eq.~\meqref{eq:operatorcomm}})\\
={}& f(a_1)f(b_1)P\big(\bar f(\fraka')\big)P\big(\bar f(\frakb')\big)\\
={}& \Big(f(a_1)P\big(\bar f(\fraka')\big)\Big)\Big(f(b_1)P\big(\bar f(\frakb')\big)\Big)\\
={}& \bar f\big(a_1\dg \pg(\fraka')\big)\bar f\big(b_1\dg \pg(\frakb')\big) \quad(\text{by Eq.~\meqref{eq:dfnf}})\\
={}& \bar f(\fraka)\bar f(\frakb) \quad(\text{by Eq.~\meqref{eq:adgp}}).
\end{align*}
This completes the induction and thus the proof of Eq.~(\mref{eq:algebrahomo}). The uniqueness of $\bar{f}$ follows from the facts that $\bar{f}$ needs to be an \wmrb homomorphism and $\bar{f}\circ j_A=f$.
Thus the quadruple $(\sha_e(A), \dg, \pg, j_A)$ has the desired universal property.
\end{proof}
\vsd

\subsection{The special case of $A=\bfk$ }
\mlabel{subsec:spec}
In this subsection, we give an explicit description of the product in the free commutative \wmrb $\sha_e(A)$ over $A$ in the special case when $A=\bfk$.

In this case, we have
\vsb
$$\sha_e(\bfk)=\bigoplus_{n\geq 0}\bfk^{\ot (n+1)}=\bigoplus_{n\geq 0}\bfk \bfone^{\ot (n+1)}.$$
Furthermore, the product  $\dg$ in $\sha_e(\bfk)$ can be formulated explicitly as follow.
The formula includes as special cases the formula of free commutative Rota-Baxter algebras (see~\cite[Proposition~3.2.3]{Gub} which is recalled in Eq.\,\meqref{eq:mix}) and the formula of free commutative modified Rota-Baxter algebras (see~\cite[Proposition 3.4]{ZGG19}).

\begin{prop}
For $m,n\geq 0$ and $\lambda, \kappa \in \bfk$, we have \begin{equation}\mlabel{eq:powerzero}
\bfone^{\ot (m+1)} \dg \bfone^{\ot (n+1)}=\sum_{r=0}^{\min\{m,n\}}\sum_{i=0}^{r}\dbinom{m+n-r}{m}\dbinom{m}{r}\dbinom{r}{i}\lambda^{r-i}\kappa^i\bfone^{\ot (m+n+1-r-i)}.
\end{equation}
The product $\bfone^{\ot (m+1)} \dg \bfone^{\ot (n+1)}$ can also be expressed as a linear combination of the basis elements of $\sha_e(\bfk)$ as follows.
\vsb
\begin{eqnarray*} \bfone^{\ot (m+1)} \dg \bfone^{\ot (n+1)}&=&\sum_{\ell=0}^{2\min\{m,n\}} \sum_{i=0}^{\ell}\dbinom{m+n-\ell+i}{m}\dbinom{m}{\ell-i}\dbinom{\ell-i}{i}\lambda^{\ell-2i}\kappa^i \bfone^{\ot(m+n+1-\ell)}.
\end{eqnarray*}
\mlabel{pp:kprod}
\vsd
\end{prop}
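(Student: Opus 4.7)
The plan is to prove the first formula by induction on $m+n$ using the recursion for $\dg$ in Eq.~\meqref{eq:dfndg}, then deduce the second formula via the change of variables $\ell = r+i$.

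Setting $T(m,n) := \bfone^{\ot (m+1)} \dg \bfone^{\ot(n+1)}$ and specializing Theorem~\mref{thm:ab} to $A = \bfk$, the recursion collapses to
\[ T(m,n) = \bfone \ot \bigl( T(m-1,n) + T(m,n-1) + \lambda T(m-1,n-1)\bigr) + \kappa\, T(m-1,n-1), \quad m,n \geq 1, \]
with boundary values $T(0,n) = \bfone^{\ot(n+1)}$ and $T(m,0) = \bfone^{\ot(m+1)}$. These boundary cases match the proposed formula directly, since it collapses to the single term $r = i = 0$.

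For the inductive step, fix the target monomial $\lambda^{r-i}\kappa^i \bfone^{\ot (m+n+1-r-i)}$ and extract its coefficient from each of the four summands on the right, tracking the $(r',i')$-values forced by matching tensor degree and $\lambda,\kappa$-exponents. This yields the four contributions
\[ \binom{m+n-r-1}{m-1}\binom{m-1}{r}\binom{r}{i}, \quad \binom{m+n-r-1}{m}\binom{m}{r}\binom{r}{i}, \]
\[ \binom{m+n-r-1}{m-1}\binom{m-1}{r-1}\binom{r-1}{i}, \quad \binom{m+n-r-1}{m-1}\binom{m-1}{r-1}\binom{r-1}{i-1}, \]
respectively from $\bfone \ot T(m-1,n)$, $\bfone \ot T(m,n-1)$, $\bfone \ot \lambda T(m-1,n-1)$, and $\kappa\, T(m-1,n-1)$. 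Three successive applications of Pascal's rule---first $\binom{r-1}{i}+\binom{r-1}{i-1}=\binom{r}{i}$ on the last two, then $\binom{m-1}{r}+\binom{m-1}{r-1}=\binom{m}{r}$, and finally $\binom{m+n-r-1}{m-1}+\binom{m+n-r-1}{m}=\binom{m+n-r}{m}$---collapse the sum to $\binom{m+n-r}{m}\binom{m}{r}\binom{r}{i}$, matching the claimed coefficient.

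The second formula follows from the first by substituting $\ell = r + i$: the monomial becomes $\lambda^{\ell-2i}\kappa^i\bfone^{\ot(m+n+1-\ell)}$, the binomials become $\binom{m+n-\ell+i}{m}\binom{m}{\ell-i}\binom{\ell-i}{i}$, and the bounds transform to $0 \leq \ell \leq 2\min(m,n)$, with the factor $\binom{\ell-i}{i}$ automatically killing any terms beyond $i = \lfloor \ell/2 \rfloor$, so enlarging the upper limit to $\ell$ in the statement is harmless. The main obstacle is purely the bookkeeping in the inductive step: correctly aligning tensor degrees and $\lambda,\kappa$-exponents across the four contributions (in particular, the $\kappa$-term lives at one lower tensor degree than the three $\bfone \ot$-wrapped ones), so that the Pascal cascade closes up cleanly. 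Degenerate boundary cases $r = 0$ or $i = 0$ are absorbed automatically by the vanishing of the relevant binomials.
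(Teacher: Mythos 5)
Your proposal is correct and follows essentially the same route as the paper: induction on $m+n$ via the four-term recursion for $\dg$, with the four contributions collapsed by three applications of Pascal's rule (the paper performs the same merging by shifting summation indices in whole sums rather than extracting the coefficient of a fixed monomial, and it additionally invokes commutativity to assume $m\le n$, but the out-of-range terms are handled by the same vanishing-binomial conventions you use). The passage to the second formula via $\ell=r+i$ is likewise exactly the paper's one-line remark, which you justify in slightly more detail.
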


\begin{proof}
We prove Eq.~(\mref{eq:powerzero}) by induction on $s:=m+n\geq 0$. When $s=0$, we have $m=n=0$, and so $\bfone \dk \bfone=\bfone$.
For a given $k\geq 0$, assume that Eq.~(\mref{eq:powerzero}) holds for $0\le s\le k$. Consider the case when $s=k+1$. If one of $m,n$ is zero, without loss of generality, we suppose that $m=0$. Then
\vsb
$$\bfone \dk \bfone^{\ot (n+1)}=\bfone^{\ot (n+1)}=\sum_{r=0}^{0}\sum_{i=0}^{r}\dbinom{n-r}{0}\dbinom{0}{r}\dbinom{r}{i}\lambda^{r-i}\kappa^i\bfone^{\ot (n+1-r-i)},$$
as needed. If $m,n\geq 1$, we can assume $m\le n$ because of the commutativity of $\dg$. Then by  the induction hypothesis, we obtain
\vsb
\begin{align}\notag
&\bfone^{\ot (m+1)} \dg \bfone^{\ot (n+1)}\\ \notag
={}& \bfone\ot\Big(\bfone^{\ot (m+1)} \dg \bfone^{\ot n}\Big)+\bfone\ot \Big(\bfone^{\ot m} \dg \bfone^{\ot (n+1)}\Big)+\lambda \bfone\ot \Big(\bfone^{\ot m} \dg \bfone^{\ot n}\Big)+\kappa \bfone^{\ot m} \dg \bfone^{\ot n} \\ \notag
={}& \sum_{r=0}^{\min\{m,n-1\}}\sum_{i=0}^{r}\dbinom{m+n-1-r}{m}\dbinom{m}{r}\dbinom{r}{i}\lambda^{r-i}\kappa^i\bfone^{\ot (m+n+1-r-i)}\\ \notag
&+\sum_{r=0}^{\min\{m-1,n\}}\sum_{i=0}^{r}\dbinom{m+n-1-r}{m-1}\dbinom{m-1}{r}\dbinom{r}{i}\lambda^{r-i}\kappa^i\bfone^{\ot (m+n+1-r-i)}\\ \notag
&+\sum_{r=0}^{\min\{m-1,n-1\}}\sum_{i=0}^{r}\dbinom{m+n-2-r}{m-1}\dbinom{m-1}{r}\dbinom{r}{i}\lambda^{r+1-i}\kappa^i\bfone^{\ot (m+n-r-i)}\\ \notag
&+\sum_{r=0}^{\min\{m-1,n-1\}}\sum_{i=0}^{r}\dbinom{m+n-2-r}{m-1}\dbinom{m-1}{r}\dbinom{r}{i}\lambda^{r-i}\kappa^{i+1}\bfone^{\ot (m+n-1-r-i)}\\ \notag
={}& \underline{\sum_{r=0}^{\min\{m,n-1\}}\sum_{i=0}^{r}\dbinom{m+n-1-r}{m}\dbinom{m}{r}\dbinom{r}{i}\lambda^{r-i}\kappa^i\bfone^{\ot (m+n+1-r-i)}}\\ \notag
&+\sum_{r=0}^{m-1}\sum_{i=0}^{r}\dbinom{m+n-1-r}{m-1}\dbinom{m-1}{r}\dbinom{r}{i}\lambda^{r-i}\kappa^i\bfone^{\ot (m+n+1-r-i)}\quad(\text{by $\min\{m-1,n\}=m-1$})\\ \notag
&+\sum_{r=0}^{m-1}\sum_{i=0}^{r}\dbinom{m+n-2-r}{m-1}\dbinom{m-1}{r}\dbinom{r}{i}\lambda^{r+1-i}\kappa^i\bfone^{\ot (m+n-r-i)}\quad(\text{by $\min\{m-1,n-1\}=m-1$})\\ \notag
&+\sum_{r=0}^{m-1}\sum_{i=0}^{r}\dbinom{m+n-2-r}{m-1}\dbinom{m-1}{r}\dbinom{r}{i}\lambda^{r-i}\kappa^{i+1}\bfone^{\ot (m+n-1-r-i)}\quad(\text{by $\min\{m-1,n-1\}=m-1$}).
\vsb
\end{align}

Now if $m=n$, then $\min\{m,n-1\}=n-1=m-1$; while if $m<n$, then $m\le n-1$ and so $\min\{m,n-1\}=m$. In either case, the underlined term in the above sum becomes
\vsb
\begin{equation}
\begin{split}
	&\sum_{r=0}^{\min\{m,n-1\}}\sum_{i=0}^{r}\dbinom{m+n-1-r}{m}\dbinom{m}{r}\dbinom{r}{i}\lambda^{r-i}\kappa^i\bfone^{\ot (m+n+1-r-i)} \\
	={}& \sum_{r=0}^{m-1}\sum_{i=0}^{r}\dbinom{m+n-1-r}{m}\dbinom{m}{r}\dbinom{r}{i}\lambda^{r-i}\kappa^i\bfone^{\ot (m+n+1-r-i)}\\
	={}& \sum_{r=0}^{m}\sum_{i=0}^{r}\dbinom{m+n-1-r}{m}\dbinom{m}{r}\dbinom{r}{i}\lambda^{r-i}\kappa^i\bfone^{\ot (m+n+1-r-i)}\\
	&\quad(\text{by the convention $\dbinom{m+n-1-m}{m}=\dbinom{n-1}{m}=\dbinom{m-1}{m}=0$}).
\end{split}
\mlabel{eq:m}
\vsb
\end{equation}

Applying this equality to the underlined term gives
\vsb
\begin{align*}
		&1^{\ot (m+1)} \dg 1^{\ot (n+1)}\\
		={}& \sum_{r=0}^{m}\sum_{i=0}^{r}\dbinom{m+n-1-r}{m}\dbinom{m}{r}\dbinom{r}{i}\lambda^{r-i}\kappa^i\bfone^{\ot (m+n+1-r-i)}\\
		&+\sum_{r=0}^{m-1}\sum_{i=0}^{r}\dbinom{m+n-1-r}{m-1}\dbinom{m-1}{r}\dbinom{r}{i}\lambda^{r-i}\kappa^i\bfone^{\ot (m+n+1-r-i)}\\
	    &+\sum_{r=0}^{m-1}\sum_{i=0}^{r}\dbinom{m+n-2-r}{m-1}\dbinom{m-1}{r}\dbinom{r}{i}\lambda^{r+1-i}\kappa^i\bfone^{\ot (m+n-r-i)}\\
	    &+\sum_{r=0}^{m-1}\sum_{i=0}^{r}\dbinom{m+n-2-r}{m-1}\dbinom{m-1}{r}\dbinom{r}{i}\lambda^{r-i}\kappa^{i+1}\bfone^{\ot (m+n-1-r-i)}\\
		={}& \sum_{r=0}^{m}\sum_{i=0}^{r}\dbinom{m+n-1-r}{m}\dbinom{m}{r}\dbinom{r}{i}\lambda^{r-i}\kappa^i\bfone^{\ot (m+n+1-r-i)}\\
		&+\sum_{r=0}^{m}\sum_{i=0}^{r}\dbinom{m+n-1-r}{m-1}\dbinom{m-1}{r}\dbinom{r}{i}\lambda^{r-i}\kappa^i\bfone^{\ot (m+n+1-r-i)} \quad(\text{by the convention $\dbinom{m-1}{m}=0 $})\\
		&+\sum_{r=0}^{m-1}\sum_{i=0}^{r+1}\dbinom{m+n-2-r}{m-1}\dbinom{m-1}{r}\dbinom{r}{i}\lambda^{r+1-i}\kappa^i\bfone^{\ot (m+n-r-i)}\quad(\text{by the convention $\dbinom{r}{r+1}=0 $})\\
		&+\sum_{r=0}^{m-1}\sum_{i=0}^{r+1}\dbinom{m+n-2-r}{m-1}\dbinom{m-1}{r}\dbinom{r}{i-1}\lambda^{r+1-i}\kappa^{i}\bfone^{\ot (m+n-r-i)}\\
		& \quad(\text{by shifting the index $i$ and the convention $\dbinom{r}{0-1}=\dbinom{r}{-1}=0 $})\\
		={}& \sum_{r=0}^{m}\sum_{i=0}^{r}\dbinom{m+n-1-r}{m}\dbinom{m}{r}\dbinom{r}{i}\lambda^{r-i}\kappa^i\bfone^{\ot (m+n+1-r-i)}\\
		&+\sum_{r=0}^{m}\sum_{i=0}^{r}\dbinom{m+n-1-r}{m-1}\dbinom{m-1}{r}\dbinom{r}{i}\lambda^{r-i}\kappa^i\bfone^{\ot (m+n+1-r-i)}\\
		&+\sum_{r=0}^{m-1}\sum_{i=0}^{r+1}\dbinom{m+n-2-r}{m-1}\dbinom{m-1}{r}\dbinom{r+1}{i}\lambda^{r+1-i}\kappa^i\bfone^{\ot (m+n-r-i)}\\
&\quad(\text{by adding the last two terms in the previous equation})\\
		={}& \sum_{r=0}^{m}\sum_{i=0}^{r}\dbinom{m+n-1-r}{m}\dbinom{m}{r}\dbinom{r}{i}\lambda^{r-i}\kappa^i\bfone^{\ot (m+n+1-r-i)}\\
		&+\sum_{r=0}^{m}\sum_{i=0}^{r}\dbinom{m+n-1-r}{m-1}\dbinom{m}{r}\dbinom{r}{i}\lambda^{r-i}\kappa^i\bfone^{\ot (m+n+1-r-i)}\\
& \quad(\text{by shifting the index $r$ and the convention $\dbinom{m-1}{0-1}=\dbinom{m-1}{-1}=0 $})\\
		={}& \sum_{r=0}^{m}\sum_{i=0}^{r}\dbinom{m+n-r}{m}\dbinom{m}{r}\dbinom{r}{i}\lambda^{r-i}\kappa^i\bfone^{\ot (m+n+1-r-i)}\\
		={}& \sum_{r=0}^{\min\{m,n\}}\sum_{i=0}^{r}\dbinom{m+n-r}{m}\dbinom{m}{r}\dbinom{r}{i}\lambda^{r-i}\kappa^i\bfone^{\ot (m+n+1-r-i)}.
\end{align*}
This completes the induction and thus the proof of Eq.~(\mref{eq:powerzero}).

The second equation follows by taking $\ell=r+i$.
\end{proof}

Let
\vsb
$$P_\bfk:\sha_e(\bfk)\to \sha_e(\bfk), \quad \bfone^{\ot n}\mapsto \bfone^{\ot n+1}, n\geq 0,$$
denote the \wmrbo on $\sha_e(\bfk)$. Then together with the product $\dg$ given in Eq.~(\mref{eq:powerzero}), we obtain
\vsb
\begin{coro}
The \wmrb $(\sha_e(\bfk),\dg,P_\bfk)$ is the initial object in the category of commutative \wmrbs. In other words, for any commutative \wmrb $(R,P)$, there exists a unique \wmrb homomorphism $\bar{f}: (\sha_e(\bfk), \dg, P_\bfk) \to (R,P)$.
\end{coro}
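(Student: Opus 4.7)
The plan is to deduce this corollary directly from the universal property established in Theorem~\mref{thm:ab}, together with the fact that $\bfk$ is the initial object in the category of commutative unital $\bfk$-algebras.

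First, I would apply Theorem~\mref{thm:ab}~(\mref{it:fcgrba}) in the special case $A=\bfk$, which tells us that the quadruple $(\sha_e(\bfk),\dg,P_\bfk,j_\bfk)$ is the free commutative \wmrb of weight $(\lambda,\kappa)$ on $\bfk$, where $j_\bfk:\bfk\to\sha_e(\bfk)$ is the natural embedding $c\mapsto c\bfone$. Thus for any commutative \wmrb $(R,P)$ of weight $(\lambda,\kappa)$ and any algebra homomorphism $f:\bfk\to R$, there exists a unique \wmrb homomorphism $\bar f:\sha_e(\bfk)\to R$ with $\bar f\circ j_\bfk=f$.

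Second, since every object in the category of commutative \wmrbs is in particular a unital commutative $\bfk$-algebra, and since $\bfk$ is the initial object in that category, there is a unique algebra homomorphism $f:\bfk\to R$ given by $c\mapsto c\cdot 1_R$. Feeding this unique $f$ into the universal property of the previous paragraph produces a \wmrb homomorphism $\bar f:\sha_e(\bfk)\to R$, which is the only \wmrb homomorphism lifting $f$. The existence of $\bar f$ is therefore immediate. For the uniqueness, I would observe that any \wmrb homomorphism $g:\sha_e(\bfk)\to R$ restricts to an algebra homomorphism $g\circ j_\bfk:\bfk\to R$, which must coincide with the unique $f$ above; then the uniqueness clause of Theorem~\mref{thm:ab} forces $g=\bar f$.

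There is no real obstacle here: the corollary is a formal consequence of the universal property and the initiality of $\bfk$ among commutative algebras. The only subtlety worth mentioning is making sure the base case of the induction in the proof of Theorem~\mref{thm:ab}, namely the definition $\bar f(c\bfone)=f(c)=c\cdot 1_R$, is compatible with the requirement that $\bar f$ be an \wmrb homomorphism; this is automatic since any such $\bar f$ must send $\bfone\mapsto 1_R$ and then $\bfone^{\ot(n+1)}=P_\bfk^n(\bfone)\mapsto P^n(1_R)$, which uniquely determines $\bar f$ on the basis $\{\bfone^{\ot(n+1)}\}_{n\geq 0}$ of $\sha_e(\bfk)$.
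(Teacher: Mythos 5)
Your proposal is correct and follows essentially the same route as the paper: the paper's proof likewise derives existence and uniqueness of $\bar{f}$ from the unique algebra homomorphism $f:\bfk\to R$, $c\mapsto c\cdot 1_R$, combined with the universal property of $(\sha_e(\bfk),\dg,P_\bfk,j_\bfk)$ from Theorem~\mref{thm:ab}. Your closing observation that $\bfone^{\ot(n+1)}=P_\bfk^n(\bfone)$ forces the values of any \wmrb homomorphism on the basis is a correct (if optional) elaboration of the uniqueness argument.
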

\vsb
\begin{proof}
For any given commutative \wmrb $(R,P)$, the existence  and uniqueness of $\bar{f}$ follows from the unique algebra homomorphism  $f:\bfk\to R$, $c\mapsto c1_R$.
\end{proof}

\vsb
\subsection{Diagonally 2-colored Delannoy paths}
\mlabel{ss:del}
We now give a combinatorial interpretation for the coefficients appearing in the product formula in Eq.~\meqref{eq:powerzero}.

Recall~\cite[Proposition~3.2.3]{Gub} that the product of the free commutative Rota-Baxter algebra $\sha(\bfk)$ is given by
\vsc
\begin{equation}\mlabel{eq:mix}
\bfone^{\ot(m+1)}\diamond_\lambda \bfone^{\ot(n+1)}=\sum_{r=0}^{\min{(m,n)}}\dbinom{m+n-r}{m}\dbinom{m}{r}\lambda^r\bfone^{\ot (m+n+1-r)},\quad m,n\geq 0,
\vsb
\end{equation}
where $\diamond_\lambda$ is  the augmented mixable shuffle product of weight $\lambda$. The coefficients have the  combinatorial interpretation by Delannoy paths~\mcite{Gub}. A Delannoy path is a lattice path in the plane $\ZZ^2 $ from $(0, 0)$ to $(m,n)$ using only up steps $(0, 1)$, right steps $(1, 0)$,
and diagonal steps $(1, 1)$. Denote by $D(m,n)$ (resp. $D(m,n,r)$) the number of Delannoy paths from $(0,0)$ to $(m,n)$ (resp. containing exactly $r$ diagonal steps for $0\leq r\leq \min{(m,n)}$). Then
\vsb
\begin{equation}\mlabel{eq:dmnr}
D(m,n,r)=\dbinom{m+n-r}{m}\dbinom{m}{r}.
\vsb
\end{equation}
By Eq.~(\mref{eq:mix}), we get
\vsc
\begin{equation}
D(m,n)=\sum_{r=0}^{\min{(m,n)}}D(m,n,r).
\vsb
\end{equation}
The numbers $D(m,n)$ has the well-known generating function
$$ \sum_{m,n\geq 0} D(m,n)x^my^n=\frac{1}{1-x-y-xy}.$$

Now denote by $E(m,n)$  the  coefficients appeared in Eq.~(\mref{eq:powerzero}). Then
$$E(m,n)=\sum_{r=0}^{\min\{m,n\}}\sum_{i=0}^{r}\dbinom{m+n-r}{m}\dbinom{m}{r}\dbinom{r}{i}
=\sum_{r=0}^{\min\{m,n\}}2^r\dbinom{m+n-r}{m}\dbinom{m}{r}
=\sum_{r=0}^{\min\{m,n\}} 2^r D(m,n,r).
$$
Let $E(m,n,r):=  2^r D(m,n,r)$. Then
\vsb
\begin{equation}\mlabel{eq:emn}
E(m,n)=\sum_{r=0}^{\min\{m,n\}} E(m,n,r).
\vsb
\end{equation}
Direct computation of $E(m,n)$ for the first few values gives the table
$$\begin{array}{ll}
\{E(m,n), 0\leq m\le n\leq , k\geq 0\}=	
\{ 	&1, \\
 &1, 1, \\
 &1, 4, 1, \\
 &1, 7, 7, 1, \\
 &1, 10, 22, 10, 1, \\
 &1, 13, 46, 46, 13, 1, \\
 &\cdots \cdots \}\end{array}
 $$
This sequence coincides with the sequence from Pascal-like triangles and  left-factors of Schr\"oder paths. See \cite[A081577]{Slo} and ~\cite[Example~3]{Bar} for further details. We next give an interpretation of $E(m,n)$ in terms of a diagonally 2-colored version of Delannoy paths.
\vsb

\begin{defn} A {\bf diagonally 2-colored Delannoy path} is a Delannoy path with each diagonal step decorated by two colors.
\end{defn}

By convention, a Delannoy path without diagonals is regarded as a diagonally 2-colored Delannoy path.
Some  diagonally 2-colored Delannoy paths are given in Table~\mref{tab:twodp}, where we use a solid red line $\textcolor{red}{|}$
and a dashed blue line $\,\dashed$ to denote the two types of diagonals.

{\begin{table}[!htp]
\begin{tabular}{ |c|c|c|c|c|c|}
\hline
{ $(m,n)$}&$r=0$&$r=1$&$|\frakD_c(m,n)|$\\
\hline
(1,1)
& \makecell[c]{\dlpa\, \dlpb} & \makecell[c]{\dlpc\, \dlpd}&4\\
\hline
\rule{0pt}{25pt}
(1,2) & \makecell[c]{\dlpe\, \dlpee\,\dlpf}  &\makecell[c]{\dlpg\,\dlph\,\dlpgg\,\dlphh} &7\\
\hline
\rule{0pt}{25pt}
(1,3)&\makecell[c]{\dlphr\,\dlphs\dlpht\dlphu }&\makecell[c]{\dlphv\,\dlphw\,\dlphx\dlphy\,\dlphzz\,\dlphz}      &10\\
\hline
\end{tabular}
\smallskip
\caption{\small Some examples of diagonally 2-colored Delannoy paths from $(0,0)$ to $(m,n)$.}
\mlabel{tab:twodp}
\end{table}
}

Denote by $\frakD_c(m,n)$ (resp. $\frakD_c(m,n,r)$) the set of diagonally 2-colored Delannoy paths from $(0,0)$ to $(m,n)$ (resp. containing exactly $r$ diagonal steps with $0\leq r\leq \min{(m,n)}$).

According to~\cite[A081577]{Slo}, the numbers $|\frakD_c(m,n)|$ has the simple generating function
$$\sum_{m,n\geq 0} |\frakD_c(m,n)|x^m y^n = \frac{1}{1-x-y-2xy}.$$

We further have
\vsb
\begin{equation}\mlabel{eq:frakd}
\frakD_c(m,n)=\bigsqcup_{r\geq0}^{\min{(m,n)}} \frakD_c(m,n,r).
\end{equation}
\begin{lemma}Let $m,n\in\NN$. For any given $0\leq r \leq\min{(m,n)}$, we have
\begin{equation}\mlabel{eq:mnr}
|\frakD_c(m,n,r)|=E(m,n,r).
\end{equation}
\end{lemma}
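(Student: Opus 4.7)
The plan is to unpack the definitions and invoke a simple bijection. Recall from Eq.~\meqref{eq:dmnr} that the number of ordinary Delannoy paths from $(0,0)$ to $(m,n)$ with exactly $r$ diagonal steps is $D(m,n,r) = \binom{m+n-r}{m}\binom{m}{r}$, and that we have set $E(m,n,r) = 2^r D(m,n,r)$. So the goal reduces to showing
\[
|\frakD_c(m,n,r)| = 2^r D(m,n,r).
\]

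First I would set up the natural forgetful map $\pi \colon \frakD_c(m,n,r) \to \frakD(m,n,r)$ that strips the color from each of the $r$ diagonal steps, where $\frakD(m,n,r)$ denotes the set of (uncolored) Delannoy paths from $(0,0)$ to $(m,n)$ with exactly $r$ diagonal steps, so $|\frakD(m,n,r)| = D(m,n,r)$. Next I would observe that $\pi$ is surjective and that the fiber over any uncolored path with $r$ diagonal steps is in bijection with the set of functions $\{1,\dots,r\} \to \{\text{red solid}, \text{blue dashed}\}$, since a diagonally $2$-colored Delannoy path is by definition an ordinary Delannoy path together with an independent choice of one of two colors for each of its diagonal steps.

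Each fiber therefore has cardinality $2^r$, so
\[
|\frakD_c(m,n,r)| = 2^r \cdot |\frakD(m,n,r)| = 2^r D(m,n,r) = E(m,n,r),
\]
which is exactly Eq.~\meqref{eq:mnr}. Conventional edge cases (no diagonal step, i.e.\ $r=0$) are covered automatically since $2^0 = 1$, matching the convention stated just before the definition of a diagonally 2-colored Delannoy path.

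There is no real obstacle here: the statement is essentially the definition of the $2$-coloring combined with the product rule for independent choices. The only minor care required is to confirm that the two colorings of diagonal steps are taken to be genuinely independent across the $r$ diagonals (as the pictorial definition indicates via the two line styles $\textcolor{red}{|}$ and $\,\dashed$), so that the fiber of $\pi$ is a product $\{0,1\}^r$ rather than something constrained.
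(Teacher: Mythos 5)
Your proof is correct, but it takes a genuinely different route from the paper. You argue directly: the forgetful map from $\frakD_c(m,n,r)$ onto the set of uncolored Delannoy paths with exactly $r$ diagonal steps has fibers of size $2^r$ (one independent binary color choice per diagonal step), so $|\frakD_c(m,n,r)|=2^rD(m,n,r)=E(m,n,r)$ follows immediately from the known formula $D(m,n,r)=\binom{m+n-r}{m}\binom{m}{r}$ recorded in Eq.~\meqref{eq:dmnr}. The paper instead proceeds by induction on $m+n$, using the path recurrence $|\frakD_c(m,n,r)|=|\frakD_c(m-1,n,r)|+|\frakD_c(m,n-1,r)|+2|\frakD_c(m-1,n-1,r-1)|$ (the factor $2$ accounting for the color of a final diagonal step) and verifying that $2^r\binom{m+n-r}{m}\binom{m}{r}$ satisfies the same recurrence via a Pascal-type binomial identity. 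Your approach is shorter and more conceptual, at the cost of taking Eq.~\meqref{eq:dmnr} as an input; the paper's induction is more computational but essentially re-derives the binomial formula from first principles through the recurrence. The one point you rightly flag --- that the colors of distinct diagonal steps are chosen independently --- is exactly what the paper's definition provides, so your fiber count $2^r$ is justified and the $r=0$ case is handled by the stated convention. Both arguments are sound.
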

\begin{proof}
We apply the induction on $m+n\geq 0$ to prove Eq.~(\mref{eq:mnr}). When $m+n=0$, then $m=n=0$, and so $r=0$. Then
\vsb
$$|\frakD_c(m,n,r)|=|\frakD_c(0,0,0)|=1=E(0,0,0)=E(m,n,r).$$
For a given $k\geq 0$, suppose that Eq.~(\mref{eq:mnr}) holds for $0\le m+n\leq k$  and $0\leq r \leq\min{(m,n)}$. Now we consider the case when $m+n=k+1$. Then $k+1\geq 1$.
If one of $m,n$ is zero, without loss of generality, we set $m=0$. So $r=0$ and
$$|\frakD_c(m,n,r)|=|\frakD_c(0,n,0)|=1=E(0,n,0)=E(m,n,r).$$
Let $m,n\geq 1$. Then we distinguish two cases.

\smallskip
\noindent
{\bf Case 1. $r=0$}. Then by the definition of Delannoy paths, we have
$$|\frakD_c(m,n,0)|=|\frakD_c(m-1,n,0)|+|\frakD_c(m,n-1,0)|.$$
Thus
\vsb
\begin{align*}
	|\frakD_c(m,n,0)|
	={}& |\frakD_c(m-1,n,0)|+|\frakD_c(m,n-1,0)|\\
	={}& E(m-1,n,0)+E(m,n-1,0)\quad(\text{by the induction hypothesis})\\
	={}& 2^0\dbinom{m+n-1}{m-1}\dbinom{m-1}{0}+2^0\dbinom{m+n-1}{m}\dbinom{m}{0}\\
	={}& \dbinom{m+n}{m}
	={} E(m,n,0).
\end{align*}
\smallskip
\noindent
{\bf Case 2. $1\le r\le \min{(m,n)} $}. Then by the definition of diagonally 2-colored Delannoy paths, we have
\vsb
$$|\frakD_c(m,n,r)|=|\frakD_c(m-1,n,r)|+|\frakD_c(m,n-1,r)|+2|\frakD_c(m-1,n-1,r-1)|.$$
Then we obtain
\vsb
\begin{align*}
	&|\frakD_c(m,n,r)|\\
	={}& |\frakD_c(m-1,n,r)|+|\frakD_c(m,n-1,r)|+2*|\frakD_c(m-1,n-1,r-1)|\\
	={}& E(m-1,n,r)+E(m,n-1,r)+2*E(m-1,n-1,r-1) \quad(\text{by the induction hypothesis})\\
	={}& 2^r\dbinom{m+n-1-r}{m-1}\dbinom{m-1}{r}+2^r\dbinom{m+n-1-r}{m}\dbinom{m}{r}+2*2^{r-1}\dbinom{m+n-2-(r-1)}{m-1}
	\dbinom{m-1}{r-1}\\
	={}& 2^r\dbinom{m+n-r}{m}\dbinom{m}{r}\\
	={}& E(m,n,r).
\end{align*}
This completes the induction and hence the proof of the lemma.
\end{proof}

\begin{prop}For $m,n\geq 0$, we have
\vsb
\begin{equation}\mlabel{eq:dmn}
|\frakD_c(m,n)|=E(m,n).
\end{equation}
\mlabel{pp:del}
\end{prop}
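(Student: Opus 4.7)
The plan is to derive this as an immediate consequence of the previous lemma together with the disjoint decomposition recorded in Eq.~\meqref{eq:frakd}. Since the real combinatorial content (the recursion matching up $|\frakD_c(m,n,r)|$ with $E(m,n,r)$) has already been handled, essentially no further work is required: the statement is a summation identity over the diagonal-count index $r$.

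Concretely, I would proceed in three short steps. First, invoke Eq.~\meqref{eq:frakd}, which says that every diagonally $2$-colored Delannoy path from $(0,0)$ to $(m,n)$ has a well-defined number of diagonal steps $r$ lying in the range $0 \leq r \leq \min\{m,n\}$, so
\[
\frakD_c(m,n)=\bigsqcup_{r=0}^{\min\{m,n\}} \frakD_c(m,n,r).
\]
Taking cardinalities gives $|\frakD_c(m,n)|=\sum_{r=0}^{\min\{m,n\}}|\frakD_c(m,n,r)|$. Second, apply the lemma, i.e., Eq.~\meqref{eq:mnr}, term by term to rewrite each $|\frakD_c(m,n,r)|$ as $E(m,n,r)$. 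Third, recognize the resulting sum as $E(m,n)$ via the definition in Eq.~\meqref{eq:emn}. Chaining these three equalities yields $|\frakD_c(m,n)|=E(m,n)$, as required.

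There is no genuine obstacle here; the delicate part was the inductive verification of $|\frakD_c(m,n,r)|=E(m,n,r)$, which relied on the case-split recursion for diagonally $2$-colored Delannoy paths (one contribution each from a right step and an up step, plus a doubled contribution from the two-colored diagonal step) matching the Pascal-type recursion satisfied by $E(m,n,r)=2^r \binom{m+n-r}{m}\binom{m}{r}$. Once that identity is in hand, Proposition~\mref{pp:del} is essentially a bookkeeping consequence and requires no new idea.
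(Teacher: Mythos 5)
Your proposal is correct and matches the paper's own proof exactly: both decompose $\frakD_c(m,n)$ by the number of diagonal steps via Eq.~\meqref{eq:frakd}, apply the lemma $|\frakD_c(m,n,r)|=E(m,n,r)$ termwise, and sum using Eq.~\meqref{eq:emn}. Nothing further is needed.
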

\begin{proof}
By Eqs.~(\mref{eq:emn}),~(\mref{eq:frakd}) and ~(\mref{eq:mnr}), we obtain
$$\hspace{2cm} |\frakD_c(m,n)|=\sum_{r\geq0}^{\min{(m,n)}} |\frakD_c(m,n,r)|
=\sum_{r\geq0}^{\min{(m,n)}}E(m,n,r)
=E(m,n). \hspace{2.5cm}\qedhere
$$
\end{proof}

\vsd

\section{The Hopf algebra structure on free commutative \wmrbs}
\mlabel{sec: Hopfcomm}

In this section,  we will equip the free commutative \wmrb $\sha_e(A):=(\sha_e(A), \dg, \pg, j_A)$ with a Hopf algebra structure, when the generating algebra $A$ is a  connected filtered bialgebra.  In Section~\mref{subsec:bistucture}, we first describe a coalgebra structure on the free commutative \wmrb $\sha_e(A)$.
Then Section~\mref{subsec:Hopfstructure} provides a Hopf algebraic structure on $\sha_e(A)$.
An explicit comultiplication formula in the special case when $A=\bfk$ is given at the end of the section.
\vsb

\subsection{The bialgebra structure from universal properties}
\mlabel{subsec:bistucture}
A {\bf bialgebra} is a quintuple $A:=(A,m_A,\mu_A,\Delta_A,\vep_A)$ where $(A,m_A,\mu_A)$  is an algebra and $(A,\Delta_A,\vep_A)$
is a coalgebra such that $\Delta_A: A\to A\ot A$  and $\vep_A: A\to \bfk$ are morphisms of algebras.
We first construct a comultiplication $\delg$ on $\sha_e(A)$  and a counit $\vg$ on $\sha_e(A)$ by using the universal property of the free \wmrb $\sha_e(A)$,  the comultiplication $\Delta_A$ and the counit $\vep_A$ of $A$.

\begin{lemma}Let $A:=(A,m_A,\mu_A,\Delta_A,\vep_A)$ be a bialgebra. Let $\sha_e(A):=(\sha_e(A), \dg, \pg, j_A)$ be the free commutative \wmrb of weight $(\lambda, \kappa)$ on $A$. Let $\mu\in\bfk$, with $\mu$ being a root of $t^2-\lambda t+\kappa$. There exists a unique homomorphism $\vg:\sha_e(A)\to \bfk$ of \wmrb of weight $(\lambda, \kappa)$ such that
\begin{equation}\mlabel{eq:vgj}
\vg\circ j_A=\vep_A\quad\text{and}\quad \vg\circ \pg= -\mu\id\circ\vg.
\end{equation}
\mlabel{lem:vg}
\end{lemma}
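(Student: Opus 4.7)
The plan is to equip the base ring $\bfk$ with a compatible \wmrbo structure and then invoke the universal property of the free commutative \wmrb $\sha_e(A)$ established in Theorem~\mref{thm:ab}.

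\textbf{Step 1: Turn $\bfk$ into a commutative \wmrb.} Define the linear map $P_\bfk:=-\mu\id:\bfk\to\bfk$. By Proposition~\mref{pp:scalar}~\meqref{it:sca1}, the scalar operator $\nu\id$ is an \wmrbo of weight $(\lambda,\kappa)$ exactly when $\kappa=-\nu^2-\nu\lambda$. Taking $\nu=-\mu$, this condition becomes $\kappa=-\mu^2+\mu\lambda$, i.e.\ $\mu^2-\lambda\mu+\kappa=0$, which holds by assumption on $\mu$. Hence $(\bfk,P_\bfk)$ is a commutative \wmrb of weight $(\lambda,\kappa)$.

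\textbf{Step 2: Apply the universal property.} Since $A$ is a bialgebra, $\vep_A:A\to\bfk$ is a $\bfk$-algebra homomorphism. By the universal property of the free commutative \wmrb $(\sha_e(A),\dg,\pg,j_A)$ proved in Theorem~\mref{thm:ab}~\meqref{it:fcgrba}, there exists a unique \wmrb homomorphism $\vg:\sha_e(A)\to\bfk$ with $\vg\circ j_A=\vep_A$.

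\textbf{Step 3: Derive the operator compatibility.} Because $\vg$ is an \wmrb homomorphism, it intertwines the operators: $\vg\circ\pg=P_\bfk\circ\vg=-\mu\id\circ\vg$, which is the second identity in \meqref{eq:vgj}. Uniqueness is immediate: any \wmrb homomorphism $\vg':\sha_e(A)\to\bfk$ satisfying $\vg'\circ j_A=\vep_A$ must equal $\vg$ by the universal property, since the compatibility $\vg'\circ\pg=-\mu\id\circ\vg'=P_\bfk\circ\vg'$ is automatic from the \wmrb homomorphism condition once the operator on $\bfk$ is fixed to be $P_\bfk$.

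There is essentially no obstacle here: the only substantive point is recognizing that the hypothesis ``$\mu$ is a root of $t^2-\lambda t+\kappa$'' is precisely the condition needed by Proposition~\mref{pp:scalar}~\meqref{it:sca1} to make $-\mu\id$ into an \wmrbo of weight $(\lambda,\kappa)$ on $\bfk$; after that, the lemma reduces to a direct application of the universal property from Theorem~\mref{thm:ab}.
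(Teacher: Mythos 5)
Your proposal is correct and follows essentially the same route as the paper: make $(\bfk,-\mu\id)$ into a commutative \wmrb of weight $(\lambda,\kappa)$ using the root condition on $\mu$, then invoke the universal property of $\sha_e(A)$ applied to $\vep_A$. The only cosmetic difference is that you verify the \wmrbo condition on $-\mu\id$ directly via Proposition~\mref{pp:scalar}~\meqref{it:sca1}, whereas the paper obtains it from Corollary~\mref{co:genf}~\meqref{it:genf2} with $P=-\id_\bfk$ a Rota-Baxter operator of weight $1$; both are valid.
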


\begin{proof}Since $P:=-\id_\bfk$ is a Rota-Baxter operator of weight $1$ on $\bfk$, we have $\tilde{P}:=-\id_\bfk-P=0$. By Corollary~\mref{co:genf}~\eqref{it:genf2} and $\mu$ being the root of $t^2-\lambda t+\kappa$, we obtain $Q:=-\mu\id_\bfk$ is an \wmrbo of weight $(\lambda,\kappa)$ on $\bfk$. Then the result follows from  the universal property of $\sha_e(A)$.
\end{proof}

In the following, we use $\bt$ to denote the tensor product  between $\sha_e(A)$ to distinguish from the usual tensor product in $\sha_e(A)$. Denote by $\bug$ the multiplication in the commutative algebra $\sha_e(A)\bt \sha_e(A)$. First, define a linear operator
\begin{equation}
\bar{P}:\sha_e(A)\bt\sha_e(A)\to \sha_e(A)\bt\sha_e(A), \fraka\bt\frakb\mapsto \pg(\fraka)\bt\vg(\frakb)1_A+\mu\fraka\bt \vg(\frakb)1_A+\fraka\bt\pg(\frakb).
\mlabel{eq:barp}
\end{equation}
\begin{lemma}\mlabel{lem:barp}
The linear operator $\bar{P}$ is an \wmrbo on $\sha_e(A)\bt \sha_e(A)$ of weight $(\lambda,\kappa)$.
\end{lemma}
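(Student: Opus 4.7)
The plan is to verify the \wmrbi of weight $(\lambda,\kappa)$ for $\bar{P}$ by direct computation on pure tensors. Throughout, denote $S:=\pg+\mu\id$ and $\pi:=j_A\circ\vg\colon\sha_e(A)\to\sha_e(A)$, so that Eq.~\meqref{eq:barp} may be rewritten as
$$\bar{P}(\fraka\bt\frakb)=S(\fraka)\bt\pi(\frakb)+\fraka\bt\pg(\frakb).$$
Three preliminary facts drive the verification. First, since $\mu$ is a root of $t^{2}-\lambda t+\kappa$, one has $\mu^{2}-\lambda\mu+\kappa=0$; expanding $S(\fraka)\dg S(\frakc)$ using the \wmrbi for $\pg$ and this scalar identity gives that $S$ is an ordinary Rota-Baxter operator of weight $\lambda-2\mu$, i.e. $S(\fraka)\dg S(\frakc)=S(\fraka\dg S(\frakc))+S(S(\fraka)\dg\frakc)+(\lambda-2\mu)S(\fraka\dg\frakc)$. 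Second, since $\vg$ is a homomorphism of \wmrbs and $j_A$ is an algebra embedding, $\pi$ is an idempotent algebra endomorphism; moreover $\pi(\frakb)=\vg(\frakb)\,1_A$, so $\pi(\frakb)\dg\pi(\frakd)=\pi(\frakb\dg\frakd)$ and $\pi(\frakb)\dg\frakx=\vg(\frakb)\frakx$ for any $\frakx$. Third, Lemma~\mref{lem:vg} gives $\vg\circ\pg=-\mu\vg$, whence $\pi\circ\pg=-\mu\pi$.

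With these, I would take $x=\fraka\bt\frakb$, $y=\frakc\bt\frakd$, and expand $\bar{P}(x)\bug\bar{P}(y)$ as a sum of four terms via the product rule $(u\bt v)\bug(u'\bt v')=(u\dg u')\bt(v\dg v')$. The term containing $\pi(\frakb)\dg\pi(\frakd)=\pi(\frakb\dg\frakd)$ is handled by the RB identity for $S$ of weight $\lambda-2\mu$; the term containing $\pg(\frakb)\dg\pg(\frakd)$ is handled by the \wmrbi for $\pg$; the two cross terms $\pi(\frakb)\dg\pg(\frakd)$ and $\pg(\frakb)\dg\pi(\frakd)$ collapse to scalar multiples of $\pg(\frakd)$ and $\pg(\frakb)$ respectively using $\pi(\frakb)=\vg(\frakb)1_A$.

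In parallel, I would expand $\bar{P}(x\bug\bar{P}(y))+\bar{P}(\bar{P}(x)\bug y)+\lambda\bar{P}(x\bug y)+\kappa(x\bug y)$ using the same decomposition of $\bar{P}$. Grouping by the location of $\pg$ and $\pi$ across the two tensor slots, the second-slot factors of the form $\pi(\frakb\dg\pg(\frakd))$ and $\pi(\pg(\frakb)\dg\frakd)$ simplify via $\pi\pg=-\mu\pi$ to $-\mu\,\pi(\frakb\dg\frakd)$, while second-slot factors of the form $\pg(\frakb\dg\pg(\frakd))+\pg(\pg(\frakb)\dg\frakd)+\lambda\pg(\frakb\dg\frakd)+\kappa(\frakb\dg\frakd)$ reassemble via the \wmrbi for $\pg$ into $\pg(\frakb)\dg\pg(\frakd)$.

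A careful bookkeeping then shows that the ``mixed'' contributions $(\fraka\dg S(\frakc))\bt\vg(\frakd)\pg(\frakb)$ and $(S(\fraka)\dg\frakc)\bt\vg(\frakb)\pg(\frakd)$ match on the nose, the $(\fraka\dg\frakc)\bt\bigl(\pg(\frakb)\dg\pg(\frakd)\bigr)$ contributions match by the \wmrbi for $\pg$, and the $\bt\pi(\frakb\dg\frakd)$ contributions aggregate to $[S(\fraka\dg S(\frakc))+S(S(\fraka)\dg\frakc)+(\lambda-2\mu)S(\fraka\dg\frakc)]\bt\pi(\frakb\dg\frakd)$ precisely thanks to the RB identity for $S$. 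The main obstacle is exactly this last matching: the two separate $-\mu$ contributions coming from $\pi\pg=-\mu\pi$ must combine with the $\lambda\bar P(x\bug y)$ term to produce the weight $\lambda-2\mu$ that $S$ requires, and this is where the root condition $\mu^{2}-\lambda\mu+\kappa=0$ is essential. Once the matching is achieved term by term, the \wmrbi for $\bar P$ of weight $(\lambda,\kappa)$ follows by bilinearity.
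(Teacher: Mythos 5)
Your proposal is correct, and it takes a genuinely different and cleaner route than the paper. The paper proves Lemma~\mref{lem:barp} by brute force: it expands $\bar{P}(\fraka\bt\frakb)\bug\bar{P}(\frakc\bt\frakd)$ into nine products, applies the \wmrbi for $\pg$ together with the substitution $\mu^2=\lambda\mu-\kappa$, then expands the right-hand side into fourteen terms and exhibits an explicit permutation in $\Sigma_{14}$ matching the two lists. You instead factor $\bar{P}(\fraka\bt\frakb)=S(\fraka)\bt\pi(\frakb)+\fraka\bt\pg(\frakb)$ with $S=\pg+\mu\id$ and $\pi=j_A\circ\vg$, and isolate the root condition $\mu^2-\lambda\mu+\kappa=0$ into a single structural fact, namely that $S$ is an ordinary Rota--Baxter operator of weight $\lambda-2\mu$; I checked that this fact and the rest of your outline do go through, with the left side reducing to four groups (the $\pi(\frakb\dg\frakd)$ group handled by the Rota--Baxter identity for $S$, the $\pg(\frakb)\dg\pg(\frakd)$ group by the \wmrbi for $\pg$, and the two cross terms matching on the nose via $\pi(\frakb)\dg\frakx=\vg(\frakb)\frakx$ and $\pi\circ\pg=-\mu\pi$). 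What your approach buys is conceptual transparency and far less bookkeeping: the root condition is used exactly once, in establishing the weight of $S$, rather than being threaded through a long cancellation; the paper's version buys only self-containedness, avoiding the auxiliary operators $S$ and $\pi$. Two small points to tighten in a final write-up: actually display the two-line verification that $S$ satisfies the weight-$(\lambda-2\mu)$ Rota--Baxter identity (this is where $\mu^2-\lambda\mu+\kappa=0$ enters, not in the later aggregation $-\mu-\mu+\lambda=\lambda-2\mu$, which is mere arithmetic), and note explicitly that $1_A$ is the unit of $(\sha_e(A),\dg)$ so that $\pi(\frakb)\dg\frakx=\vg(\frakb)\frakx$ is legitimate.
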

\begin{proof}
	Let $\fraka,\frakb,\frakc,\frakd,\in\sha_e(A)$. By Eq.~(\mref{eq:barp}), we get
	\begin{eqnarray*}
		&&\bar{P}(\fraka\bt\frakb)\bug\bar{P}(\frakc\bt\frakd)\\
		&=&\bigg(\pg(\fraka)\bt\vg(\frakb)1_A+\mu\fraka\bt \vg(\frakb)1_A+\fraka\bt\pg(\frakb)\bigg)\bug\bigg(\pg(\frakc)\bt\vg(\frakd)1_A+\mu\frakc\bt \vg(\frakd)1_A+\frakc\bt\pg(\frakd)\bigg)\\
		&=&\Big(\pg(\fraka)\dg\pg(\frakc)\Big)\bt\Big(\vg(\frakb)\vg(\frakd)1_A\Big)
		+\mu\Big(\pg(\fraka)\dg\frakc\Big)\bt\Big(\vg(\frakb)\vg(\frakd)1_A\Big)+\Big(\pg(\fraka)\dg\frakc\Big)\bt\Big(\vg(\frakb)\pg(\frakd)\Big)\\
		&&+\mu\Big(\fraka\dg\pg(\frakc)\Big)\bt\Big(\vg(\frakb)\vg(\frakd)1_A\Big)
		+\mu^2\Big(\fraka\dg\frakc\Big)\bt\Big(\vg(\frakb)\vg(\frakd)1_A\Big)+\mu\Big(\fraka\dg\frakc\Big)\bt\Big(\vg(\frakb)\pg(\frakd)\Big)\\
		&&+\Big(\fraka\dg\pg(\frakc)\Big)\bt\Big(\vg(\frakd)\pg(\frakb)\Big)
		+\mu\Big(\fraka\dg\frakc\Big)\bt\Big(\vg(\frakd)\pg(\frakb)\Big)+\Big(\fraka\dg\frakc\Big)\bt\Big(\pg(\frakb)\dg\pg(\frakd)\Big)\\
		&=&\pg(\fraka\dg\pg(\frakc))\bt\vg(\frakb)\vg(\frakd)1_A+\pg(\pg(\fraka)\dg\frakc)\bt\vg(\frakb)\vg(\frakd)1_A+\lambda \pg(\fraka\dg\frakc)\bt\vg(\frakb)\vg(\frakd)1_A\\
		&&+\kappa(\fraka\dg\frakc)\bt\vg(\frakb)\vg(\frakd)1_A
		+\mu(\pg(\fraka)\dg\frakc)\bt\vg(\frakb)\vg(\frakd)1_A
		+(\pg(\fraka)\dg\frakc)\bt\vg(\frakb)\pg(\frakd)\\
		&&+\mu(\fraka\dg\pg(\frakc))\bt\vg(\frakb)\vg(\frakd)1_A
		+\lambda\mu\Big(\fraka\dg\frakc\Big)\bt\Big(\vg(\frakb)\vg(\frakd)1_A\Big)
		-\kappa\Big(\fraka\dg\frakc\Big)\bt\Big(\vg(\frakb)\vg(\frakd)1_A\Big)\\
		&&+\mu(\fraka\dg\frakc)\bt\vg(\frakb)\pg(\frakd)
		+(\fraka\dg\pg(\frakc))\bt\vg(\frakd)\pg(\frakb)
		+\mu(\fraka\dg\frakc)\bt\vg(\frakd)\pg(\frakb)\\
		&&+(\fraka\dg\frakc)\bt\pg(\frakb\dg\pg(\frakd))+(\fraka\dg\frakc)\bt\pg(\pg(\frakb)\dg\frakd)
		+\lambda(\fraka\dg\frakc)\bt\pg(\frakb\dg\frakd)\\
		&&+\kappa(\fraka\dg\frakc)\bt(\frakb\dg\frakd).\\
		&&\big(\text{by $\pg$ being an \wmrbo of weight $(\lambda,\kappa)$} \\
		&& \text{ and replacing $\mu^2$ by $\lambda\mu-\kappa$ since $\mu^2-\lambda \mu+\kappa=0$}\big)\\
		&=&\pg(\fraka\dg\pg(\frakc))\bt\vg(\frakb)\vg(\frakd)1_A+\pg(\pg(\fraka)\dg\frakc)\bt\vg(\frakb)\vg(\frakd)1_A+\lambda \pg(\fraka\dg\frakc)\bt\vg(\frakb)\vg(\frakd)1_A\\
		&&+\mu(\pg(\fraka)\dg\frakc)\bt\vg(\frakb)\vg(\frakd)1_A
		+(\pg(\fraka)\dg\frakc)\bt\vg(\frakb)\pg(\frakd)
		+\mu(\fraka\dg\pg(\frakc))\bt\vg(\frakb)\vg(\frakd)1_A\\
		&&+\lambda\mu\Big(\fraka\dg\frakc\Big)\bt\Big(\vg(\frakb)\vg(\frakd)1_A\Big)
		+\mu(\fraka\dg\frakc)\bt\vg(\frakb)\pg(\frakd)
		+(\fraka\dg\pg(\frakc))\bt\vg(\frakd)\pg(\frakb)\\
		&&+\mu(\fraka\dg\frakc)\bt\vg(\frakd)\pg(\frakb)
		+(\fraka\dg\frakc)\bt\pg(\frakb\dg\pg(\frakd))+(\fraka\dg\frakc)\bt\pg(\pg(\frakb)\dg\frakd)\\
		&&+\lambda(\fraka\dg\frakc)\bt\pg(\frakb\dg\frakd)
		+\kappa(\fraka\dg\frakc)\bt(\frakb\dg\frakd)\\
		&&\Big(\text{by collecting the terms with } \kappa\big(\fraka\dg\frakc\big)\bt\big(\vg(\frakb)\vg(\frakd)1_A\big)\Big).
	\end{eqnarray*}
	On the other hand, we have
	\begin{eqnarray*}
		&&\bar{P}((\fraka\bt\frakb)\bug\bar{P}(\frakc\bt\frakd))+\bar{P}(\bar{P}(\fraka\ot\frakb)\bug(\frakc\ot\frakd))
		+\lambda\bar{P}((\fraka\bt\frakb)\bug(\frakc\bt\frakd))+\kappa(\fraka\bt\frakb)\bug(\frakc\bt\frakd)\\
		&=&\bar{P}\Big((\fraka\bt\frakb)\bug(\pg(\frakc)\bt\vg(\frakd)1_A+\mu\frakc\bt\vg(\frakd)1_A+\frakc\bt\pg(\frakd))\Big)\\
		&&+\bar{P}\Big(\big(\pg(\fraka)\bt\vg(\frakb)1_A+\mu\fraka\bt\vg(\frakb)1_A+\fraka\bt\pg(\frakb)\big)\bug(\frakc\ot\frakd)\Big)\\
		&&+\lambda\bar{P}((\fraka\bt\frakb)\bug(\frakc\bt\frakd))+\kappa(\fraka\bt\frakb)\bug(\frakc\bt\frakd)\\
		&=&\bar{P}\Big((\fraka\dg\pg(\frakc))\bt(\frakb\dg\vg(\frakd)1_A)+\mu(\fraka\dg\frakc)\bt(\frakb\dg\vg(\frakd)1_A)
		+(\fraka\dg\frakc)\bt(\frakb\dg\pg(\frakd))\Big)\\
		&&+\bar{P}\Big((\pg(\fraka)\dg\frakc)\bt(\vg(\frakb)1_A\dg\frakd)
		+\mu(\fraka\dg\frakc)\bt(\vg(\frakb)1_A\dg\frakd)+(\fraka\dg\frakc)\bt(\pg(\frakb)\dg\frakd)\Big)\\
		&&+\lambda\pg(\fraka\dg\frakc)\bt\vg(\frakb\dg\frakd)1_A+\lambda\mu(\fraka\dg\frakc)\bt\vg(\frakb\dg\frakd)1_A+\lambda(\fraka\dg\frakc)\bt\pg(\frakb\dg\frakd)\\
		&&+\kappa(\fraka\dg\frakc)\bt(\frakb\dg\frakd)\\
		&=&\pg(\fraka\dg\pg(\frakc))\bt\vg(\frakb)\vg(\frakd)1_A+\mu(\fraka\dg\pg(\frakc))\bt\vg(\frakb)\vg(\frakd)1_A
		+(\fraka\dg\pg(\frakc))\bt\vg(\frakd)\pg(\frakb)\\
		&&+\mu\pg(\fraka\dg\frakc)\bt\vg(\frakb)\vg(\frakd)1_A
		+\mu^2(\fraka\dg\frakc)\bt\vg(\frakb)\vg(\frakd)1_A+\mu(\fraka\dg\frakc)\bt\vg(\frakd)\pg(\frakb)\\
		&&-\mu\pg(\fraka\dg\frakc)\bt\vg(\frakb)\vg(\frakd))1_A
		-\mu^2(\fraka\dg\frakc)\bt\vg(\frakb)\vg(\frakd)1_A+(\fraka\dg\frakc)\bt\pg(\frakb\dg\pg(\frakd))\\
		&&+\pg(\pg(\fraka)\dg\frakc)\bt\vg(\frakd)\vg(\frakb)1_A
		+\mu(\pg(\fraka)\dg\frakc)\bt\vg(\frakb)\vg(\frakd)1_A+(\pg(\fraka)\dg\frakc)\bt\vg(\frakb)\pg(\frakd)\\
		&&+\mu\pg(\fraka\dg\frakc)\bt\vg(\frakb)\vg(\frakd)1_A
		+\mu^2(\fraka\dg\frakc)\bt\vg(\frakb)\vg(\frakd)1_A+\mu(\fraka\dg\frakc)\bt\vg(\frakb)\pg(\frakd)\\
		&&-\mu\pg(\fraka\dg\frakc)\bt\vg(\frakb)\vg(\frakd)1_A
		-\mu^2(\fraka\dg\frakc)\bt\vg(\frakb)\vg(\frakd)1_A+(\fraka\dg\frakc)\bt\pg(\pg(\frakb)\dg\frakd)\\
		&&+\lambda\pg(\fraka\dg\frakc)\bt\vg(\frakb)\vg(\frakd)1_A+\lambda\mu(\fraka\dg\frakc)\bt\vg(\frakb)\vg(\frakd)1_A+\lambda(\fraka\dg\frakc)\bt\pg(\frakb\dg\frakd)\\
		&&+\kappa(\fraka\dg\frakc)\bt(\frakb\dg\frakd)\\
		&&\quad(\text{by Eq.~(\mref{eq:barp}), Lemma~\mref{lem:vg} and $\vg$ being an algebra homomorphism})\\
		&=&\pg(\fraka\dg\pg(\frakc))\bt\vg(\frakb)\vg(\frakd)1_A+\mu(\fraka\dg\pg(\frakc))\bt\vg(\frakb)\vg(\frakd)1_A
		+(\fraka\dg\pg(\frakc))\bt\vg(\frakd)\pg(\frakb)\\
		&&+\mu(\fraka\dg\frakc)\bt\vg(\frakd)\pg(\frakb)+(\fraka\dg\frakc)\bt\pg(\frakb\dg\pg(\frakd))+\pg(\pg(\fraka)\dg\frakc)\bt\vg(\frakd)\vg(\frakb)1_A\\
		&&+\mu(\pg(\fraka)\dg\frakc)\bt\vg(\frakb)\vg(\frakd)1_A+(\pg(\fraka)\dg\frakc)\bt\vg(\frakb)\pg(\frakd)+\mu(\fraka\dg\frakc)\bt\vg(\frakb)\pg(\frakd)\\
		&&+(\fraka\dg\frakc)\bt\pg(\pg(\frakb)\dg\frakd)+\lambda\pg(\fraka\dg\frakc)\bt\vg(\frakb)\vg(\frakd)1_A+\lambda\mu(\fraka\dg\frakc)\bt\vg(\frakb)\vg(\frakd)1_A\\
		&&+\lambda(\fraka\dg\frakc)\bt\pg(\frakb\dg\frakd)+\kappa(\fraka\dg\frakc)\bt(\frakb\dg\frakd)\\
		&&\Big(\text{by collecting the terms with } \mu\pg(\fraka\dg\frakc)\bt\vg(\frakb)\vg(\frakd)1_A \text{ and } \mu^2(\fraka\dg\frakc)\bt\vg(\frakb)\vg(\frakd)1_A\Big).
	\end{eqnarray*}
	Then the $i$-th term in the expansion of $\bar{P}(\fraka\bt\frakb)\bug\bar{P}(\frakc\bt\frakd)$ matches with the $\sigma(i)$-th term in the expansion of $\bar{P}((\fraka\bt\frakb)\bug\bar{P}(\frakc\bt\frakd))+\bar{P}(\bar{P}(\fraka\ot\frakb)\bug(\frakc\ot\frakd))
	+\lambda\bar{P}((\fraka\bt\frakb)\bug(\frakc\bt\frakd))+\kappa(\fraka\bt\frakb)\bug(\frakc\bt\frakd)$, where the permutation $\sigma\in \sum_{14}$ is
	\begin{equation*}
		\begin{pmatrix}
			i\\ \sigma(i)
		\end{pmatrix}
		=
		\left ( \begin{array}{cccccccccccccccc}
			1 & 2 & 3 & 4 & 5 & 6 & 7 & 8 & 9 & 10 & 11 & 12 & 13 & 14 \\
			1 & 6 & 11 & 7 & 8 & 2 & 12 & 9 & 3 & 4 & 5 & 10 & 13 & 14
		\end{array} \right).
	\end{equation*}
	Thus $\bar{P}$ is an \wmrbo of weight $(\lambda,\kappa)$ on $\sha_e(A)\bt\sha_e(A)$.
\end{proof}

By Lemma~\mref{lem:barp}, the pair $(\sha_e(A)\ot\sha_e(A),\bar{P})$ is an \wmrb of weight $(\lambda,\kappa)$. Then by applying the universal property of $\sha_e(A)$, we obtain
\begin{lemma}
There exists a unique \wmrb homomorphism
\begin{equation*}
\delg:\sha_e(A)\to\sha_e(A)\bt\sha_e(A)	
\end{equation*}
such that
\vsb
\begin{equation}
\delg\circ j_A=\Delta_A\quad\text{and}\quad \delg\circ \pg=\bar{P}\circ \delg.
\mlabel{eq:delg}
\end{equation}
\mlabel{lem:delg}
\end{lemma}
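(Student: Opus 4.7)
The plan is to derive this lemma as an immediate application of the universal property of the free commutative \wmrb $\sha_e(A)$ (Theorem~\mref{thm:ab}\meqref{it:fcgrba}), with the heavy lifting already done by Lemma~\mref{lem:barp}.

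First, I would set up the target of the homomorphism. By Lemma~\mref{lem:barp}, the pair $(\sha_e(A)\bt \sha_e(A), \bar{P})$ is a commutative \wmrb of weight $(\lambda,\kappa)$: commutativity of the product $\bug$ is inherited componentwise from the commutativity of $\dg$ established in Theorem~\mref{thm:ab}\meqref{it:cgrba}, and $\bar{P}$ satisfies the \wmrbi by Lemma~\mref{lem:barp}. Next I would assemble a candidate algebra homomorphism from $A$ into this \wmrb: take $f:=(j_A\bt j_A)\circ \Delta_A : A\to \sha_e(A)\bt \sha_e(A)$. Since $\Delta_A$ is an algebra homomorphism by the bialgebra assumption on $A$, and $j_A\bt j_A$ is an algebra homomorphism (as a tensor product of the algebra embedding $j_A$ with itself), the composition $f$ is an algebra homomorphism.

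Applying the universal property of $(\sha_e(A), \dg, \pg, j_A)$ from Definition~\mref{de:free} to the \wmrb $(\sha_e(A)\bt\sha_e(A), \bar{P})$ and the algebra homomorphism $f$, I obtain a unique \wmrb homomorphism
\[
\delg:=\bar f: \sha_e(A)\to \sha_e(A)\bt\sha_e(A)
\]
such that $\delg\circ j_A = f = (j_A\bt j_A)\circ\Delta_A$, which, upon identifying $A\otimes A$ with its image in $\sha_e(A)\bt\sha_e(A)$, gives the first equation $\delg\circ j_A=\Delta_A$ in Eq.~\meqref{eq:delg}. That $\delg$ is an \wmrb homomorphism means precisely that $\delg\circ \pg=\bar P\circ \delg$, which is the second equation in Eq.~\meqref{eq:delg}. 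Uniqueness of $\delg$ is built into the universal property.

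There is essentially no obstacle here beyond bookkeeping: the conceptual work lies in defining $\bar P$ correctly (Eq.~\meqref{eq:barp}) so that it becomes an \wmrbo of the same weight $(\lambda,\kappa)$ on the tensor square, and in verifying the counit $\vg$ interacts with $\pg$ through the scalar $-\mu$ (Lemma~\mref{lem:vg}); both of these were already accomplished in the previous lemmas. The one subtle point to flag in the writeup is the implicit identification of $\Delta_A:A\to A\otimes A$ with its image under $(j_A\bt j_A)\circ \Delta_A$ in $\sha_e(A)\bt\sha_e(A)$, which is what makes the first half of Eq.~\meqref{eq:delg} match the output of the universal property verbatim.
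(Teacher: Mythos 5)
Your proposal is correct and takes essentially the same route as the paper: the paper also obtains $\delg$ by noting that Lemma~\mref{lem:barp} makes $(\sha_e(A)\bt\sha_e(A),\bar{P})$ a commutative \wmrb of weight $(\lambda,\kappa)$ and then invoking the universal property of $(\sha_e(A),\dg,\pg,j_A)$ applied to the algebra homomorphism induced by $\Delta_A$. Your explicit identification of $f=(j_A\bt j_A)\circ\Delta_A$ and the remark about identifying $A\ot A$ with its image are just a slightly more careful spelling-out of what the paper leaves implicit.
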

\vsb
\begin{lemma}With the same notations as above, $\vg$ is idempotent for the convolution product $\ast$:
\begin{equation}
\vg\ast \vg=\vg.
\mlabel{eq:cvg}
\end{equation}
\end{lemma}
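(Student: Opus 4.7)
The plan is to identify $\vg \ast \vg$ as an \wmrb homomorphism from $\sha_e(A)$ to $(\bfk, -\mu\id)$ extending $\vep_A$, and then invoke the uniqueness clause of the universal property (Theorem~\mref{thm:ab}\meqref{it:fcgrba}), which already characterizes $\vg$ by the same two properties in Lemma~\mref{lem:vg}.

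First I would check that $\vg \ast \vg := m_\bfk \circ (\vg \bt \vg) \circ \delg$ is an algebra homomorphism $\sha_e(A) \to \bfk$. This is standard: by Lemma~\mref{lem:delg}, $\delg$ is an \wmrb homomorphism, hence in particular an algebra homomorphism to $\sha_e(A) \bt \sha_e(A)$; since $\vg$ is an algebra homomorphism and $\bfk$ is commutative, the convolution of two algebra maps into $\bfk$ is again an algebra map. Next, using $\delg \circ j_A = \Delta_A$ and $\vg \circ j_A = \vep_A$ from Lemma~\mref{lem:vg} and Lemma~\mref{lem:delg}, we get
\[
(\vg \ast \vg) \circ j_A = m_\bfk \circ (\vep_A \ot \vep_A) \circ \Delta_A = \vep_A,
\]
the last equality being the standard bialgebra identity $\vep_A \ast \vep_A = \vep_A$ coming from the counit axiom.

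The main step is to verify the operator compatibility $(\vg \ast \vg) \circ \pg = -\mu\, (\vg \ast \vg)$. Using $\delg \circ \pg = \bar P \circ \delg$ (Lemma~\mref{lem:delg}) and the explicit formula~\meqref{eq:barp} for $\bar P$, for $\fraka \in \sha_e(A)$ write $\delg(\fraka) = \sum \fraka_{(1)} \bt \fraka_{(2)}$. Then
\[
(\vg \ast \vg)(\pg(\fraka)) = \sum \Bigl( \vg(\pg(\fraka_{(1)}))\,\vg(\fraka_{(2)})\,\vg(1_A) + \mu\,\vg(\fraka_{(1)})\,\vg(\fraka_{(2)})\,\vg(1_A) + \vg(\fraka_{(1)})\,\vg(\pg(\fraka_{(2)})) \Bigr).
\]
Applying $\vg \circ \pg = -\mu\,\vg$ and $\vg(1_A) = 1_\bfk$, the first and third terms contribute $-\mu \sum \vg(\fraka_{(1)})\vg(\fraka_{(2)})$ each, while the middle term contributes $+\mu \sum \vg(\fraka_{(1)})\vg(\fraka_{(2)})$; summing gives $-\mu\,(\vg \ast \vg)(\fraka)$, as required.

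Having shown that $\vg \ast \vg$ is an algebra homomorphism such that $(\vg \ast \vg) \circ j_A = \vep_A$ and $(\vg \ast \vg) \circ \pg = -\mu \id \circ (\vg \ast \vg)$, we conclude that $\vg \ast \vg$ is an \wmrb homomorphism from $(\sha_e(A),\dg,\pg)$ into $(\bfk, -\mu\id)$ extending $\vep_A$ along $j_A$. Since Lemma~\mref{lem:vg} characterized $\vg$ as the \emph{unique} such homomorphism, we obtain $\vg \ast \vg = \vg$. The only delicate point in this argument is the operator computation, where one must carefully exploit $-\mu$ appearing on both sides of~\meqref{eq:barp} so that the three terms telescope to a single multiple of $\vg \ast \vg$; all other steps are formal consequences of the universal property and of the standard bialgebra identity for $\vep_A$.
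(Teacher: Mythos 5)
Your proposal is correct and follows essentially the same route as the paper's proof: both identify $\vg\ast\vg=m_\bfk(\vg\ot\vg)\delg$ as an \wmrb homomorphism to $(\bfk,-\mu\id)$ extending $\vep_A$ (via the counit axiom for $\Delta_A$ and the telescoping computation with $\bar P$ and $\vg\circ\pg=-\mu\vg$) and then invoke the uniqueness in Lemma~\ref{lem:vg}. Your operator computation, with the first and third terms each contributing $-\mu$ and the middle term $+\mu$, matches the paper's line by line.
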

\begin{proof}
By Eqs.~\meqref{eq:vgj} and~\meqref{eq:delg}, $\vg \circ j_A=\vep_A$ and $\delg\circ j_A=\Delta_A$.
We next verify that
\begin{equation}
m_\bfk(\vg\ot\vg)\Delta_A=\vep_A.
\mlabel{eq:bfv}
\end{equation}
For $a\in A$, we have
\begin{eqnarray*}
(m_\bfk(\vg\ot\vg)\Delta_A)(a)&=&m_\bfk(\vg\ot\vg)(\sum_{a}a_{(1)}\ot a_{(2)})\\
&=&m_\bfk(\sum_{a}\vep_A(a_{(1)})\ot \vep_A(a_{(2)}))\quad(\text{by Eq.~\meqref{eq:vgj}})\\
&=&m_\bfk(\vep_A\ot \id_\bfk)(\id_A\ot \vep_A)\Delta(a)\\
&=&m_\bfk(\vep_A\ot \id_\bfk)(a\ot 1_\bfk)\quad(\text{by the right counicity of $\vep_A$})\\
&=&m_\bfk(\vep_A\ot \id_\bfk)(a\ot 1_\bfk)\\
&=&\vep_A(a),
\end{eqnarray*}
proving Eq.~(\mref{eq:bfv}). Thus from Eq.~\meqref{eq:delg}, we obtain $(m_\bfk(\vg\ot\vg)\delg)\circ j_A=\vep_A$.

Furthermore, $m_\bfk(\vg\ot\vg)\Delta_A$ is an algebra homomorphism from $\sha_e(A)$ to $\bfk$. For $\fraka\in\sha_e(A)$, we get
\begin{eqnarray*}
&&(m_\bfk(\vg\ot\vg)\delg)\pg(\fraka)\\
&=&(m_\bfk(\vg\ot\vg)(\bar{P}\delg(\fraka))\quad(\text{by Eq.~\meqref{eq:delg}})\\
&=&(m_\bfk(\vg\ot\vg)\sum_{\fraka}\pg(\fraka_{(1)})\ot\vg(\fraka_{(2)})1_A+ \mu\fraka_{(1)}\ot \vg(\fraka_{(2)})1_A+\fraka_{(1)}\ot \pg(\fraka_{(2)})\quad(\text{by Eq.~\meqref{eq:barp}})\\
&=&m_\bfk\Big(\sum_{\fraka}-\mu\vg(\fraka_{(1)})\ot\vg(\fraka_{(2)})1_\bfk+ \mu\vg(\fraka_{(1)})\ot \vg(\fraka_{(2)})1_\bfk-\mu\vg(\fraka_{(1)})\ot \vg(\fraka_{(2)})\Big)\quad(\text{by Eq.~\meqref{eq:vgj}})\\
&=&(-\mu\id)(m_\bfk(\vg\ot\vg))\delg(\fraka).
\end{eqnarray*}
Thus $m_\bfk(\vg\ot\vg)\delg$ is an \wmrb homomorphism of weight $(\lambda,\kappa)$. Let $m_\bfk$  be the multiplication of $\bfk$.  By the uniqueness of $\vg$, we obtain $\vg\ast\vg=m_\bfk(\vg\ot\vg)\delg=\vg$. This proves the desired result.
\end{proof}
We further have the following conclusion.
\begin{lemma}Let $\delg$ and $\vg$ be as above.
Define a linear operator $\tilde{P}$ on $ \sha_e(A)\bt\sha_e(A)\bt\sha_e(A)$ by
\begin{equation}
\tilde{P}(\fraka\bt\frakb\bt\frakc)=\Big(\pg(\fraka)+\mu\fraka\Big)\bt\vg(\frakb)1_A\bt\vg(\frakc)1_A+\fraka\bt\Big(\pg(\frakb)
+\mu\frakb\Big)\bt\vg(\frakc)1_A+\fraka\bt\frakb\bt\pg(\frakc).
\mlabel{eq:tildp}
\end{equation}
Then $\tilde{P}$ is an \wmrbo on $ \sha_e(A)\bt\sha_e(A)\bt\sha_e(A)$.
Furthermore, $(\id\ot \delg)\delg$ and $(\delg\ot\id)\delg$ are \wmrb homomorphisms from $(\sha_e(A),\pg)$ to $\Big(\sha_e(A)\bt\sha_e(A)\bt\sha_e(A),\tilde{P}\Big)$.
\mlabel{lem:tilp}
\end{lemma}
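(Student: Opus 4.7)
The plan splits into two parts: first, showing that $\tilde{P}$ is an \wmrbo of weight $(\lambda,\kappa)$; second, verifying the operator compatibility of $\phi := (\id\ot\delg)\delg$ and $\phi' := (\delg\ot\id)\delg$ with $\pg$ and $\tilde{P}$. Note that both $\phi$ and $\phi'$ are automatically algebra homomorphisms, since $\delg$ is one by Lemma~\mref{lem:delg} and tensoring with the identity preserves algebra homomorphisms. So after Step~1, only the operator-intertwining part remains.

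For the first part, my preferred route is a structural reduction to Lemma~\mref{lem:barp} rather than a long direct expansion. Set $Y := \sha_e(A)\bt\sha_e(A)$, equipped with the ERBO $\bar{P}$ (from Lemma~\mref{lem:barp}), the linear map $v_Y := m_\bfk\circ(\vg\bt\vg):Y\to\bfk$, and the unit $1_Y := 1_A\bt 1_A$. A short computation shows
$$\tilde{P}(\fraka\bt\eta) = (\pg(\fraka)+\mu\fraka)\bt v_Y(\eta)\cdot 1_Y + \fraka\bt \bar{P}(\eta), \quad \fraka\in\sha_e(A),\ \eta\in Y,$$
which is the same shape as Equation~(\mref{eq:barp}) under the substitution $(\pg,\vg,1_A)\mapsto(\bar{P},v_Y,1_Y)$. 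Two prerequisites then allow the proof of Lemma~\mref{lem:barp} to replay: (i) $v_Y$ is an algebra homomorphism, immediate from the commutativity of $Y$ and the fact that $\vg$ is one; and (ii) $v_Y\circ\bar{P} = -\mu\,v_Y$, which follows from the defining formula for $\bar{P}$, from $\vg(1_A)=1_\bfk$, and from $\vg\circ\pg = -\mu\,\vg$ of Lemma~\mref{lem:vg}. With (i) and (ii) in place, the verification in Lemma~\mref{lem:barp} transfers verbatim to yield that $\tilde{P}$ is an \wmrbo of weight $(\lambda,\kappa)$. A direct expansion of the \wmrbi on pure tensors $\fraka\bt\frakb\bt\frakc$ is the less elegant alternative, once again governed by $\mu^2 = \lambda\mu - \kappa$.

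For Step~2, write $\delg(\fraka) = \sum\fraka_{(1)}\bt\fraka_{(2)}$ in Sweedler notation and use $\delg\circ\pg = \bar{P}\circ\delg$ throughout. The case $\phi' = (\delg\ot\id)\delg$ is straightforward: $(\delg\ot\id)\bar{P}\delg(\fraka)$ expands via $\delg\circ\pg = \bar{P}\circ\delg$ applied to $\fraka_{(1)}$, and the three resulting summands match the three terms of $\tilde{P}(\sum\fraka_{(1,1)}\bt\fraka_{(1,2)}\bt\fraka_{(2)})$ term-by-term. The case $\phi = (\id\ot\delg)\delg$ is the one subtle point and the only real content, and I expect it to be the main obstacle. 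Expanding $(\id\ot\delg)\bar{P}\delg(\fraka)$ produces a summand $\sum(\pg(\fraka_{(1)})+\mu\fraka_{(1)})\bt\vg(\fraka_{(2)})(1_A\bt 1_A)$, whereas the matching summand of $\tilde{P}(\phi(\fraka))$ reads $\sum(\pg(\fraka_{(1)})+\mu\fraka_{(1)})\bt\vg(\fraka_{(2,1)})1_A\bt\vg(\fraka_{(2,2)})1_A$. These agree precisely because
$$\sum\vg(\fraka_{(2,1)})\vg(\fraka_{(2,2)}) = (\vg\ast\vg)(\fraka_{(2)}) = \vg(\fraka_{(2)}),$$
which is exactly the idempotence $\vg\ast\vg=\vg$ of Equation~(\mref{eq:cvg}). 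The remaining terms $\fraka_{(1)}\bt\bar{P}(\delg(\fraka_{(2)}))$ pair off with the other two summands of $\tilde{P}(\phi(\fraka))$ with no further input. This invocation of the idempotence is the key conceptual step that the whole construction has been building toward.
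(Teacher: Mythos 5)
Your proposal is correct. For the second (and substantive) assertion your argument coincides with the paper's: both expand $(\id\bt\delg)\delg\circ\pg$ via $\delg\circ\pg=\bar P\circ\delg$ applied twice, and both reduce the matching of the first block of terms to the convolution idempotence $\vg\ast\vg=\vg$ of Eq.~\eqref{eq:cvg}; the paper likewise writes out only the $(\id\bt\delg)\delg$ case and dismisses $(\delg\bt\id)\delg$ as analogous, consistent with your observation that the latter needs no idempotence. Where you genuinely differ is the first assertion: the paper disposes of it with the single sentence ``follows from a direct computation,'' i.e.\ an unelaborated replay of the fourteen-term expansion of Lemma~\ref{lem:barp}, whereas you factor $\tilde P$ as $\tilde P(\fraka\bt\eta)=(\pg(\fraka)+\mu\fraka)\bt v_Y(\eta)1_Y+\fraka\bt\bar P(\eta)$ on $\sha_e(A)\bt Y$ with $Y=\sha_e(A)\bt\sha_e(A)$ and check that the triple $(\bar P,v_Y,1_Y)$ satisfies exactly the hypotheses used of $(\pg,\vg,1_A)$ in Lemma~\ref{lem:barp} (namely $\bar P$ is an \wmrbo, $v_Y$ is an algebra homomorphism, and $v_Y\circ\bar P=-\mu v_Y$ — all of which do hold, the last by $\vg\circ\pg=-\mu\vg$ and $\vg(1_A)=1_\bfk$). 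This buys a proof of the first claim that actually exists on the page and would iterate to higher tensor powers, at the cost of having to articulate precisely which properties the proof of Lemma~\ref{lem:barp} consumes; the paper's route avoids that bookkeeping but leaves the computation to the reader.
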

\begin{proof}
The first statement follows from a direct computation.
For the second statement, let $\fraka\in\sha_e(A)$. We have
\begin{eqnarray*}
&&((\id\ot \delg)\delg\pg)(\fraka)\\
&=&(\id\ot\delg)\bar{P}(\delg(\fraka))\quad(\text{by Eq.~(\mref{eq:delg})})\\
&=&(\id\ot\delg)\bar{P}\sum_{\fraka}\fraka_{(1)}\bt\fraka_{(2)}\\
&=&(\id\ot\delg)\sum_{\fraka}\bar{P}(\fraka_{(1)}\bt\fraka_{(2)})\\
&=&(\id\ot\delg)\sum_{\fraka}\Big(\pg(\fraka_{(1)})\bt\vg(\fraka_{(2)})1_A+\mu\fraka_{(1)}\bt\vg(\fraka_{(2)})1_A+\fraka_{(1)}\bt\pg(\fraka_{(2)})\Big)\\
&=&\sum_{\fraka}\Big(\pg(\fraka_{(1)})\bt\vg(\fraka_{(2)})1_A\bt1_A
+\mu\fraka_{(1)}\bt\vg(\fraka_{(2)})1_A\bt1_A+\fraka_{(1)}\bt\delg(\pg(\fraka_{(2)}))\Big)\\
&&\quad(\text{by $\delg$ being an algebra algebra homomorphism})\\
&=&\sum_{\fraka}\Big(\pg(\fraka_{(1)})\bt\vg(\fraka_{(2)})1_A\bt1_A
+\mu\fraka_{(1)}\bt\vg(\fraka_{(2)})1_A\bt1_A\Big)
+\sum_{\fraka}\sum_{\fraka_{(2)}}\fraka_{(1)}\bt\bar{P}\Big(\fraka_{(21)}\bt\fraka_{(22)}\Big)\\
&=&\sum_{\fraka}\Big(\pg(\fraka_{(1)})\bt\vg(\fraka_{(2)})1_A\bt1_A
+\mu\fraka_{(1)}\bt\vg(\fraka_{(2)})1_A\bt1_A\Big)\\
&&+\sum_{\fraka}\sum_{\fraka_{(2)}}\fraka_{(1)}\bt\Big(\pg(\fraka_{(21)})\bt\vg(\fraka_{(22)})1_A
+\mu\fraka_{(21)}\bt\vg(\fraka_{(22)})1_A+\fraka_{(21)}\bt\pg(\fraka_{(22)})\Big).
\end{eqnarray*}
On the other hand,
\begin{eqnarray*}
&&(\tilde{P}(\id\ot\delg)\delg)(\fraka)\\
&=&\tilde{P}(\id\ot\delg)(\sum_{\fraka}\fraka_{(1)}\bt\fraka_{(2)})\\
&=&\tilde{P}(\sum_{\fraka}\sum_{\fraka_{(2)}}\fraka_{(1)}\bt\fraka_{(21)}\bt\fraka_{(22)})\\
&=&\sum_{\fraka}\sum_{\fraka_{(2)}}(\pg(\fraka_{(1)})+\mu\fraka_{(1)})\bt\vg(\fraka_{(21)})1_A\bt\vg(\fraka_{(22)})1_A\\
&&+\sum_{\fraka}\sum_{\fraka_{(2)}}\Big(\fraka_{(1)}\bt(\pg(\fraka_{(21)})+\mu\fraka_{(21)})\bt\vg(\fraka_{(22)})1_A
+\fraka_{(1)}\bt\fraka_{(21)}\bt\pg(\fraka_{(22)})\Big)\quad(\text{by Eq.~(\mref{eq:tildp})})\\
&=&\sum_{\fraka}(\pg(\fraka_{(1)})+\mu\fraka_{(1)})\bt(\sum_{\fraka_{(2)}}\vg(\fraka_{(21)})\vg(\fraka_{(22)}))1_A\bt 1_A\\
&&+\sum_{\fraka}\sum_{\fraka_{(2)}}\Big(\fraka_{(1)}\bt(\pg(\fraka_{(21)})+\mu\fraka_{(21)})\bt\vg(\fraka_{(22)})1_A
+\fraka_{(1)}\bt\fraka_{(21)}\bt\pg(\fraka_{(22)})\Big)\\
&=&\sum_{\fraka}(\pg(\fraka_{(1)})+\mu\fraka_{(1)})\bt\vg(\fraka_{(2)})1_A\bt 1_A\\
&&+\sum_{\fraka}\sum_{\fraka_{(2)}}\Big(\fraka_{(1)}\bt(\pg(\fraka_{(21)})+\mu\fraka_{(21)})\bt\vg(\fraka_{(22)})1_A
+\fraka_{(1)}\bt\fraka_{(21)}\bt\pg(\fraka_{(22)})\Big)\quad(\text{by Eq.~(\mref{eq:cvg})}).
\end{eqnarray*}
Thus $((\id\ot \delg)\delg\pg)(\fraka)=(\tilde{P}(\id\ot\delg)\delg)(\fraka)$, and so $(\id\ot \delg)\delg$ is an \wmrb homomorphism.
By the same argument as above, we can prove that  $(\delg\ot\id)\delg$ is also an \wmrb homomorphism.
\end{proof}

We give another preparation before presenting our main result on bialgebras.
\begin{lemma}
Let $Q$ be an \wmrbo  of weight $(\lambda,\kappa)$ on $\sha_e(A)$. Then  $\id\bt Q$ and $Q\bt\id$ are \wmrbos of weight $(\lambda,\kappa)$ on $\sha_e(A)\bar \ot \sha_e(A)$.
\mlabel{lem:grbo}
\end{lemma}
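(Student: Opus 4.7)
The plan is to verify the \wmrbi in Eq.~\meqref{eq:grb} directly on elementary tensors, since both $\id\bt Q$ and $Q\bt\id$ are $\bfk$-linear and the multiplication $\bug$ on $\sha_e(A)\bt\sha_e(A)$ is the componentwise product $(\fraka\bt\frakb)\bug(\frakc\bt\frakd) = (\fraka\dg\frakc)\bt(\frakb\dg\frakd)$. Because the operator $\id\bt Q$ touches only the second tensor factor, the identity should collapse to the \wmrbi for $Q$ applied in the second slot, with the first slot behaving as a passive ``spectator'' multiplied through. A symmetric argument handles $Q\bt\id$.

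Concretely, for $\fraka\bt\frakb, \frakc\bt\frakd\in \sha_e(A)\bt\sha_e(A)$, I would first compute
\[
(\id\bt Q)(\fraka\bt\frakb)\bug(\id\bt Q)(\frakc\bt\frakd) = (\fraka\dg\frakc)\bt\bigl(Q(\frakb)\dg Q(\frakd)\bigr),
\]
and then use that $Q$ is an \wmrbo on $\sha_e(A)$ of weight $(\lambda,\kappa)$ to rewrite $Q(\frakb)\dg Q(\frakd)$ as $Q(\frakb\dg Q(\frakd)) + Q(Q(\frakb)\dg \frakd) + \lambda Q(\frakb\dg\frakd) + \kappa\,\frakb\dg\frakd$. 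Next I would compute the three \wmrb-style terms on the right-hand side:
\[
(\id\bt Q)\bigl((\fraka\bt\frakb)\bug(\id\bt Q)(\frakc\bt\frakd)\bigr) = (\fraka\dg\frakc)\bt Q\bigl(\frakb\dg Q(\frakd)\bigr),
\]
\[
(\id\bt Q)\bigl((\id\bt Q)(\fraka\bt\frakb)\bug(\frakc\bt\frakd)\bigr) = (\fraka\dg\frakc)\bt Q\bigl(Q(\frakb)\dg\frakd\bigr),
\]
and $\lambda(\id\bt Q)\bigl((\fraka\bt\frakb)\bug(\frakc\bt\frakd)\bigr) + \kappa(\fraka\bt\frakb)\bug(\frakc\bt\frakd) = \lambda(\fraka\dg\frakc)\bt Q(\frakb\dg\frakd) + \kappa(\fraka\dg\frakc)\bt(\frakb\dg\frakd)$. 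Summing these four terms matches the expansion of the left-hand side term-by-term, which establishes that $\id\bt Q$ is an \wmrbo of weight $(\lambda,\kappa)$.

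For $Q\bt\id$, the argument is completely analogous, with the roles of the two tensor slots exchanged; the computation reduces to the \wmrbi for $Q$ applied to $\fraka\dg\frakc$ in the first factor, tensored with the passive $\frakb\dg\frakd$ in the second factor. There is no genuine obstacle here: the only subtle point is keeping track of the bookkeeping so that the ``spectator'' factor $\fraka\dg\frakc$ (respectively $\frakb\dg\frakd$) appears consistently in every term, but this is immediate once one observes that $\bug$ is componentwise. Hence the lemma follows by a one-line extension-by-bilinearity argument after the elementary-tensor verification.
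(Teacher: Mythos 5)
Your argument is correct and is essentially identical to the paper's own proof: both verify the \wmrbi on elementary tensors, use that $\bug$ acts componentwise so the first factor is a passive spectator, and reduce the identity to the \wmrbi for $Q$ in the second slot, with $Q\bt\id$ handled by symmetry. No gaps.
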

\begin{proof}
We only consider the operator $\id\bt Q$ since the argument for the other operator is the same.
We only need to verify that $\id\bt Q$ is an \wmrbo for pure tensors $\fraka_1\bt\fraka_2$ and $\frakb_1\bt\frakb_2$ in $\sha_e(A)\bar \ot \sha_e(A)$ with $\fraka_1, \fraka_2, \frakb_1, \frakb_2\in \sha_e(A)$.   For this, we check
\begin{align*}
&(\id\bt Q)(\fraka_1\bt\fraka_2)\bug(\id\bt Q)(\frakb_1\bt\frakb_2)\\
={}& \big(\fraka_1\bt Q(\fraka_2)\big)\bug\big(\frakb_1\bt Q(\frakb_2)\big)\\
={}& (\fraka_1\dg\frakb_1)\bt\big(Q(\fraka_2)\dg Q(\frakb_2)\big)\\
={}& (\fraka_1\dg\frakb_1)\bt\Big(Q\big(Q(\fraka_2)\dg\frakb_2\big)+Q\big(\fraka_2\dg Q(\frakb_2)\big)+\lambda Q(\fraka_2\dg\frakb_2)+\kappa\fraka_2\dg\frakb_2\Big).
\end{align*}
On the other hand,
\begin{align*}
&(\id\bt Q)\Big(\big((\id\bt Q)(\fraka_1\bt\fraka_2)\big)\bug(\frakb_1\bt\frakb_2)\Big)+(\id\bt Q)\Big((\fraka_1\bt\fraka_2)\bug\big((\id\bt Q)(\frakb_1\bt\frakb_2)\big)\Big)\\
&+\lambda(\id\bt Q)\big((\fraka_1\bt\fraka_2)\bug(\frakb_1\bt\frakb_2)\big)+\kappa\big((\fraka_1\bt\fraka_2)\bug(\frakb_1\bt\frakb_2)\big)\\
={}& (\id\bt Q)\Big((\fraka_1\dg\frakb_1)\bt\big(Q(\fraka_2)\dg\frakb_2\big)\Big)+(\id\bt Q)\Big((\fraka_1\dg\frakb_1)\bt\big(\fraka_2\dg Q(\frakb_2)\big)\Big)\\
&+\lambda(\id\bt Q)\big((\fraka_1\dg\frakb_1)\bt(\fraka_2\dg\frakb_2)\big)+\kappa\big((\fraka_1\dg\frakb_1)\bt(\fraka_2\dg\frakb_2)\big)\\
={}& (\fraka_1\dg\frakb_1)\bt Q\big(Q(\fraka_2)\dg\frakb_2\big)+(\fraka_1\dg\frakb_1)\bt Q\big(\fraka_2\dg Q(\frakb_2)\big)\\
&+\lambda(\fraka_1\dg\frakb_1)\bt Q(\fraka_2\dg\frakb_2)+\kappa(\fraka_1\dg\frakb_1)\bt (\fraka_2\dg\frakb_2)\\
={}& (\fraka_1\dg\frakb_1)\bt\Big(Q\big(Q(\fraka_2)\dg\frakb_2\big)+Q\big(\fraka_2\dg Q(\frakb_2)\big)+\lambda Q(\fraka_2\dg\frakb_2)+\kappa\fraka_2\dg\frakb_2\Big).
\end{align*}
This gives what we need.
\end{proof}

Define a $\bfk$-linear map
\vsb
$$\mg:\bfk\to\sha_e(A), \quad  c\mapsto c1_A, c\in \bfk.$$
Then one can prove that $\mg$ is a unit on $\sha_e(A)$ by a direct computation.
\begin{theorem}Let $\mg$, $\vg$ and $\delg$ be as above. Then $\sha_e(A):=(\sha_e(A), \dg, \mg,\delg,\vg)$ forms a bialgebra.
\end{theorem}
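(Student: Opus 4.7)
The plan is to apply the universal property of $\sha_e(A)$ three times: once for coassociativity of $\delg$ and once for each of the two counit identities. The algebra axioms for $(\sha_e(A),\dg,\mg)$ are already established in Theorem~\mref{thm:ab}, while $\delg$ and $\vg$ are algebra homomorphisms for free because they are \wmrb homomorphisms by Lemmas~\mref{lem:delg} and~\mref{lem:vg}. The unit compatibilities $\delg(1_A)=1_A\bt 1_A$ and $\vg(1_A)=1_\bfk$ are then immediate from $\delg\circ j_A=\Delta_A$, $\vg\circ j_A=\vep_A$, and the bialgebra axioms of $A$. So only coassociativity of $\delg$ and the two counit identities remain.

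For coassociativity, the substantive work is already packaged in Lemma~\mref{lem:tilp}, which supplies that both $(\id\bt\delg)\delg$ and $(\delg\bt\id)\delg$ are \wmrb homomorphisms from $(\sha_e(A),\pg)$ to $(\sha_e(A)\bt\sha_e(A)\bt\sha_e(A),\tilde P)$. Precomposing each with $j_A$ yields $(\id\ot\Delta_A)\Delta_A$ and $(\Delta_A\ot\id)\Delta_A$ respectively, which coincide by coassociativity of $\Delta_A$ in the bialgebra $A$. The uniqueness clause in Definition~\mref{de:free} then forces the two composites to be equal.

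For the counit, I would show $(\vg\bt\id)\delg=\id_{\sha_e(A)}$ (and symmetrically $(\id\bt\vg)\delg=\id_{\sha_e(A)}$) under the identifications $\bfk\bt\sha_e(A)\cong\sha_e(A)\cong\sha_e(A)\bt\bfk$, by verifying that both sides are \wmrb endomorphisms of $\sha_e(A)$ agreeing on $j_A(A)$. Agreement on $j_A$ reduces to the counit axiom $(\vep_A\ot\id)\Delta_A=\id_A$ of $A$. The algebra-morphism property of $(\vg\bt\id)\delg$ is automatic from its factorization, so the one point to check is that $(\vg\bt\id)\delg$ intertwines $\pg$, which, using Eqs.~\meqref{eq:delg} and~\meqref{eq:barp}, reduces to the identity
\[
(\vg\bt\id)\bar P(\fraka\bt\frakb)=\vg(\pg(\fraka))\vg(\frakb)1_A+\mu\,\vg(\fraka)\vg(\frakb)1_A+\vg(\fraka)\pg(\frakb)=\vg(\fraka)\pg(\frakb)=\pg\bigl((\vg\bt\id)(\fraka\bt\frakb)\bigr),
\]
where the first two summands cancel precisely because $\vg\circ\pg=-\mu\,\vg$ by Eq.~\meqref{eq:vgj}. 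Uniqueness in the universal property then yields the desired equality, and the symmetric identity is handled in the same way.

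The main subtlety of this plan lies in this last cancellation: the specific shape of $\bar P$ in Eq.~\meqref{eq:barp}, which departs from the naive $\pg\bt\id+\id\bt\pg$ by the $\mu$-twist term $\mu\fraka\bt\vg(\frakb)1_A$, together with the choice of $\mu$ as a root of $t^2-\lambda t+\kappa$ in Lemma~\mref{lem:vg}, are engineered precisely so that $(\vg\bt\id)$ intertwines $\bar P$ with $\pg$. Once this compatibility is noted, the bialgebra axioms follow from three clean invocations of the universal property, and no computation beyond those already carried out in Lemmas~\mref{lem:barp}--\mref{lem:grbo} is required.
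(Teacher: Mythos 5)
Your proposal is correct and follows essentially the same route as the paper: coassociativity via Lemma~\mref{lem:tilp} together with the uniqueness clause of the universal property, and counicity by checking that $(\vg\bt\id)\delg$ and $(\id\bt\vg)\delg$ are \wmrb endomorphisms of $\sha_e(A)$ agreeing with the identity on $j_A(A)$, with exactly the same key cancellation $\vg\circ\pg=-\mu\,\vg$ annihilating the two extra summands of $\bar P$. The only (harmless) difference is that you make explicit the reduction to the counit axiom of $A$ on generators and the unit compatibilities, which the paper leaves implicit.
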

\vsb
\begin{proof}
It suffices to prove the counicity of $\vg$ and the coassociativity of $\delg$.
We first verify that $\vg$ satisfies the counicity, that is,
\begin{equation}\mlabel{eq:vgt}
(\vg\ot\id)\delg=\beta_\ell\quad\text{and}\quad(\id\ot\vg)\delg=\beta_r,
\end{equation}
where
\vsb
$$\beta_\ell:\sha_e(A)\to\bfk\ot\sha_e(A), \fraka\mapsto 1_\bfk\ot\fraka \ \text{ and } \beta_r:\sha_e(A)\to\sha_e(A)\ot \bfk,\fraka\mapsto \fraka\ot 1_\bfk$$
are the natural algebra isomorphisms. Let
$$\phi:=\beta_\ell^{-1}(\vg\ot\id)\delg\quad\text{and}\quad\psi:=\beta_r^{-1}(\id\ot\vg)\delg.$$
Then $\phi:\sha_e(A)\to \sha_e(A)$ and $\psi:\sha_e(A)\to\sha_e(A)$ are algebra homomorphisms. Further,
\begin{eqnarray*}
(\phi\pg)(\fraka)&=&\beta_\ell^{-1}(\vg\ot\id)\delg(\pg(\fraka))\\
&=&\beta_\ell^{-1}(\vg\ot\id)\bar{P}(\delg(\fraka))\\
&=&\beta_\ell^{-1}(\vg\ot\id)\sum_{\fraka}\Big(\pg(\fraka_{(1)})\bt\vg(\fraka_{(2)})1_A
+\mu\fraka_{(1)}\bt\vg(\fraka_{(2)})1_A+\fraka_{(1)})\bt\pg(\fraka_{(2)})\Big)\\
&=&\beta_\ell^{-1}\sum_{\fraka}\Big(\vg(\pg(\fraka_{(1)}))\bt\vg(\fraka_{(2)})1_A
+\mu\vg(\fraka_{(1)})\bt\vg(\fraka_{(2)})1_A)+\vg(\fraka_{(1)})\bt\pg(\fraka_{(2)})\Big)\\
&=&\beta_\ell^{-1}\sum_{\fraka}\vg(\fraka_{(1)})\bt\pg(\fraka_{(2)})\quad(\text{by $\vg\circ \pg= -\mu\id\circ\vg$ in Eq.~(\mref{eq:vgj})})\\
&=&\sum_{\fraka}\vg(\fraka_{(1)})\pg(\fraka_{(2)})\\
&=&\pg\Big(\sum_{\fraka}\vg(\fraka_{(1)})\fraka_{(2)}\Big)\\
&=&\Big(\pg\beta_\ell^{-1}(\vg\ot\id)\delg\Big)(\fraka)\\
&=&(\pg\phi)(\fraka).
\end{eqnarray*}
Thus $\phi\pg=\pg\phi$, and so $\phi$ is an \wmrb homomorphism.  In the same way, we verify that $\psi$ is also an \wmrb homomorphism.
By the universal property of $\sha_e(A)$, we obtain $ \phi=\psi=\id$, proving Eq.~(\mref{eq:vgt}).
Finally, we show that $\delg$ satisfies the coassociativity.
By Lemma~\mref{lem:delg}, we obtain
$$((\id\ot \delg)\delg) \circ j_A= (\id\ot \Delta_A)\Delta_A \quad\text{and}\quad((\delg\ot\id)\delg) \circ j_A= (\Delta_A\ot\id)\Delta_A. $$
By Lemma~\mref{lem:grbo},  $(\id\ot \delg)\delg$ and $(\delg\ot\id)\delg$ are \wmrb homomorphisms from $\sha_e(A)$ to $\sha_e(A)\bt\sha_e(A)\bt\sha_e(A)$. Then by the coassociativity of $\Delta_A$ and the universal property of $\sha_e(A)$, we obtain the coassociativity
$(\id\ot \delg)\delg=(\delg\ot\id)\delg.$
\end{proof}

We note that the coproduct $\delg$ satisfies the following variation of the Hochschild 1-cocycle condition.

\vsb

\begin{prop} For any $\fraka\in\sha_e(A)$, we have
\begin{equation}
\delg\big(\pg(\fraka)\big)=\Big(\pg(\fraka)+\mu\fraka\Big)\bt 1_A+(\id \bt \pg)\delg(\fraka).
\mlabel{eq:dfndeltag1}
\end{equation}
\end{prop}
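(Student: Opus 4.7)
The plan is to derive this identity directly from the two defining properties of $\delg$ given in Lemma~\mref{lem:delg}, namely $\delg \circ \pg = \bar{P}\circ \delg$, combined with the right counicity of $\vg$ proved en route to the bialgebra theorem. No new recursion or induction should be required.

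First, I would write $\delg(\fraka) = \sum_{\fraka} \fraka_{(1)} \bt \fraka_{(2)}$ in Sweedler notation and apply the intertwining relation $\delg\circ\pg = \bar{P}\circ\delg$ together with the explicit formula for $\bar{P}$ from Eq.~\meqref{eq:barp}, obtaining
\begin{equation*}
\delg(\pg(\fraka)) \;=\; \sum_{\fraka}\Big(\pg(\fraka_{(1)})\bt\vg(\fraka_{(2)})1_A + \mu\,\fraka_{(1)}\bt\vg(\fraka_{(2)})1_A + \fraka_{(1)}\bt\pg(\fraka_{(2)})\Big).
\end{equation*}
The last summand is already, by definition, $(\id \bt \pg)\delg(\fraka)$, so it only remains to simplify the first two summands.

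The key step is the right counicity $(\id\ot\vg)\delg = \beta_r$ established in Eq.~\meqref{eq:vgt}, which under the identification $\sha_e(A)\ot\bfk \cong \sha_e(A)$ reads $\sum_{\fraka}\fraka_{(1)}\,\vg(\fraka_{(2)}) = \fraka$. Applied to the second summand, it gives $\sum_{\fraka}\mu\,\fraka_{(1)}\bt\vg(\fraka_{(2)})1_A = \mu\fraka \bt 1_A$. For the first summand, since $\vg(\fraka_{(2)})\in\bfk$ is a scalar, I can pull $\pg$ outside the tensor factor and use the same counicity identity on $\fraka$ itself (not on $\pg(\fraka)$): $\sum_{\fraka}\pg(\fraka_{(1)})\bt\vg(\fraka_{(2)})1_A = \pg\!\left(\sum_{\fraka}\vg(\fraka_{(2)})\,\fraka_{(1)}\right)\bt 1_A = \pg(\fraka)\bt 1_A$.

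Assembling the three pieces yields
\begin{equation*}
\delg(\pg(\fraka)) \;=\; \pg(\fraka)\bt 1_A + \mu\fraka\bt 1_A + (\id\bt\pg)\delg(\fraka) \;=\; \big(\pg(\fraka)+\mu\fraka\big)\bt 1_A + (\id\bt\pg)\delg(\fraka),
\end{equation*}
which is exactly Eq.~\meqref{eq:dfndeltag1}. There is no real obstacle: the formula is a direct consequence of the defining intertwining property of $\delg$ and the counicity of $\vg$, both of which have already been established. The only point requiring a bit of care is the scalar manipulation in the first summand, where one must use that $\vg$ lands in $\bfk$ so that $\pg$ commutes with multiplication by $\vg(\fraka_{(2)})$ before invoking counicity.
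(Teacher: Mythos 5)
Your proposal is correct and follows essentially the same route as the paper: apply the intertwining relation $\delg\circ\pg=\bar{P}\circ\delg$, expand $\bar{P}$ via Eq.~\meqref{eq:barp}, absorb the scalar $\vg(\fraka_{(2)})$ into the first tensor factor, and invoke the right counicity $(\id\ot\vg)\delg=\beta_r$ to collapse the first two summands to $\pg(\fraka)\bt 1_A$ and $\mu\fraka\bt 1_A$. The scalar manipulation you flag as the delicate point is exactly the step the paper performs when it rewrites $\pg(\fraka_{(1)})\bt\vg(\fraka_{(2)})1_A$ as $\pg(\fraka_{(1)}\vg(\fraka_{(2)}))\bt 1_A$.
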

\begin{proof} By Eq.~(\mref{eq:delg}), we get
\begin{eqnarray*}
\delg\big(\pg(\fraka)\big)&=&\bar{P}(\sum_{\fraka} \fraka_{(1)}\bt\fraka_{(2)})\\
&=&\sum_{\fraka} \pg(\fraka_{(1)})\bt\vg(\fraka_{(2)})1_A+\mu \fraka_{(1)}\bt\vg(\fraka_{(2)})1_A+ \fraka_{(1)}\bt\pg(\fraka_{(2)})\\
&=&\sum_{\fraka} \pg(\fraka_{(1)}\vg(\fraka_{(2)}))\bt1_A+\mu \sum_{\fraka}\fraka_{(1)}\vg(\fraka_{(2)})\bt1_A+\sum_{\fraka}\fraka_{(1)}\bt\pg(\fraka_{(2)})\\
&=&(\pg(\beta_r^{-1}(\id\ot\vg)\delg(\fraka)))\ot1_A+\mu\beta_r^{-1}(\id\ot\vg)\delg(\fraka) \bt1_A+\sum_{\fraka}\fraka_{(1)}\bt\pg(\fraka_{(2)})\\
\hspace{2cm} &=&\Big(\pg(\fraka)+\mu\fraka\Big)\bt 1_A+(\id \bt \pg)\delg(\fraka)\quad(\text{by the counicity of $\delg$}). \hspace{2.5cm}\qedhere
\end{eqnarray*}
\end{proof}

\subsection{The connectedness and the Hopf algebra structure}
\mlabel{subsec:Hopfstructure}
We now enhance the bialgebra structure on a free commutative \wmrb to a Hopf algebra structure. We first recall the following notion of filtered bialgebras (see~\mcite{GG19,KM} and the references therein).
\begin{defn}
 Let $H:=(H,m_H,\mu_H,\Delta_H,\vep_H)$ be a  bialgebra. Then
$H$ is called an (increasing) {\bf filtered bialgebra} if there are $\bfk$-submodules $H_n, n\geq 0$ of $H$ such that
\begin{enumerate}
\item
$H_n\subseteq H_{n+1}$;
\mlabel{it:subset}
\item
$H=\cup_{n\geq 0}H_n$;
\mlabel{it:cup}
\item
$H_pH_q\subseteq H_{p+q}\ \tforall p,q\geq 0$;
\mlabel{it:multip}
\item
$\Delta_H(H_n)\subseteq \bigoplus_{\substack{p+q=n}} H_p\ot H_q \ \tforall p,q,n\geq 0$;
\mlabel{it:copro}
\item
$H_n=\im \mu_H\oplus (H_n\cap \ker\vep_H).$
\mlabel{it:directsum}
\end{enumerate}
A filtered bialgebra $H$ is called {\bf connected} if $H_0=\im\, \mu_H(=\bfk 1_H)$.
\mlabel{defn:confb}
\end{defn}

Then a connected filtered bialgebra is automatically a Hopf algebra as follows. Similar results can be found in\,\mcite{KM,Ma}.

\begin{theorem}\mlabel{thm:hopf}\cite[Theorem~3.4]{GG19} Let $H:=(H,m_H,\mu_H,\Delta_H,\vep_H)$ be a connected filtered bialgebra. Then $H$ is a Hopf algebra.
\end{theorem}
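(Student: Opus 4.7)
The plan is to construct an antipode $S : H \to H$ by recursion along the filtration, and to verify that it is a two-sided convolution inverse of $\id_H$ in the convolution algebra $(\End_\bfk(H),\ast)$ whose identity is $\mu_H\circ\vep_H$. Connectedness together with condition (e) of Definition~\mref{defn:confb} will ensure that the recursion terminates.

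First, I would set up the reduced coproduct $\tilde\Delta(x):=\Delta_H(x)-x\ot 1_H-1_H\ot x$ on $\ker\vep_H$. Using counicity of $\vep_H$ together with conditions (d) and (e), one checks that
\begin{equation*}
\tilde\Delta\bigl(H_n\cap\ker\vep_H\bigr)\ \subseteq\ \bigoplus_{\substack{p+q=n\\ p,q\geq 1}} (H_p\cap\ker\vep_H)\ot(H_q\cap\ker\vep_H).
\end{equation*}
This is the key technical input: every component of $\tilde\Delta(x)$ lies in strictly smaller filtration pieces, which is what makes an inductive construction of $S$ possible.

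Next I would define $S$ inductively. On $H_0=\bfk 1_H$ set $S(1_H)=1_H$. Assuming $S$ has been defined on $H_{n-1}$, for $x\in H_n\cap\ker\vep_H$ write $\tilde\Delta(x)=\sum x'\ot x''$ with $x',x''\in H_{n-1}\cap\ker\vep_H$ and put
\begin{equation*}
S(x)\ :=\ -x-\sum S(x')\,x'',
\end{equation*}
then extend $\bfk$-linearly to $H_n$ via the decomposition $H_n=\bfk 1_H\oplus(H_n\cap\ker\vep_H)$ by $S(c\,1_H+\bar x):=c\,1_H+S(\bar x)$. Condition (b) $H=\bigcup_n H_n$ guarantees this defines $S$ on all of $H$, and the formula is arranged precisely so that $m_H\circ(S\ot\id_H)\circ\Delta_H=\mu_H\circ\vep_H$ holds by construction, giving a left convolution inverse to $\id_H$.

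The antipode axiom also requires the right-sided identity $m_H\circ(\id_H\ot S)\circ\Delta_H=\mu_H\circ\vep_H$. The cleanest route is to run the symmetric recursion $S'(x):=-x-\sum x'\,S'(x'')$ to produce a right convolution inverse of $\id_H$, and then invoke the standard monoid fact that in the convolution monoid $(\End_\bfk(H),\ast,\mu_H\circ\vep_H)$ a left inverse and a right inverse of $\id_H$ must coincide; thus $S=S'$ is two-sided. The main obstacle I expect is the bookkeeping of the induction and the verification that the recursive formula is independent of how $\tilde\Delta(x)$ is written as a sum; the latter follows from $\bfk$-linearity of $S$ on $H_{n-1}$, where it has already been defined. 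Once $S$ is known to be a two-sided convolution inverse of $\id_H$, it is by definition an antipode, so $H$ is a Hopf algebra.
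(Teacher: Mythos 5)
Your argument is correct and is essentially the standard proof; note that the paper does not prove this theorem itself but imports it from \mcite{GG19}, where the antipode is obtained by the same filtration-based recursion (equivalently, as the convolution geometric series $S=\sum_{k\geq 0}(\mu_H\circ\vep_H-\id_H)^{\ast k}$, which terminates on each $H_n$ for exactly the degree-lowering reason you isolate for the reduced coproduct). The only point to tighten is well-definedness: rather than defining $S(x)$ elementwise from a chosen expression $\tilde\Delta(x)=\sum x'\ot x''$, define $S$ on $H_n\cap\ker\vep_H$ directly as the linear map $-\id_H-m_H\circ(S\ot\id_H)\circ\tilde\Delta$ with the inner $S$ the already-constructed map on $H_{n-1}$; this makes independence of the pure-tensor representation automatic and shows the successive definitions agree on $H_{n-1}\cap\ker\vep_H$, after which your left/right-inverse comparison in the convolution monoid finishes the proof.
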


Now we state the main result of this section.

\begin{theorem}
Let $A=\cup_{n\geq 0}A_n$ be a connected filtered bialgebra.  Then the free \wmrb $\sha_e(A)$ on $A$ is also a connected filtered bialgebra (and hence a Hopf algebra by Theorem~\mref{thm:hopf}).
\mlabel{thm:rbhopf}
\end{theorem}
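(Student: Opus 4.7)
The plan is to equip $\sha_e(A)$ with a filtration that mixes the filtration on $A$ with the tensor depth, and then to verify conditions \meqref{it:subset}--\meqref{it:directsum} of Definition~\mref{defn:confb} together with the connectedness $\sha_e(A)_0 = \bfk\cdot 1_A$. Specifically, I set
\begin{equation*}
\sha_e(A)_n := \mathrm{span}_\bfk\bigl\{a_1\ot\cdots\ot a_k \,\bigm|\, k\geq 1,\; a_i\in A_{d_i},\; d_1+\cdots+d_k+(k-1)\leq n\bigr\}.
\end{equation*}
The $+(k-1)$ correction reflects that a pure tensor of depth $k$ is built from $A$ by $k-1$ applications of $\pg$, each of which I want to raise filtration by $1$. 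Conditions \meqref{it:subset} and \meqref{it:cup} are immediate; $\pg(\sha_e(A)_n)\subseteq\sha_e(A)_{n+1}$ holds because $\pg$ prepends $1_A\in A_0$; and connectedness $\sha_e(A)_0=\bfk\cdot 1_A$ is forced by $A_0=\bfk\cdot 1_A$.

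For the multiplicative compatibility \meqref{it:multip}, I induct on $m+n$ to show that for pure tensors $\fraka\in A^{\ot(m+1)}$ and $\frakb\in A^{\ot(n+1)}$ of filtrations $\leq p$ and $\leq q$ respectively, $\fraka\dg\frakb\in\sha_e(A)_{p+q}$. The cases $m=0$ or $n=0$ are handled directly from Eq.~\meqref{eq:dfndg}. In the inductive step $m,n\geq 1$, each of the four summands on the right side of Eq.~\meqref{eq:dfndg} is estimated by combining (i) $A_{d_0}A_{e_0}\subseteq A_{d_0+e_0}$, (ii) the inductive hypothesis applied to $\fraka'\dg(1_A\ot\frakb')$, $(1_A\ot\fraka')\dg\frakb'$, and $\fraka'\dg\frakb'$, and (iii) the fact that $\pg$ raises filtration by $1$; a direct degree count then gives $p+q$.

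For the comultiplicative compatibility \meqref{it:copro}, observe that every pure tensor satisfies $a_0\ot\fraka'=a_0\dg\pg(\fraka')$, so $\sha_e(A)$ is generated as an algebra by $j_A(A)$ and the image of $\pg$. Since $\delg$ is an algebra homomorphism (Lemma~\mref{lem:delg}), I induct on the tensor depth $m$: the base case $\delg\circ j_A=\Delta_A$ preserves filtration by the assumption on $A$, and the inductive step uses the cocycle identity \meqref{eq:dfndeltag1}
\[
\delg\bigl(\pg(\fraka'')\bigr) = \bigl(\pg(\fraka'')+\mu\fraka''\bigr)\bt 1_A + (\id\bt\pg)\delg(\fraka''),
\]
noting that both $\pg$ and $\id\bt\pg$ raise their respective filtrations by $1$ and $1_A\in\sha_e(A)_0$, so the right-hand side lies in $\bigoplus_{p+q\leq n}\sha_e(A)_p\bt\sha_e(A)_q$. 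Condition \meqref{it:directsum} is routine since $\vg(1_A)=\vep_A(1_A)=1$ gives $\bfk\cdot 1_A\cap\ker\vg=0$, and any $x\in\sha_e(A)_n$ decomposes as $x=\vg(x)\cdot 1_A+(x-\vg(x)\cdot 1_A)$ with the second summand in $\sha_e(A)_n\cap\ker\vg$ because $1_A\in\sha_e(A)_0$. The main technical obstacle is the bookkeeping for \meqref{it:multip}, where the recursion \meqref{eq:dfndg} mixes terms of several different tensor depths and the filtration count must simultaneously track the depth, the slot filtrations of $A$, and the $+(k-1)$ correction---delicate but routine once set up.
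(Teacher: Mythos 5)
Your proposal is correct and follows essentially the same route as the paper: the filtration $d_1+\cdots+d_k+(k-1)$ on pure tensors is exactly the paper's $\deg(\fraka)=\sum_i\deg(a_i)+m$, condition \meqref{it:multip} is verified by the same induction on $m+n$ through the recursion \meqref{eq:dfndg}, and condition \meqref{it:copro} is handled via the same decomposition $a_0\ot\fraka'=a_0\dg\pg(\fraka')$ together with the cocycle identity \meqref{eq:dfndeltag1}. The only cosmetic difference is that you induct on tensor depth rather than on the filtration degree $n$ in the coproduct step, which changes nothing substantive.
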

\begin{proof}
For $0\ne a\in A$, we define the \emph{degree} of $a$ by
\begin{equation*}
\deg(a):=\min\{k\,|\,a\in A_k,k\geq 0\}.
\end{equation*}
Also for $a,b\in A$, by Definition~\mref{defn:confb}~\meqref{it:multip}, we have $ab\in A_{\deg(a)+\deg(b)}$.  Thus
\begin{equation}\mlabel{eq:abs}
	\deg(ab)\leq\deg(a)+\deg(b).
\end{equation}
By the connectedness of $A$, we get $A_0=\bfk 1_A$. Thus $\deg(1_A)=0$. Let
$$\fs:=\sha_e(A)=\bigoplus_{n\geq 1} A^{\ot n}.$$
For $0\ne \fraka=a_0\ot a_1\ot \dots\ot a_m\in A^{\ot (m+1)}\subseteq \fs$, we define
\begin{equation}
\mlabel{eq:defndeg}
\deg(\fraka):=\deg(a_0)+\deg(a_1)+\dots+\deg(a_m)+m.
\end{equation}
Denote by $\fs_k$  the  submodule generated by the set $\{\fraka\in \fs|\deg(\fraka)\le k\}$.
Then $\fs_0=A_0=\bfk 1_A$.  For $\fraka=a_0\ot\fraka'$ with $\fraka'\in A^{\ot m}$, we have
\begin{equation}
\mlabel{eq:degprop}
\deg(\fraka)=\deg(a_0)+\deg(\fraka')+1, \fraka'\in\fs_n\Longrightarrow  \fraka\in \fs_{\deg(a_0)+n+1}.
\end{equation}

We now prove that $\fs$ is a connected filtered bialgebra. The proof is divided into five parts, verifying the five conditions in Definition ~\mref{defn:confb}.
\smallskip
\item(a)
For any $\fraka\in\fs_n$, we have $\deg(\fraka)\leq n$.  Then $\fraka\in \fs_{n+1}$, proving $\fs_n\subseteq \fs_{n+1}$.
\smallskip
\item(b)
By the definition of $\fs_n$, we obtain $\fs=\cup_{n\geq 0}\fs_n$.
\smallskip
\item(c)
For any pure tensors $\fraka\in \fs_p, \frakb\in \fs_q$, we will prove $\fraka\dg \frakb\in \fs_{p+q}$ by induction on $s:=p+q$.
When $s=p+q=0$, then $p=q=0$ and so $\fraka, \frakb\in \fs_0=\bfk 1_A$. Thus
$$\fraka\dg\frakb=\fraka\frakb\in \bfk1=\fs_0=\fs_{p+q}.$$
If $p=0$ or $q=0$,  then $\fraka\in \fs_0$ or $\frakb\in \fs_0$. Thus $\fraka\dg\frakb=\fraka\frakb\in \fs_{p}$ or $\fs_q$ since $\fs_0=\bfk1_A$, and so $\fraka\dg\frakb\in\fs_{p+q}$.
Let $p,q\geq 1$. Write $\fraka=a_0\ot\fraka'$ with $\fraka'\in A^{\ot m}$ and $\frakb=b_0\ot\frakb'$ with $\frakb'\in A^{\ot n}$. For a given $k\geq 2$, assume $\fraka\dg \frakb\in \fs_{p+q}$ holds for $s=p+q\le k$ and now consider the case when $s=p+q=k+1\geq 3$. By Eq.~(\mref{eq:dfndg}), we have
\begin{equation}\mlabel{eq:pqs}
\fraka\dg\frakb=a_0b_0\ot\big(\fraka'\dg(1_A\ot\frakb')\big)+a_0b_0\ot\big((1_A\ot\fraka')\dg\frakb'\big)+\lambda a_0b_0\ot(\fraka'\dg\frakb')+\kappa a_0b_0(\fraka'\dg\frakb').
\end{equation}
Consider the first term on the right hand side of Eq.~(\mref{eq:pqs}). By Eq.~(\mref{eq:degprop}), we have
$$\deg(\fraka')=\deg(\fraka)-\deg(a_0)-1\le {p-\deg(a_0)-1}$$
and
$$\deg(\frakb')=\deg(\frakb)-\deg(b_0)-1\le {q-\deg(b_0)-1}.$$
So $\fraka'\in \fs_{p-\deg(a_0)-1},  \frakb'\in \fs_{q-\deg(b_0)-1}$ and $1_A\ot\frakb'\in \fs_{q-\deg(b_0)}$.
By the induction hypothesis and Eq.~(\mref{eq:degprop}), we obtain
$\fraka'\dg(1_A\ot\frakb')\in \fs_{p+q-\deg(a_0)-\deg(b_0)-1}$. Then by Eqs.~(\mref{eq:abs}) and~(\mref{eq:degprop}), we obtain
$$a_0b_0\ot\big(\fraka'\dg(1_A\ot\frakb')\big)\in \fs_{p+q}.$$
Applying the same method, we can show that $a_0b_0\ot\big((1_A\ot\fraka')\dg\frakb'\big), a_0b_0\ot(\fraka'\dg\frakb') \in \fs_{p+q}$.
We now consider the fourth term on the right hand side of Eq.~(\mref{eq:pqs}). Since
$a_0b_0(\fraka'\dg \frakb')=(a_0b_0\fraka')\dg \frakb'$ and $\deg(a_0b_0\fraka')\leq \deg(a_0)+\deg(b_0)+\deg(\fraka')$ by Eq.~\meqref{eq:abs},
we have $a_0b_0\fraka'\in \fs_{p+\deg(b_0)-1}$, and so $\deg(a_0b_0\fraka')+\deg(\frakb')\leq p+q-2$, giving $a_0b_0(\fraka'\dg\frakb')\in \fs_{p+q}$ by the induction hypothesis.
Thus $\fraka \dg \frakb \in \fs_{p+q}$, proving $ \fs_p\fs_q\subseteq \fs_{p+q}$.
\smallskip

\item(d)
To prove $\delg(\fs_n)\subseteq \bigoplus_{\substack{p+q=n}} \fs_p\bt \fs_q \ \tforall n\geq 0$, we use the induction on $n\geq 0$.  When $n=0$, then $\fs_0=\bfk 1_A$. Since $\delg(1_A)=\da(1_A)=1_A\ot 1_A$, we have $\delg(\fs_0)\subseteq \fs_0\bt\fs_0$.
For a given $n\geq 0$, assume $\delg(\fs_\ell)\subseteq \bigoplus_{\substack{p+q=\ell}} \fs_p\bt \fs_q$ holds for all $\ell\leq n$.  Consider $\fraka \in \fs_{n+1}$. Write $\fraka=a_0\ot\fraka'\in A^{\ot (k+1)}$ with $k\geq 0$. If $k=0$, then $\fraka=a_0\in A_{n+1}$. Then
\vsb
 $$\delg(\fraka)=\da(a_0)\subseteq\bigoplus_{p+q=n+1}A_{p}\bt A_{q}\subseteq \bigoplus_{p+q=n+1} \fs_p\bt\fs_q.
 \vsb
 $$
If $k\geq 1$, then
\vsc
\begin{align*}
\delg(\fraka)={}& \delg\big(a_0\dg \pg(\fraka')\big)\\
={}& \da(a_0)\bug\delg\big(\pg(\fraka')\big)\\
={}& \da(a_0)\bug\Big(\pg(\fraka')\bt1_A+(\id\bt\pg)\delg(\fraka')+\mu\fraka'\bt1_A\Big).
\end{align*}
By $\deg(\fraka')= \deg(\fraka)-\deg(a_0)-1\leq n-\deg(a_0)$ and the induction hypothesis, we have
\begin{equation*}
\delg(\fraka') \in \bigoplus_{\substack{s+t=\deg(\fraka')}} \fs_s \bt \fs_t.
\end{equation*}
By Eq.~(\mref{eq:degprop}), we have $\deg(1_A\ot\frakb)=\deg(\frakb\ot 1_A)=\deg(\frakb)+1$ for all $\frakb\in A^{\ot k}$. Thus
\begin{equation*}
\deg(\pg(\frakb))=\deg(\frakb)+1.
\end{equation*}
Then
\vsb
$$(\id\bt\pg)\delg(\fraka')\in  \bigoplus_{\substack{s+t=\deg(\fraka')}} \fs_s \bt \pg\big(\fs_t\big)
\subseteq \bigoplus_{\substack{s+t=\deg(\fraka')}} \fs_s \bt \fs_{t+1}
\subseteq \bigoplus_{\substack{p_2+q_2=\deg(P(\fraka'))}}\fs_{p_2}\bt\fs_{q_2}
$$
by setting $p_2:=s$ and $q_2:=t+1$.
So we have
$$\pg(\fraka')\bt1_A+(\id\bt\pg)\delg(\fraka')+\mu\fraka'\bt1_A\in \bigoplus_{\substack{p_2+q_2=\deg(\pg(\fraka'))}} \fs_{p_2}\bt \fs_{q_2}.$$
Thus we get
\vsb
\begin{align*}
\delg(\fraka)&=\da(a_0)\bug\big(\pg(\fraka')\bt1_A+(\id\bt\pg)\delg(\fraka')+\mu\fraka'\bt1_A\big)\\
&\in\bigoplus_{\substack{p_1+q_1=\deg(a_0)}}\fs_{p_1}\bt\fs_{q_1}\,\,\bug\bigoplus_{\substack{p_2+q_2=\deg(\pg(\fraka'))}}\fs_{p_2}\bt \fs_{q_2}\\
&\subseteq{} \bigoplus_{\substack{p_1+q_1=\deg(a_0)\\p_2+q_2=\deg(\pg(\fraka'))}} \Big(\fs_{p_1}\dg \fs_{p_2}\Big) \bt \Big(\fs_{q_1}\dg\fs_{q_2}\Big)\\
&\subseteq \bigoplus_{\substack{p_1+q_1+p_2+q_2=\deg(\fraka)}} \fs_{p_1+p_2} \bt \fs_{q_1+q_2}\quad(\text{by Eq.~(\mref{eq:degprop})})\\
 &\subseteq \bigoplus_{\substack{p+q=n+1}}\fs_p \bt \fs_q \quad(\text{by setting $p:=p_1+p_2$ and $q:=q_1+q_2$}).
\end{align*}
This completes the induction and thus proves that $\delg(\fs_n)\subseteq  \bigoplus_{\substack{p+q=n}} \fs_p \bt \fs_q$.
\smallskip

\item(e)
Let $e=\mg \vg$. Then by $\vg(1_A)=1_\bfk$ and $\mg(1_\bfk)=1_A$,  we know that $e(1_A)=1_A$ and $e(\fraka)=0$ for every pure tensor $\fraka \in \ker\vg$. This gives $\vg\mg=\id_\bfk$, proving that $e^2=e$, $\mg$ is injective and $\vg$ is surjective. Thus
\vsb
\begin{equation}
\fs=\im e \oplus \ker e=\im \mg \oplus \ker \vg.
\mlabel{eq:fsvg}
\end{equation}
We now prove that $\fs_n=\im \mg \oplus \big(\fs_n \cap \ker\vg\big)$ for $n\geq 0$. When $n=0$, by Eq.~(\mref{eq:fsvg}) and $\fs_0=\bfk 1_A=\im\mg$, we have
\vsb
$$\im \mg\oplus\big(\fs_0 \cap \ker\vg\big)=\im \mg \oplus\big(\im \mg \cap \ker\vg\big)=\im \mg=\fs_0.$$
When $n\geq 1$, we get $\deg(\fraka)\le n$ for all $\fraka \in \fs_n$. Write $\fraka=\vg(\fraka) 1_A+\fraka-\vg(\fraka) 1_A$. Then
$$\vg\big(\fraka-\vg(\fraka)1_A\big)=\vg(\fraka)-\vg\big(\vg(\fraka)1_A\big)=\vg(\fraka)-\vg(\fraka)\vg(1_A)=\vg(\fraka)-\vg(\fraka)=0.$$
Hence $\fraka-\vg(\fraka)1_A \in \ker\vg$. Since $\fs_0 \subseteq \fs_n$ by Item (\mref{it:subset}), we have $\fraka-\vg(\fraka) 1_A\in \fs_n$. So $\fraka-\vg(\fraka)1_A \in \fs_n\cap \ker\vg$ and then $\fs_n=\im \mg + \big(\fs_n \cap \ker\vg\big)$.
Since
$$\big(\fs_n \cap \ker\vg\big)\cap \im \mg=\fs_n \cap \big(\ker\vg \cap \im \mg\big)=\fs_n \cap 0=0,$$
 we get $\fs_n=\im \mg \oplus \big(\fs_n \cap \ker\vg\big), n\geq 1$.
Thus $$\fs_n=\im \mg \oplus \big(\fs_n \cap \ker\vg\big), n\geq 0.$$
Then by Definition~\mref{defn:confb}, $\fs=\sha_e(A) $ is a connected filtered bialgebra.
\end{proof}

We end the paper by revisiting the special case of $\sha_e(A)$ when $A$ takes the base ring $\bfk$ and make explicit the coproduct on this Hopf algebra. Recall that
\begin{equation*}
\sha_e(\bfk)=\bigoplus_{\substack{n\geq 0}} \bfk^{\ot (n+1)}=\bigoplus_{\substack{n\geq 0}} \bfk\bfone^{\ot (n+1)}.
\end{equation*}
Note that $\bfk$ is a connected  filtered $\bfk$-bialgebra with the coproduct and counit given by
$$\de_\bfk:\bfk\to \bfk\ot \bfk,\ \ x\mapsto x\ot \bfone ; \quad \ve_\bfk:=\id_\bfk:\bfk \to \bfk,\ \ x\mapsto x ,\ \ \quad x \in \bfk,$$
and the filtration given by $\bfk_0=\bfk$ and $\bfk_n=\bfk, n\geq 1$.
Then the coproduct $\delg$ on $\sha_e(\bfk)$ becomes the recursion
\vsb
\begin{equation*}
\delg:\sha_e(\bfk)\to \sha_e(\bfk)\bt\sha_e(\bfk),\quad
  \bfone^{\ot (k+1)}\mapsto \left\{\begin{array}{lll}\bfone\bt \bfone,\quad &k=0, \\
\pg(\bfone^{\ot k})\bt \bfone+(\id\bt \pg)\delg(\bfone^{\ot k})+\mu\bfone^{\ot k}\bt \bfone,&k\geq 1,\end{array}\right.
\end{equation*}
and the counit $\vg$ becomes
\vsb
$$\vg:\sha_e(\bfk)\to \bfk,\ \ \bfone^{\ot (k+1)}\mapsto (-\mu)^k\bfone, \ \ k\geq 0.$$

\begin{prop}With the above notations, we have
\vsb
\begin{equation}
\delg(\bfone^{\ot (k+1)})=\sum_{i=0}^{k}\bfone^{\ot (i+1)}\bt\bfone^{\ot (k+1-i)}+\mu\sum_{i=0}^{k-1}\bfone^{\ot (i+1)}\bt\bfone^{\ot (k-i)},\quad\, k\geq 1.\mlabel{eq:dgk}
\end{equation}
\end{prop}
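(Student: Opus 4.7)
The plan is to prove the formula by induction on $k\geq 1$, using the recursive description of $\delg$ on $\sha_e(\bfk)$ that immediately precedes the statement, namely
\[
\delg(\bfone^{\ot (k+1)})=\pg(\bfone^{\ot k})\bt \bfone+(\id\bt \pg)\delg(\bfone^{\ot k})+\mu\bfone^{\ot k}\bt \bfone,\qquad k\geq 1,
\]
together with the fact that $\pg(\bfone^{\ot j})=\bfone^{\ot(j+1)}$ by the very definition of $\pg$ on $\sha_e(\bfk)$.

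For the base case $k=1$, I would simply unpack the recursion: the first summand gives $\bfone^{\ot 2}\bt\bfone$, the second summand gives $(\id\bt\pg)(\bfone\bt\bfone)=\bfone\bt\bfone^{\ot 2}$, and the third gives $\mu\bfone\bt\bfone$; this matches the right-hand side of \meqref{eq:dgk} for $k=1$.

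For the inductive step, assuming the formula for $k$, I would substitute the induction hypothesis into the recursion to get
\[
\delg(\bfone^{\ot(k+2)})=\bfone^{\ot(k+1)}\bt\bfone+(\id\bt\pg)\!\left(\sum_{i=0}^{k}\bfone^{\ot(i+1)}\bt\bfone^{\ot(k+1-i)}+\mu\sum_{i=0}^{k-1}\bfone^{\ot(i+1)}\bt\bfone^{\ot(k-i)}\right)+\mu\bfone^{\ot(k+1)}\bt\bfone.
\]
Since $(\id\bt\pg)(\bfone^{\ot(i+1)}\bt\bfone^{\ot(k+1-i)})=\bfone^{\ot(i+1)}\bt\bfone^{\ot(k+2-i)}$, the first inner sum becomes $\sum_{i=0}^{k}\bfone^{\ot(i+1)}\bt\bfone^{\ot(k+2-i)}$, and combined with the leading term $\bfone^{\ot(k+1)}\bt\bfone$ (which is the $i=k+1$ case) it produces $\sum_{i=0}^{k+1}\bfone^{\ot(i+1)}\bt\bfone^{\ot(k+2-i)}$, the first sum in \meqref{eq:dgk} with $k$ replaced by $k+1$. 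Similarly, $(\id\bt\pg)$ turns the second sum into $\sum_{i=0}^{k-1}\bfone^{\ot(i+1)}\bt\bfone^{\ot(k+1-i)}$, and adjoining the trailing $\bfone^{\ot(k+1)}\bt\bfone$ (the $i=k$ term) yields $\sum_{i=0}^{k}\bfone^{\ot(i+1)}\bt\bfone^{\ot(k+1-i)}$, which is exactly the $\mu$-sum in \meqref{eq:dgk} at level $k+1$.

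There is no real obstacle here; the only point requiring mild attention is the bookkeeping of the index shifts and making sure that the newly generated boundary terms ($\bfone^{\ot(k+1)}\bt\bfone$ from the first recursive summand and $\mu\bfone^{\ot(k+1)}\bt\bfone$ from the third) line up correctly as the top-index contributions of the two sums. Once this bookkeeping is done, the induction closes.
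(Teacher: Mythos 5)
Your proof is correct and follows essentially the same route as the paper's: induction on $k$, unpacking the recursion for the base case $k=1$, then substituting the inductive hypothesis into the recursion, letting $(\id\bt\pg)$ shift the second tensor factor, and absorbing the two boundary terms as the top-index contributions of the two sums. The only slip is a transcription typo: the leading term produced by the recursion at level $k+2$ is $\pg(\bfone^{\ot(k+1)})\bt\bfone=\bfone^{\ot(k+2)}\bt\bfone$ rather than $\bfone^{\ot(k+1)}\bt\bfone$, which is precisely the $i=k+1$ term of the first sum, so the bookkeeping closes exactly as you describe.
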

\begin{proof}
With $\delg(\bfone^{\ot \bfone})=\bfone\bt \bfone$ as required, we prove Eq.~(\mref{eq:dgk}) by induction on $k\geq 1$.  When $k=1$, we have
\begin{align*}
\delg(\bfone^{\ot 2})={}& \pg(\bfone)\bt \bfone+(\id\bt \pg)\delg(\bfone)+\mu\bfone\bt \bfone\\
={}& \bfone^{\ot 2}\bt\bfone+(\id\bt \pg)(\bfone\bt \bfone)+\mu\bfone\bt \bfone\\
={}& \bfone^{\ot 2}\bt\bfone+\bfone\bt \bfone^{\ot 2}+\mu\bfone\bt \bfone\\
={}& \sum_{i=0}^{1}\bfone^{\ot (i+1)}\bt\bfone^{\ot (1+1-i)}+\mu\sum_{i=0}^{0}\bfone^{\ot (i+1)}\bt\bfone^{\ot (1-i)}.
\end{align*}
Assume that Eq.~(\mref{eq:dgk}) holds for $k\geq 1$ and consider the case for $k+1$. Then
\begin{align*}
&\delg(\bfone^{\ot (k+2)})\\
={}& \pg(\bfone^{\ot (k+1)})\bt \bfone+(\id\bt\pg)\delg(\bfone^{\ot (k+1)})+\mu\bfone^{\ot (k+1)}\bt \bfone\\
={}& \bfone^{\ot (k+2)}\bt \bfone+(\id\bt\pg)\Big(\sum_{i=0}^{k}\bfone^{\ot (i+1)}\bt\bfone^{\ot (k+1-i)}+\mu\sum_{i=0}^{k-1}\bfone^{\ot (i+1)}\bt\bfone^{\ot (k-i)}\Big)+\mu\bfone^{\ot (k+1)}\bt \bfone\\
={}& \bfone^{\ot (k+2)}\bt \bfone+\sum_{i=0}^{k}\bfone^{\ot (i+1)}\bt\bfone^{\ot (k+2-i)}+\mu\sum_{i=0}^{k-1}\bfone^{\ot (i+1)}\bt\bfone^{\ot (k+1-i)}+\mu\bfone^{\ot (k+1)}\bt \bfone\\
={}& \sum_{i=0}^{k+1}\bfone^{\ot (i+1)}\bt\bfone^{\ot (k+2-i)}+\mu\sum_{i=0}^{k}\bfone^{\ot (i+1)}\bt\bfone^{\ot (k+1-i)}.
\end{align*}
This completes the induction and hence the proof of Eq.~(\mref{eq:dgk}).
\end{proof}
\smallskip

\noindent {\bf Acknowledgements}: This work is supported by the National Natural Science Foundation of
China (11961031, 12326324) and Jiangxi Provincial Natural Science Foundation (20224BAB201003).

\noindent
{\bf Declaration of interests. } The authors have no conflicts of interest to disclose.

\noindent
{\bf Data availability. } Data sharing is not applicable as no new data were created or analyzed.

\end{document}